\documentclass[preprint,1p]{elsarticle}
\usepackage{algorithm}
\usepackage{algpseudocode}
\usepackage{algorithmicx}
\usepackage{natbib}
\usepackage{amsfonts,latexsym,amsmath, amssymb,amsthm}
\usepackage{mathrsfs}
\usepackage{bm}
\usepackage{color}
\usepackage{chngcntr}
\usepackage{apptools}
\usepackage{clipboard}
\newclipboard{myclipboard_main}
\usepackage[utf8]{inputenc}
\usepackage{graphicx,enumerate}
\usepackage{float} 
\usepackage{subfigure}
\usepackage{enumitem}
\usepackage{cleveref}
\makeatletter
\newenvironment{breakablealgorithm}
{
	\begin{center}
		\refstepcounter{algorithm}
		\hrule height.8pt depth0pt \kern2pt
		\renewcommand{\caption}[2][\relax]{
			{\raggedright\textbf{\ALG@name~\thealgorithm} ##2\par}%
			\ifx\relax##1\relax 
			\addcontentsline{loa}{algorithm}{\protect\numberline{\thealgorithm}##2}%
			\else 
			\addcontentsline{loa}{algorithm}{\protect\numberline{\thealgorithm}##1}%
			\fi
			\kern2pt\hrule\kern2pt
		}
	}{
		\kern2pt\hrule\relax
	\end{center}
}
\makeatother

\newcommand{\e}{\varepsilon}

\def\beaa{\begin{eqnarray*}}
\def\eeaa{\end{eqnarray*}}
\def\ba{\begin{array}}
\def\ea{\end{array}}
\def\be#1{\begin{equation} \label{#1}}
\def \eeq{\end{equation}}

\def\be{{\beta}}

\def\pr{{\partial}}

\def\s{{\sigma}}

\def\R{{\mathbb{R}}}

\def\sgH2{\sigma_H^2}
\def\sgL2{\sigma_L^2}

\newtheorem{theorem}{Theorem}[section]
\newtheorem{lemma}[theorem]{Lemma}
\newtheorem{proposition}[theorem]{Proposition}
\newtheorem{cor}[theorem]{Corollary}

\newtheorem{remark}[theorem]{Remark}

\newtheorem{assumption}[theorem]{Assumption}
\newproof{pf}{proof}
\newcommand\ben{\begin{equation}}
\newcommand\een{\end{equation}}
\newcommand\bea{\begin{eqnarray}}
\newcommand\eea{\end{eqnarray}}

\setlength{\textheight}{20,8cm}
\setlength{\textwidth}{16cm} \setlength{\oddsidemargin}{0cm}
\setlength{\evensidemargin}{0cm}
\numberwithin{equation}{section}

\newcommand{\dyc}[1]{{\color{blue}#1}}

\numberwithin{equation}{section}

\allowdisplaybreaks
\begin{document}
\begin{frontmatter}
\title{Double free boundary problem for defaultable corporate bond with credit rating migration risks and their asymptotic behaviors}
\author[1]{Yuchao Dong\fnref{fn1}}
\ead{ycdong@tongji.edu.cn}
\author[1]{Jin Liang\corref{cor1}\fnref{fn2}}
\ead{liang\_jing@tongji.edu.cn}
\author[1,2]{Claude-Michel Brauner\fnref{fn2}}
\ead{claude-michel.brauner@u-bordeaux.fr}
\cortext[cor1]{Corresponding author}
\affiliation[1]{organization={School of Mathematical Sciences, Tongji University},
	city={Shanghai 200092},
	country={China}}
\affiliation[2]{organization={Institut de Math\unexpanded{\'e}matiques de Bordeaux, Universit\unexpanded{\'e} de Bordeaux},
	city={33405 Talence},
	country={France}}
\fntext[fn1]{Yuchao Dong acknowledges partial support from  National Natural Science Foundation of China (No. 12071333 \& No. 12101458)}
\fntext[fn2]{Jin Liang and Claude-Michel Brauner acknowledge partial support from  National Natural Science Foundation of China (No. 12071349)}
\begin{keyword}
	Traveling wave; Free boundary problem; PDE with discontinuous leading order coefficient; Asymptotic behavior; Credit rating migration risk model
\end{keyword}

\begin{abstract} 
In this work, a pricing model for a defaultable corporate bond with credit rating migration risk is established. The model turns out to be a free boundary problem with two free boundaries. The latter are the level sets of the solution but of different kinds. One is from the discontinuous second order term, the other from the obstacle. Existence, uniqueness, and regularity of the solution are obtained.  We also prove that  two free boundaries are  $C^\infty$. The asymptotic behavior of the solution is also considered: we show that it converges to a traveling wave solution when time goes to infinity. Moreover, numerical results are presented. 
\end{abstract}
\end{frontmatter}

\section{Introduction}
 
 Due to the globalization and complexity of financial markets, the credit risks become more and more important and an unstable factor  impacting the market, which might cause a crucial crisis. For example, in the 2008 financial tsunami and the
 2010 European debt crisis, credit rating migration risk played a key role. The first step to managing the risks is modeling and measuring them. Thus, it has attracted
  more and more attention both in academics and in industry to understand these risks, especially default risk and credit rating migration risk.

  Most credit risk research falls into two kinds of framework, namely structure model and intensity one. The intensity model assumes that the risk is due to some  exogenous factors, which are usually modeled by  Markov chains, see \cite{Ma}. In this way, the default and/or migration times are determined by an exogenous transition intensity; see Jarrow, Lando, and Turnbull \cite {JT,JLT}, Duffie and Singleton \cite{DS}, to mention a few. In the  implementation, intensity transition matrices  are usually  obtained from historical statistical data. However, it is well-known that  companies' current financial status plays a crucial role in default and credit rating migrations. For example, the main reason which caused the 2010 European debt crisis was that the
   sovereign debts of several European countries reached an unsustainable level due to their poor economical situation. The crisis happened in these countries because of the downgrading of their credit ratings and the subsequent chain reactions. Therefore, Markov chain model alone cannot fully capture the credit risks.

   To include the endogenous factor, the structural model comes into consideration for credit risk modeling, which could be traced back to Merton \cite{M} in 1974. In such a kind of  models, the reason for credit rating migration and default is  related to the firm's asset value and its obligation. For example,  in Merton's model, it is assumed that the company's  asset value follows a geometric Brownian motion and a default would happen if the asset value drops below the debt at maturity. Thus, the corporate bond, representing the company's obligation, is a contingent claim of the asset value.
     Later, Black and Cox \cite{BC} extended Merton's  model to the so-called first passage-time model, where the
       default would happen whenever the asset value reached a given boundary; see also \cite{Le,LS,LT,BV,TF} for related works.
       Dai et al.  \cite{DHK} considered an optimal control problem in the case where a bank's asset is opaque.

            Using the structural model, Liang and Zeng \cite{LZ}  studied the pricing problem of the corporate bond with credit rating migration risk, where a predetermined migration threshold is given to
            divide asset value into high and low rating regions, in which the asset value follows different stochastic processes. Hu, Liang, and Wu \cite{HLW} further developed this model, where the migration boundary is a free boundary governed by a ratio of the firm's asset value and debt. Some theoretical results and traveling wave properties are also obtained in \cite{LWH16}. Li, Zhang, and Hu \cite{li2018convergence} studied the numerical method for solving related variational inequality. Later, Fu, Chen, and Liang \cite{FCL} provided more  mathematical analysis and detailed description of the free migration boundary.  More extension of this model is considered in \cite{LYCW, YLW, WL, WLH}. Recently, Chen and Liang \cite{CL} also considered the case where upgrade and downgrade boundaries are different. The readers can also refer to the survey paper \cite{CHLY} for a summary.  

However, the reason behind the credit rating migration is the default possibility; hence, it is natural to consider a model involving both the credit rating migration and default risks. In \cite{WLH}, as the first step,  a predetermined default boundary of asset level is considered. In this paper, we will let the default boundary also depend on the ratio between the stock price and bond value. Therefore, the model will contain two free boundaries. Both of these boundaries are the level sets of the solution but of different types. One is from discontinuous leading second order term as in previous credit rating migration works (for example, see \cite{LWH16}); the other is from a more traditional free boundary problem, i.e. obstacle problem.  Using PDE techniques, existence, uniqueness, regularity, and asymptotic behavior of the solution are obtained, which from a theoretical perspective insure the rationality of the model. Numerical results support our theoretical approach. The stability of traveling wave equation will be studied in our future work \cite{BDLL22} using the techniques of \cite{ABLZ20,BHL00,BLZ20}.

 This paper is organized as follows. In Section 2, the model is established and the pricing problem is reduced to a system of two parabolic PDEs with two free boundaries. In Section 3, for the sake of both uniform estimates and asymptotic behavior, we consider a traveling wave solution to the original problem. In Section 4, we use a penalization method and simultaneously a regularization of the coefficient of the 2nd order term to approximate the free boundary problem by a smooth Cauchy problem depending on a small parameter $\e>0$. A series of lemmas are proved in order to establish estimates which are independent of $\e$. The key point is that  the two approximating free boundaries can be separated by a positive distance independent of $\e$. In Section 5, the main results are stated, including the existence, uniqueness, and regularity of the solution. In particular, we prove that  two free boundaries are  $C^\infty$. The asymptotic behavior of the solution as time tends to infinity is examined in Section 6. Finally, a numerical method and some computational results are presented in Section 7.

\section{The Model}\label{Model}
\subsection{Assumptions}
 
Let $(\Omega,\mathcal{F},P)$ be a complete probability space. We
assume that the firm issues a corporate bond, which is a
contingent claim of its value. The stock price of the firm admits different dynamics for different credit ratings.
\begin{assumption}[the firm asset with credit rating migration]
	Let $S_t$ denote the firm's value in the risk neutral world. It
	satisfies
	\begin{equation*}
		dS_t=\left\{\begin{array}{l}rS_tdt+\sigma_HS_tdW_t,\quad \mbox{ in high rating region,}\\
			rS_tdt+\sigma_LS_tdW_t,\quad \mbox{ in low
				rating region,}\end{array}\right.
	\end{equation*}
	where $r$ is the risk free interest rate, which is positive constant, and
\begin{equation}\label{sigmas}
\sigma_{H}<\sigma_{L}
\end{equation}
represent volatilities
	of the firm under the high and low credit grades respectively.
	They are also assumed to be positive constants. $W_t$ is the Brownian
	motion which generates the filtration $\{{\mathcal F}_t\}$.
	\end{assumption}
It is reasonable to assume \eqref{sigmas}, namely that the
volatility in high rating region is lower than the one in the low
rating region. 	The firm issues only one zero coupon corporate bond with face
value $F$. Let $\Phi_t$ denote the discount value of the bond at time
$t$. Therefore, at the
maturity time $T$, an investor can get $\Phi_T=\min\{S_T,F\}$. For simplicity, we assume in the following sections $F=1$. The rating criterion is based on the ratio between the stock price and liability.
\begin{assumption}[the credit rating migration time]
	High and low rating regions are determined by the  proportion between the debt and asset value. The credit rating migration time $\tau_1 $ and $\tau_2 $ are the
	first moments when the firm's grade is downgraded and upgraded
	respectively as follows:
	\begin{eqnarray*}
		&&\tau_1=\inf\{t>0|\Phi_0/S_0<\gamma e^{-\delta T}, \Phi_t/S_t\geqslant \gamma e^{-\delta (T-t)}\},\\
		&& \tau_2=\inf\{t>0|\Phi_0/S_0>\gamma e^{-\delta T}, \Phi_t/S_t\leqslant \gamma e^{-\delta (T-t)}\},
	\end{eqnarray*}
	where $\Phi_t=\Phi_t(S_t,t)$ is a contingent claim with respect to
	$S_t$ and 
	\begin{equation}\label{gamma}
		0<\gamma<1
	\end{equation} 
	is a positive
	constant representing  the threshold proportion of the debt and
	value of the firm's rating. Also
	$$ \delta >0,$$
	is the so-called credit discount rate.
{\color{black}
 In this paper, we also make the assumption that
 \begin{equation}\label{main_sigma}
 	\frac{1}{2}\sigma^2_H <\delta< \frac{1}{2}\sigma^2_L.
 \end{equation}
}
\end{assumption}
Further, we assume that the bond will default when the stock price is too low, compared with the debt.
\begin{assumption}[the defaultable corporate bond]
 The default time is also determined by the proportion of the debt and asset value. Here, we assume that the default happens whenever
 $$
 S_te^{-\delta(T-t)} \leqslant \Phi_t.
 $$
	The default time is defined as 
	$$\tau =\inf\{t>0|\Phi_0>e^{-\delta T}S_0, \Phi_t\geqslant e^{-\delta(T-t)}S_t\}.$$
At the default time, the contract is closed and the investor obtains the cash $e^{-\delta(T-t)}S_t$.
\end{assumption}
\begin{remark}
Condition \eqref{main_sigma} is also assumed in \cite{LWH16} to ensure the existence of the travelling wave equation. In finance, if $\delta$ is too small or too large, it is possible that the company will always be low rating or high rating. To see this, assume that the stock price is 
$$
S_t=e^{rt-\frac12\int_0^t \sigma^2(u)du+\int_0^t \sigma(u)dW_u},
$$
where $\sigma(s)$ is the volatility of the stock taking values in $\{\sigma_H,\sigma_L\}$ depending on whether the stock is low rating or high rating. The present value of the bond is $e^{-r(T-t)}$. Then, the company's  discounted debt-to-asset ratio is 
$$
e^{-\delta t}\frac{e^{-r(T-t)}}{S_t}=e^{-rT}e^{\int_0^t (\frac12\sigma^2(u)-\delta) du- \int_0^t \sigma(u)dW_u}.
$$
If $\delta<\frac{1}{2}\sigma_H^2$, the right hand side will go to $\infty$ as $t \rightarrow \infty$ with probability $1$. This implies that the company will always be low rating in the end.  On the other hand, if $\delta>\frac12\sigma^2_L$, the right hand side will go to $0$ and, hence, the company will always be high rating.

\end{remark}
\subsection{The Cash Flow}  If the bond does not default, once the credit rating migrates before
the maturity $T$, a virtual substitute termination happens, i.e.,
the bond is virtually terminated and substituted by a new one with
a new credit rating. There is a virtual cash flow of the bond.
We denote by {\color{black}  $\Phi_H(S,t)$ and $\Phi_L(S,t)$} the values of the bond
in high and low grades respectively, which are functions of
{\color{black} $S$} and $t$. Then, they are conditional expectations of the following
\begin{align}\label{highholding}
	\nonumber \Phi_H(S,t)=&E\Big[e^{-r(T-t)} \min(S_{T},F)\cdot {\bf 1}_{\{T<\tau_1\wedge\tau\}}\\
	\nonumber&+S_te^{-\delta(T-\tau)}e^{-r(\tau-t)}{\bf 1}_{\{\tau< T\wedge\tau_1\}}\\
	&+e^{-r(\tau_1-t)}\Phi_L(S_{\tau_1} ,\tau_1)\cdot {\bf 1}_{\{\tau_1< T\wedge\tau\}}\Big|S_t=S>\frac 1{\gamma e^{-\delta (T-t)}}\Phi_H(S,t)\Big], \\
	\label{lowholding}
	\nonumber\Phi_L(S,t)=&E[e^{-r(T-t)} \min(S_{T},F)\cdot {\bf 1}_{\{T<\tau_2\wedge\tau\}}\\
	\nonumber&+S_te^{-\delta(T-\tau)}e^{-r(\tau-t)}{\bf 1}_{\{\tau< T\wedge\tau_2\}} \\
	&+e^{-r(\tau_2-t)}\Phi_H(S_{\tau_2},\tau_2)\cdot {\bf 1}_{\{\tau_2< T\wedge\tau\}}\Big|\frac 1{ e^{-\delta (T-t)}}\Phi_L(S,t)<S_t=S<\frac 1{\gamma e^{-\delta (T-t)}}\Phi_L(S,t)\Big],
\end{align}
where ${\bf 1}_{\{event\}}=\left\{\begin{array}{ll}1,  &\mbox { if
		``event" happens},\\
	0, &\mbox{ otherwise. }\end{array}\right.$
\subsection{The PDE problem} In the life time of the bond, by Feynman-Kac formula
(see, e.g. \cite{DP}), it is not difficult to derive that the
letting values $\Phi_H$ and $\Phi_L$ satisfy the following system of partial
differential equations in their respective life regions:
\begin{eqnarray} \nonumber
	&&\frac{\partial\Phi_H}{\partial t} +\frac12 \s^2_HS^2\frac{\partial^2\Phi_H}{\partial S^2} + rS\frac{\partial\Phi_H}{\partial S}-r\Phi_H= 0,
	\\ &&\qquad\qquad \qquad\qquad\qquad\qquad S>\frac 1{\gamma e^{-\delta (T-t)}}\Phi_H,
	\;t>0, \label{1.1a}
	\\ \nonumber
	&& \frac{\partial\Phi_L}{\partial t} +\frac12 \s^2_LS^2\frac{\partial^2\Phi_L}{\partial S^2} + rS\frac{\partial\Phi_L}{\partial S}- r\Phi_L= 0, \\
	&&\qquad\qquad \qquad\qquad\qquad\qquad \frac 1{ e^{-\delta (T-t)}}\Phi_L<S<\frac 1{\gamma e^{-\delta (T-t)}}\Phi_L,
	\;t>0. \label{1.1b}
\end{eqnarray}
If the bond life last to
maturity, $\Phi_H$ and $\Phi_H$ satisfy the terminal conditions:
$$ 
\Phi_H(S,T)=\Phi_L(S,T)=\min\{S,F\}.
$$
Define the function $\Phi$ as 
\begin{equation*}
	\begin{split}
	\Phi(S,t)=\left\{\begin{split}
	&\Phi_H (S,t), \text{ in the high rating region;}\\
	&\Phi_L (S,t),	\text{ in the low rating region;}\\
	&Se^{-\delta(T-t)}, \text{ in the default region.}
	\\
	\end{split}
\right.
	\end{split}
\end{equation*}
Then, it satisfies the following variational form
$$
\min\Big\{  \frac{\partial \Phi}{\partial t} +\frac{1}{2} \sigma^2(\Phi,S,t)S^2 \frac{\partial^2 \Phi}{\partial S^2}+rS\frac{\partial \Phi}{\partial S}-r\Phi,\, -\Phi(S,t)+Se^{-\delta(T-t)} \Big\}=0,
$$
with 
$$
\sigma(\Phi,S,t)=\sigma_H{\bf 1}_{\{ \Phi<\gamma Se^{-\delta(T-t)} \}}+ \sigma_L{\bf 1}_{\{ \Phi\geqslant\gamma Se^{-\delta(T-t)} \}}.
$$
First, we make some transformation. Let $\phi(x,t)=e^{rt}\Phi(e^x,T-t)$. Then, $\phi$ satisfies
 $$
 \min\Big\{  -\frac{\partial \phi}{\partial t} +\frac{1}{2} \sigma^2(e^{-rt}\phi,e^x,t)\frac{\partial^2 \phi}{\partial x^2}+(r-\frac{1}{2}\sigma^2)\frac{\partial \phi}{\partial x},\, -\phi(s,t)+e^{x+(r-\delta)t} \Big\}=0.
 $$
 As already indicated in \cite{LWH16}, it is more convenient to work in the moving coordinate frame
 $$
 \xi=x+ct,\; c=r-\delta,\; u(\xi,t)=\phi(x,t).
 $$
 Then, the equation reads
 \begin{equation}\label{problem_evolution}
  \min\left\{  -\frac{\partial u}{\partial t} +\frac{1}{2} \sigma^2(u)\frac{\partial^2 u}{\partial \xi^2}+(\delta-\frac{1}{2}\sigma^2)\frac{\partial u}{\partial \xi},\,-u+e^{\xi} \right\}=0.
 \end{equation}
Let us introduce the weight $e^{-\xi}$ and make the further transformation
$ v=e^{-\xi} u$; we define
$${\mathcal L}:=-\frac{\partial }{\partial t} +\frac{1}{2} \sigma^2(v)\Big(\frac{\partial^2 }{\partial \xi^2}+\frac{\partial }{\partial \xi}\Big)+\delta\Big(\frac{\partial }{\partial \xi}+ 1\Big).$$
Thus, $v$ satisfies the following problem:
\begin{eqnarray}\label{problem_evolution1}
\min\left\{  {\mathcal L}v,\,1-v \right\}=0, \quad v(\xi,0)=\min\{1, e^{-\xi}\},
\end{eqnarray}
 with 
$$
 \sigma(v)=\sigma_H{\bf 1}_{\{ v<\gamma \}}+ \sigma_L{\bf 1}_{\{ v\geqslant\gamma \}}.
$$
Let us finally define the free boundaries which will play a crucial role throughout the paper, respectively the \textit{default boundary} 
$$\hat \kappa(t):=\inf \{ \xi\,|\, v(\xi,t)<1\},$$
and the \textit{transit boundary}
$$
\hat \eta(t):=\inf \{ \xi\,|\, v(\xi,t)<\gamma\}.
$$
Our goal is not only to solve \eqref{problem_evolution1} but also to study the properties of these boundaries. 
If the solution is smooth enough, system \eqref{problem_evolution1} can be rewritten as the \textit{free boundary problem}
\begin{equation}\label{FBP}
\left\{
\begin{split}
&-\frac{\partial v}{\partial t}+\frac12\sigma_L^2\Big(\frac{\partial^2 v}{\partial \xi^2}+\frac{\partial v}{\partial \xi} \Big)+\delta \Big(\frac{\partial v}{\partial \xi}+v \Big)=0, \quad \hat \kappa(t) <\xi<\hat \eta(t);\\
&-\frac{\partial v}{\partial t}+\frac12\sigma_H^2\Big(\frac{\partial^2 v}{\partial \xi^2}+\frac{\partial v}{\partial \xi} \Big)+\delta \Big(\frac{\partial v}{\partial \xi}+v \Big)=0, \quad \xi>\hat \eta(t);	\\
&v(\hat \kappa(t)+)=1,\quad \frac{\partial v}{\partial \xi}(\hat \kappa(t)+)=0;\\
&v(\hat \eta(t)+)=v(\hat \eta(t)-)=\gamma, \quad \frac{\partial v}{\partial \xi}(\hat \eta(t)+)=\frac{\partial v}{\partial \xi}(\hat \eta(t)-).
\end{split}\right.
\end{equation}
For convenience, we set
$$c_L= \frac{2\delta}{\sigma^2_L},\qquad c_H=\frac{2\delta}{\sigma^2_H}.$$
It follows from \eqref{main_sigma} that $c_L<1$ and $c_H>1$.
\section{Traveling Wave Solution}
  In this section, we will consider the steady state of \eqref{problem_evolution1}, i.e. the traveling wave solution for the original problem. In addition to giving the asymptotic behavior  of \eqref{problem_evolution1}, the traveling wave equation is also useful for constructing sub-solutions.  The traveling wave solution $K$ satisfies 
\begin{equation}\label{steady0}
	\min\left\{ \frac{1}{2} \sigma^2(K)\Big(\frac{dK}{d\xi^2}+\frac{dK}{d\xi}\Big)+\delta\Big (\frac{dK}{d\xi}+K\Big)  ,\,1-K \right\}=0.
\end{equation}
 Denoting the two free boundaries respectively by $\kappa^*$ and $\eta^*$, and assuming that the solution is sufficiently smooth, we may reformulate Equation \eqref{steady0} as  the following free boundary problem
 \begin{equation}\label{steady1}
 \left \{
 \begin{split}
 	&\frac{d^2 K}{d\xi^2}+\frac{dK}{d\xi}+c_H\Big(\frac{dK}{d\xi} +K\Big)=0,  \quad \xi>\eta^*,\\
 	&\frac{d^2 K}{d\xi^2}+\frac{dK}{d\xi}+c_L\Big(\frac{dK}{d\xi} +K\Big)=0,  \quad \kappa^* <\xi<\eta^*,\\
 	&K(\kappa^*+)=1, \quad \frac{\pr K}{\pr \xi}(\kappa^*)=0,\\
 	&K(\eta^*+)=K(\eta^*-)=\gamma, \quad \frac{dK}{d\xi}({\eta^*+})= \frac{dK}{d\xi}({\eta^*-}),\\
 	&K(\xi)=1, \text{for $\xi <\kappa^*$, and }\lim_{\xi \rightarrow +\infty} e^{\xi}K(\xi)=1, 
 \end{split}
 \right.
 \end{equation} 
Note that we also add a growth condition at $+\infty$ due to the financial nature of our problem. 
\begin{theorem}\label{thm_TW} 
System \eqref{steady1} has a unique solution $(K,\eta^*,\kappa^*)$ such that $K$ belongs to $C^1([\kappa^*,+\infty))$ and the respective restrictions of $K$ to $[\kappa^*,\eta^*]$ and $[\eta^*,+\infty]$ are $C^\infty$.
\end{theorem}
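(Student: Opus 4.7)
The plan is to reduce system \eqref{steady1} to an explicit algebraic problem in three scalar unknowns, namely $\kappa^*$, $\eta^*$, and the tail constant $B$, and then show this algebraic problem has a unique admissible solution. The key observation is that the two linear constant-coefficient ODEs can be solved in closed form on each side of $\eta^*$, so the whole construction reduces to solving a small nonlinear system for the free boundaries. The characteristic polynomials factor as $(\lambda+1)(\lambda+c_H)$ and $(\lambda+1)(\lambda+c_L)$, so the general solutions are linear combinations of $e^{-\xi}$ with either $e^{-c_H\xi}$ or $e^{-c_L\xi}$. Since $c_H>1$ (one half of hypothesis \eqref{main_sigma}), the growth condition $\lim_{\xi\to+\infty}e^\xi K(\xi)=1$ rules out an $e^{-c_H\xi}$ leading term and forces
\[
K(\xi)=e^{-\xi}+Be^{-c_H\xi}\qquad\text{on }(\eta^*,+\infty),
\]
while the left boundary conditions $K(\kappa^*+)=1$, $K'(\kappa^*)=0$ pin down the middle profile uniquely in terms of $\kappa^*$:
\[
K(\xi)=\frac{1}{1-c_L}\bigl(e^{-c_L(\xi-\kappa^*)}-c_L\,e^{-(\xi-\kappa^*)}\bigr)\qquad\text{on }[\kappa^*,\eta^*].
\]

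Next I would set $z:=\eta^*-\kappa^*$ and $a:=e^{-\eta^*}$ and impose the three matching conditions at $\eta^*$. The left continuity $K(\eta^*-)=\gamma$ becomes $f(z)=\gamma$ with $f(z):=(e^{-c_L z}-c_L e^{-z})/(1-c_L)$. A direct calculation gives $f(0)=1$, $f(+\infty)=0$, and $f'(z)=\frac{c_L}{1-c_L}(e^{-z}-e^{-c_L z})<0$ for $z>0$ (the sign uses $c_L<1$), so $f$ is a strictly decreasing bijection from $[0,\infty)$ onto $(0,1]$ and $z=z(\gamma)>0$ is uniquely determined. Substituting $e^{-c_L z}=(1-c_L)\gamma+c_L e^{-z}$ into the derivative formula then yields the clean identity
\[
\mu:=-K'(\eta^*-)=c_L(\gamma-e^{-z})>0.
\]
The remaining two right-side conditions reduce to the linear system $a+Ba^{c_H}=\gamma$ and $a+c_H B a^{c_H}=\mu$, from which
\[
(c_H-1)a=c_H\gamma-\mu=(c_H-c_L)\gamma+c_L e^{-z}.
\]

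The step I expected to be the main obstacle was verifying $a>0$, since this is exactly where the other half of \eqref{main_sigma} should come in. Fortunately, the identity $\mu=c_L(\gamma-e^{-z})$ makes the right-hand side above manifestly positive from $c_H>c_L$ and $\gamma,e^{-z}>0$, so $a>0$ is automatic and uniquely determined; this then fixes $\eta^*=-\ln a$, $\kappa^*=\eta^*-z$, and $B=(\gamma-a)a^{-c_H}$. The $C^\infty$ regularity of $K$ on each of $[\kappa^*,\eta^*]$ and $[\eta^*,+\infty)$ is immediate from the explicit exponential representations, and the $C^1$ matching across $\eta^*$ is built into the construction. To close, I would check from the explicit formulas that $K$ is strictly decreasing on $[\kappa^*,+\infty)$ (using $B<0$, which follows from $\mu<c_L\gamma<\gamma$ since $c_L<1$), which gives $\gamma\leqslant K\leqslant 1$ on $[\kappa^*,\eta^*]$ and $0<K\leqslant\gamma$ on $[\eta^*,+\infty)$, consistent with the original variational formulation \eqref{steady0}.
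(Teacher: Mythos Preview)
Your proposal is correct and follows essentially the same approach as the paper: both write down the explicit exponential solutions on each subinterval, use the two conditions at $\kappa^*$ to pin down the middle profile in terms of $z=\eta^*-\kappa^*$, reduce $K(\eta^*-)=\gamma$ to a scalar equation $f(z)=\gamma$ (the paper's $\Psi$) whose unique positive root is obtained from strict monotonicity via $c_L<1$, and then solve the two matching conditions at $\eta^*$ linearly for $e^{-\eta^*}$ and $B$ using $c_H>1$. Your derivation of the identity $\mu=c_L(\gamma-e^{-z})$ and the resulting formula $(c_H-1)e^{-\eta^*}=(c_H-c_L)\gamma+c_L e^{-z}$ is algebraically equivalent to the paper's $(c_H-1)e^{-\eta^*}=(c_H-1)\gamma+e^{-c_L z}$ once one substitutes $e^{-c_L z}=(1-c_L)\gamma+c_L e^{-z}$.
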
 
\begin{proof}
	It is elementary to solve the second order system  in \eqref{steady1}:
	\begin{equation}\label{steady_sol}
	K(\xi)=\left\{
	\begin{split}
	&e^{-\xi}+Be^{-c_H\xi}, \;\xi>\eta^*,\\
	&Ce^{-\xi}+De^{-c_L\xi},\; \kappa^* <\xi<\eta^*.
	\end{split}
	\right.
	\end{equation}
	From the boundary conditions at $\kappa^*$, it comes 
	$$
	Ce^{-\kappa^*}+De^{-c_L\kappa^*}=1,\text{ and }-Ce^{-\kappa^*}-c_LDe^{-c_L\kappa^*}=0.
	$$
	This implies that $C=-\frac{c_L}{1-c_L}e^{\kappa^*}$ and $D=\frac{1}{1-c_L}e^{c_L \kappa^*}$.
	Then, from $ K(\eta^*-)=\gamma $, we have that 
	\begin{equation}\label{diff_equa}
	-\frac{c_L}{1-c_L}e^{\kappa^*-\eta^*}+\frac{1}{1-c_L}e^{-c_L(\eta^*-\kappa^*)}=\gamma.
	\end{equation}
	{Define the mapping 
		\begin{eqnarray}\label{psi}
		\Psi(x): x \mapsto -\frac{c_L}{1-c_L}e^{-x}+\frac{1}{1-c_L} e^{-c_Lx},
		\end{eqnarray}
		hence $\Psi'(x)=\frac{c_L}{1-c_L}(e^{-x}-e^{-c_L x})$. Since $c_L<1$, we have that the mapping $\Psi$ 
		is  decreasing on $[0,\infty)$. Since $\Psi(0)=1$ and $\lim_{x \rightarrow +\infty}\Psi(x)=0$,} the transcendental  equation \eqref{diff_equa} admits a unique positive solution
	\begin{equation}\label{eq_diff}
	\eta^*-\kappa^* = \Psi^{-1}(\gamma),
	\end{equation}
	The interface condition $\big[\frac{dK}{d\xi}\big]_{\eta^*}=0$ yields that 
	$$
	e^{-\eta^*} +c_H Be^{-c_H \eta^*}=-\frac{c_L}{1-c_L}e^{-(\eta^*-\kappa^*)}+\frac{c_L}{1-c_L} e^{-c_L(\eta^*-\kappa^*)}=\gamma -e^{-c_L(\eta^*-\kappa^*)},
	$$
	where the last equality is due to \eqref{eq_diff}. Combining with the condition $\gamma=K(\eta^*+)=e^{-\eta^*}+Be^{-c_H \eta^*}$, we have that 
	$$
	B=-\frac{1}{c_H-1} e^{-c_L(\eta^*-\kappa^*)+c_H\eta^*}\text{ and } (c_H-1)e^{-\eta^*}=(c_H-1)\gamma +e^{-c_L(\eta^*-\kappa^*)}.
	$$
	This implies that 
	\begin{equation}\label{eq_eta}
	\eta^*= -\log\left(\gamma + \frac{1}{c_H-1} e^{-c_L\Psi^{-1}(\gamma)}\right).
	\end{equation}
	Thus, $\kappa^*$, $B,C$ and $D$ are determined. Summarizing, it comes
		\begin{eqnarray}\label{K_equation}
		K(\xi)=\left\{\begin{array}{ll}
		e^{-\xi}+(\gamma-e^{-\eta^*})e^{-c_H(\xi-\eta^*)},  \;&\xi>\eta^*,\\[2mm]
		-\frac{c_L}{1-c_L}e^{-(\xi-\kappa^*)}+\frac{1}{1-c_L} e^{-c_L(\xi-\kappa^*)},\; &\kappa^* <\xi<\eta^*.
		\end{array}\right.
		\end{eqnarray}
\end{proof}

Some properties of $K$ are needed in the sections below. We list them in the following proposition.
\begin{proposition}\label{prop_TW}
(i) for $\xi > \kappa^*$, $\frac{dK}{d\xi}< 0$, $K+\frac{dK}{d\xi} >0$, and $\frac{d^2K}{d\xi^2}+\frac{dK}{d\xi} <0$ if $\xi \neq \eta^*$; \\
(ii) $\gamma <K(\xi)<1$ if $\xi \in (\kappa^*,\eta^*)$ and $K(\xi)<\gamma <1$ if $\xi >\eta^*$;\\
(iii) for $\xi \geqslant \kappa^*$, $K(\xi) \leqslant \min\{ 1,e^{-\xi}\}$;\\
(iv) $\eta^*$ is  a decreasing function of $\gamma$. Moreover, $\lim_{\gamma \rightarrow 0}\eta^*=+\infty$ and $\lim_{\gamma \rightarrow 1}\eta^*=-\log \frac{c_H}{c_H-1}$. 	
\end{proposition}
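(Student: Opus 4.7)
My approach is to verify each assertion by direct computation from the closed-form expression \eqref{K_equation} for $K$, the formula \eqref{eq_eta} for $\eta^*$, and the identity $\eta^*-\kappa^* = \Psi^{-1}(\gamma)$ established in the proof of Theorem \ref{thm_TW}. No further PDE machinery should be necessary.

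For (i), differentiating \eqref{K_equation} gives $K'(\xi) = \frac{c_L}{1-c_L}\bigl(e^{-(\xi-\kappa^*)} - e^{-c_L(\xi-\kappa^*)}\bigr) < 0$ on $(\kappa^*,\eta^*)$ since $c_L < 1$ by \eqref{main_sigma}. A brief cancellation yields the key identities
\begin{equation*}
K+K' = e^{-c_L(\xi-\kappa^*)}\quad(\kappa^*<\xi<\eta^*), \qquad K+K' = e^{-c_L(\eta^*-\kappa^*)}\,e^{-c_H(\xi-\eta^*)}\quad(\xi>\eta^*),
\end{equation*}
both strictly positive, and the ODEs in \eqref{steady1} then give $K''+K' = -c_L(K+K')$ or $-c_H(K+K')$ respectively, so $K''+K' < 0$ for $\xi \neq \eta^*$. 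The remaining point is $K' < 0$ on $(\eta^*,\infty)$. For this I would introduce $h(\xi) := e^\xi K'(\xi)$, which in the outer region equals $-1 + A\,e^{-(c_H-1)\xi + c_H\eta^*}$ with $A = \frac{c_H}{c_H-1}e^{-c_L(\eta^*-\kappa^*)} > 0$, hence is strictly decreasing in $\xi$ because $c_H > 1$. At $\xi = \eta^*$ one computes $h(\eta^*) = c_L e^{\eta^*}\bigl(e^{-(\eta^*-\kappa^*)} - \gamma\bigr)$, and the pointwise inequality $e^{-x} < \Psi(x)$ for $x > 0$ (which follows from $\Psi(x) - e^{-x} = \frac{1}{1-c_L}(e^{-c_Lx}-e^{-x}) > 0$) yields $h(\eta^*) < 0$, so $h < 0$ on $[\eta^*,\infty)$.

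Part (ii) is then immediate from the strict monotonicity of $K$ proved in (i), combined with the boundary values $K(\kappa^*+) = 1$ and $K(\eta^*) = \gamma$ and the decay $K(\xi) \to 0$ coming from $\lim_{\xi\to\infty}e^\xi K(\xi) = 1$. For (iii), I would analyse $f(\xi) := e^\xi K(\xi) - 1$. In the outer region $f(\xi) = (\gamma - e^{-\eta^*})e^{-(c_H-1)\xi+c_H\eta^*}$, which is negative because \eqref{eq_eta} rewritten as $(c_H-1)e^{-\eta^*} = (c_H-1)\gamma + e^{-c_L(\eta^*-\kappa^*)}$ gives $e^{-\eta^*} > \gamma$. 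A direct differentiation on the middle region gives $f'(\xi) = e^{(1-c_L)\xi + c_L\kappa^*} > 0$, so $f$ is strictly increasing on $[\kappa^*,\eta^*]$; continuity at $\eta^*$ then yields $f < 0$ on $[\kappa^*,\eta^*]$, and combined with $K \leq 1$ from (ii) this establishes $K(\xi) \leq \min\{1,e^{-\xi}\}$. As a byproduct one also obtains $\kappa^* < 0$.

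For (iv), I write $\eta^* = -\log g(\gamma)$ with $g(\gamma) := \gamma + \frac{1}{c_H-1}e^{-c_L\Psi^{-1}(\gamma)}$. Since $\Psi$ is strictly decreasing on $[0,\infty)$, so is $\Psi^{-1}$ on $(0,1]$; hence $e^{-c_L\Psi^{-1}(\gamma)}$ is increasing in $\gamma$ and $g'(\gamma) > 1$, showing that $\eta^*$ is strictly decreasing. The limits follow from $\Psi^{-1}(\gamma) \to +\infty$ as $\gamma \to 0^+$ (so $g(\gamma) \to 0$ and $\eta^* \to +\infty$) and $\Psi^{-1}(\gamma) \to 0$ as $\gamma \to 1^-$ (so $g(\gamma) \to 1 + \frac{1}{c_H-1} = \frac{c_H}{c_H-1}$, giving $\eta^* \to -\log\frac{c_H}{c_H-1}$). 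The one nonroutine step in the whole proposition is pinning down the sign of $K'$ in the outer region; the monotonicity of the auxiliary function $h$ described above is the cleanest device I see to handle it without case analysis of critical points.
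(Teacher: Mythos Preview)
Your proof is correct and follows essentially the same route as the paper: direct computation from the closed form \eqref{K_equation} together with the relations \eqref{eq_diff} and \eqref{eq_eta}. The only cosmetic differences are that you package the outer-region sign of $K'$ via the monotone auxiliary $h(\xi)=e^\xi K'(\xi)$, and you write $K+K'=(e^{-c_L(\eta^*-\kappa^*)})e^{-c_H(\xi-\eta^*)}$ in the outer region (manifestly positive), whereas the paper writes the equivalent $(1-c_H)(\gamma-e^{-\eta^*})e^{-c_H(\xi-\eta^*)}$ and invokes $e^{-\eta^*}>\gamma$; these are the same computations rearranged.
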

\begin{proof}
	(i) It is straightforward to compute
	$$\frac{dK}{d\xi}=\left\{
	\begin{array}{ll}
	-e^{-\xi}-c_H(\gamma-e^{-\eta^*})e^{-c_H(\xi-\eta^*)},  \;&\xi>\eta^*,\\[1mm]
	\frac{c_L}{1-c_L}e^{-(\xi-\kappa^*)}-\frac{c_L}{1-c_L} e^{-c_L(\xi-\kappa^*)},\; &\kappa^* <\xi<\eta^*.
	\end{array}
	\right.$$ 
	Since $c_L<1$, it holds that $\frac{dK}{d\xi} < 0$ for $\kappa^* <\xi<\eta^*$. For $\xi >\eta^*$, we rewrite $$\frac{d K}{d \xi}=-e^{-\eta^*}e^{-(\xi-\eta^*)}-c_H(\gamma-e^{-\eta^*})e^{-c_H(\xi-\eta^*)}.$$ With the notation from Theorem \ref{thm_TW}, we have that $$c_HBe^{-c_H\eta^*}= c_H(\gamma-e^{-\eta^*})$$ and 
	$$
	e^{-\eta^*} +c_H Be^{-c_H \eta^*}=-\frac{c_L}{1-c_L}e^{-(\eta^*-\kappa^*)}+\frac{c_L}{1-c_L} e^{-c_L(\eta^*-\kappa^*)}>0.
	$$
	Since $c_H>1$, it holds that $\frac{d K}{d \xi}< 0$ for $\xi>\eta^*$. Next, it comes 
	$$K+\frac{d K}{d \xi}=\left\{
	\begin{array}{ll}
	(1-c_H)(\gamma-e^{-\eta^*})e^{-c_H(\xi-\eta^*)},  \;&\xi>\eta^*,\\[2mm]
	e^{-c_L(\xi-\kappa^*)},\; &\kappa^* <\xi<\eta^*,
	\end{array}
	\right.$$
	and
	$$\frac{d K}{d \xi}+\frac{d^2 K}{d \xi^2}=\left\{
	\begin{array}{ll}
	(c_H^2-c_H)(\gamma-e^{-\eta^*})e^{-c_H(\xi-\eta^*)},  \;&\xi>\eta^*,\\[2mm]
	-c_Le^{-c_L(\xi-\kappa^*)},\; &\kappa^* <\xi<\eta^*.
	\end{array},
	\right.$$
	Noting that $e^{-\eta^*}=\gamma+\frac{1}{c_H-1} e^{-c_L\Psi^{-1}(\gamma)}>\gamma$, $c_L<1$ and $c_H>1$, we achieve the desired results.
	 
	(ii) It follows immediately from (i).
	
	(iii) We know from (ii) that  $K(\xi) \le 1$. On the one hand, thanks to (\ref{eq_eta}), $\gamma-e^{-\eta^*}<0$ hence $K(\xi)< e^{-\xi}$ if $\xi >\eta^*$  (see \eqref{K_equation}). On the other hand, note that $K+\frac{d K}{d \xi} > 0$ implies that $\xi \mapsto e^{\xi}K(\xi)$ is increasing, which indicates that $K(\xi) <e^{-\xi}$ for 
	$\kappa^* <\xi<\eta^*$. 
	
	(iv) Since $\Psi^{-1}$ is decreasing with respect to $\gamma$ and $c_H>1$, it follows from \eqref{eq_eta} that $\eta^*$ is  decreasing with respect to $\gamma$. It also holds that $\lim_{\gamma \rightarrow 0}\Psi^{-1}(\gamma)=+\infty$ and  $\lim_{\gamma \rightarrow 1}\Psi^{-1}(\gamma)=0$, hence the result.
\end{proof}

\section{Penalized and Regularized Cauchy Problem}

Problem \eqref{problem_evolution1} has singularities: at $v=\gamma$ due to the indicator function in the definition of $\sigma$; at $v=1$ as in a usual obstacle problem; and at $t=0$ because of the lack of regularity of the initial condition. To address these issues, we introduce $H_\e$, $\beta_\e$ and $\psi_\e$ which depend upon a small positive parameter $\e$. These smooth functions are chosen as the following. Let $H(s)$ be the Heaviside function, i.e., $H(s)=0$ for
$s<0$ and $H(s)=1$ for $s>0$. Then, $\sigma(v)$ in (\ref{problem_evolution1}) reads
 \[
 \s(v) = \s_H + (\s_L-\s_H) H(v - \gamma ).
\] 
First, we approximate $H$ by a $C^\infty $ function $H_\e$ such
that
\[
 H_\e(s) = 0 \,\mbox{ for } s<-\e, \, H_\e(s) =1 \mbox{ for } s>0, \, 0\leqslant H_\e'(s)\leqslant 2/\e\ \mbox{ for } -\infty<s<\infty.
\]
Second, let $\beta_\e(y)$ be  a smooth penalty function satisfying the following condition:
$$
\beta_\e(y) \in C^{\infty}(\mathbb R),\, \beta_\e(y) \geqslant 0,\, \beta_{\e}(y)=0\text{ if $y\leqslant -\e$;}
$$
$$
\beta_\e(0)=C_0\geqslant 2\delta;\; \beta'_\e(y) \geqslant 0; \;\beta''_\e(y)\geqslant 0;
$$
$$
\lim_{\e \rightarrow 0} \beta_\e(y)=0 \text{ if $y<0$; and }  \lim_{\e \rightarrow 0} \beta_\e(y)=+\infty \text{ if $y>0$.}
$$
Let $\varepsilon_\beta>0$ be the unique solution of $\beta_{\e}(-\frac{\e_{\beta}}{2})=\delta$. It is easy to see that $\e_\beta \rightarrow 0$ when $\e \rightarrow 0$.
Finally, let us define $\psi_{\e}(y):=1+\e_\beta\psi(\frac {y-1}{\e_\beta})$, where   $\psi \in C^{\infty}$, $\psi(y)=0$ for $y \geqslant 1/2$; $\psi(y)=y$ for $y<-1/2$ and $\psi(y)\leqslant y,\, 0 \leqslant \psi'(y) \leqslant 1,\, \psi''(y)\leqslant 0$ for $-1/2\leqslant y \leqslant 1/2$.

From the construction of $\psi_\e$, we have the following lemma.
\begin{lemma}\label{lem_psi}
	    (i) For $y \geqslant 0$, $0 \leqslant y\psi'_{\e}(y)\leqslant (1+\e_\beta)$; \;
		(ii) $0\leqslant  \psi_{\e}(y)-y\psi'_{\e}(y) \leqslant 1$.
\end{lemma}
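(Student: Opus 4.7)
The plan is to perform the change of variable $z=(y-1)/\e_\beta$, so that $\psi_\e(y)=1+\e_\beta\psi(z)$ and $\psi'_\e(y)=\psi'(z)$, and to distinguish three zones according to the piecewise description of $\psi$: the left zone $z\le-1/2$ (equivalently $y\le 1-\e_\beta/2$), the right zone $z\ge 1/2$ (equivalently $y\ge 1+\e_\beta/2$), and the transition zone $-1/2\le z\le 1/2$.

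On the left zone one has $\psi(z)=z$ and $\psi'(z)=1$, so $\psi_\e(y)=y$ and $\psi'_\e(y)=1$; this yields $y\psi'_\e(y)=y\in[0,1-\e_\beta/2]$ whenever $y\ge 0$, and $\psi_\e(y)-y\psi'_\e(y)=0$. On the right zone, $\psi(z)=0$ and $\psi'(z)=0$, so $\psi_\e(y)=1$ and $\psi'_\e(y)=0$, whence $y\psi'_\e(y)=0$ and $\psi_\e(y)-y\psi'_\e(y)=1$. Both inequalities (i) and (ii) are therefore immediate in these two zones.

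The only interesting region is the transition zone. For (i), the assumptions $0\le\psi'\le 1$ and $y\in[1-\e_\beta/2,1+\e_\beta/2]$ give directly $0\le y\psi'_\e(y)\le 1+\e_\beta/2\le 1+\e_\beta$. For (ii), I introduce $g(y):=\psi_\e(y)-y\psi'_\e(y)$ and compute
$$g'(y)=-y\,\psi''_\e(y)=-\frac{y}{\e_\beta}\psi''(z).$$
Since $\e_\beta\to 0$ as $\e\to 0$, we may assume $\e_\beta<2$, so $y\ge 1-\e_\beta/2>0$ on the transition zone; combined with $\psi''\le 0$ this gives $g'(y)\ge 0$. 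Hence $g$ is nondecreasing there, and matching with the values already computed on the two extreme zones, namely $g(1-\e_\beta/2)=0$ and $g(1+\e_\beta/2)=1$, we conclude $g(y)\in[0,1]$ throughout.

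The main (and essentially only) nontrivial step is the monotonicity of $g$ in the transition zone, which hinges on the concavity of $\psi$ and on the positivity of $y$ there; everything else reduces to substituting the explicit piecewise formulas for $\psi$ and $\psi'$. Combining the three zones yields (i) for every $y\ge 0$ and (ii) for every $y\in\R$.
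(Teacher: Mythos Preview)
Your proof is correct and follows essentially the same approach as the paper: both use the formula $g'(y)=-\frac{y}{\e_\beta}\psi''\big(\tfrac{y-1}{\e_\beta}\big)$ together with $\psi''\le 0$ to obtain the monotonicity needed for (ii), and both use $0\le\psi'\le 1$ with the explicit range of $y$ for (i). The only difference is presentational—you organize the argument as an explicit three-zone case split, whereas the paper argues globally (observing the minimum of $g$ occurs at $y=0$)—but the substance is identical.
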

\begin{proof}
(i) It is easy to see that $y\psi'_\e(y)=y\psi'(\frac{y-1}{\e_\beta})$, hence positive for $y\geqslant 0$. Note that $\psi'(\frac{y-1}{\e_\beta})=0 $ for $y \geqslant 1+\frac{\e_\beta}{2}$ and $\psi'(\frac{y-1}{\e_\beta}) \leqslant 1$ for $y \leqslant 1+\frac{\e_\beta}{2}$. Then, we shall have the second inequality. \\ 
(ii) Differentiating $\psi_{\e}(y)-y\psi'_{\e}(y)$, we have that 
\begin{equation}\label{ineq_derivative}
(\psi_{\e}(y)-y\psi'_{\e}(y))'=-\frac{y}{\e_\beta}\psi''(\frac{y-1}{\e_\beta}).
\end{equation}
This implies that the minimum is achieved at $y=0$. Thus,
$$
\psi_{\e}(y)-y\psi'_{\e}(y) \geqslant \psi_{\e}(0)=0.
$$
It is easy to verify that $\psi_{\e}(y)-y\psi'_{\e}(y)=1$ for $y<1-\frac{\e_\beta}{2}$ or $y>1+\frac{\e_\beta}{2}$. From \eqref{ineq_derivative}, we see that $\psi_{\e}(y)-y\psi'_{\e}(y) \leqslant 1$ for any $y$.
\end{proof}

Now, for $\e$ small, we consider the following approximated Cauchy problem:
\begin{equation}\label{2.1} 
\begin{split}
{ {\mathcal L}_{\e}}[v_{\e}] = -\frac{\pr v_{\e}}{\pr t}
+\frac12 \s^2_{\e}(v_\e)\Big(\frac{\pr^2v_{\e}}{\pr \xi^2} + \frac{\pr v_{\e}}{\pr \xi}\Big)+\delta\Big(\frac{\pr v_{\e}}{\pr \xi}
+v_{\e}\Big) =\beta_\e(v_{\e}-1),
\end{split} 
\end{equation}
where $(\xi,t) \in \Omega_T =\mathbb R \times (0,T]$, $T>0$, and
\begin{equation}\label{sigma}\s_\e(v_\e) = \s_H + (\s_L-\s_H)
H_\e(v_{\e} - \gamma), \end{equation} 
together with the initial condition
 \begin{equation}\label{2.2}
 v_{\e}(\xi,0) = \psi_{\e}(e^{-\xi}), \quad \xi\in\mathbb R.
\end{equation} 
Hence, from the definition of $\psi_\e$ in the previous, we have that $v_\e(\xi,0)=1$ for $\xi \leqslant -\log (1+\frac{\e_\beta}{2})$; $v_\e(\xi,0)=e^{-\xi}$ for $\xi \geqslant -\log(1-\frac{\e_\beta}{2})$. We have the following existence result: 
\begin{theorem}\label{existence_approximated}
	For $\e>0$ fixed, problem \eqref{2.1}-\eqref{2.2} has a unique bounded classical solution $v_\e$. Moreover, $v_\e \in C^\infty(\mathbb R \times[0,T])$.
\end{theorem}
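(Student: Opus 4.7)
The plan is to treat \eqref{2.1}-\eqref{2.2} as a uniformly parabolic quasilinear Cauchy problem on $\mathbb{R}\times[0,T]$ with $C^\infty$ coefficients and $C^\infty$ initial datum, and to apply a Schauder fixed point argument followed by a standard parabolic bootstrap. First I would establish an a priori $L^\infty$ bound: from the construction of $\psi_\e$ (the non-decreasing concave $\psi$ on $[-1/2,1/2]$ vanishing on $[1/2,\infty)$) one gets $\psi\leq 0$, hence $0\leq \psi_\e(e^{-\xi})\leq 1$. The upper constant $\bar v\equiv 1$ satisfies $\mathcal L_\e[\bar v]-\beta_\e(\bar v-1)=\delta-\beta_\e(0)\leq -\delta<0$ since $\beta_\e(0)=C_0\geq 2\delta$, so it is a strict supersolution, and $\underline v\equiv 0$ is a subsolution because $\mathcal L_\e[0]=0$ and $\beta_\e(-1)=0$ for $\e$ small. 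The comparison principle for bounded classical solutions of the uniformly parabolic equation ($\sigma_\e^2\geq\sigma_H^2>0$) then traps $v_\e$ in $[0,1]$.

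For existence I would set $X=\{w\in C^{2+\alpha,1+\alpha/2}_b(\mathbb{R}\times[0,T])\colon 0\leq w\leq 1\}$ for some $\alpha\in(0,1)$ and define $\mathcal T w$ as the unique bounded classical solution of the linear Cauchy problem
\[
-\partial_t v+\tfrac12\sigma_\e^2(w)\bigl(\partial^2_\xi v+\partial_\xi v\bigr)+\delta(\partial_\xi v+v)=\beta_\e(w-1),\qquad v(\xi,0)=\psi_\e(e^{-\xi}),
\]
whose existence, uniqueness and interior Schauder estimates follow from classical linear parabolic theory on $\mathbb{R}\times[0,T]$, the coefficients being smooth and the leading one uniformly elliptic. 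The barriers from the previous step give $0\leq\mathcal T w\leq 1$, and the Schauder estimates imply that $\mathcal T$ maps $X$ into a relatively compact subset of $X$ in the topology of $C^{2,1}_{\mathrm{loc}}$; continuity of $\mathcal T$ being routine, Schauder's fixed point theorem produces a solution $v_\e$.

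Once $v_\e\in C^{2+\alpha,1+\alpha/2}_{\mathrm{loc}}$, the compositions $\sigma_\e^2(v_\e)$ and $\beta_\e(v_\e-1)$ inherit the same regularity since $H_\e,\beta_\e\in C^\infty$, and iterating interior Schauder estimates upgrades $v_\e$ to $C^{k+\alpha,(k+\alpha)/2}$ for every $k$, giving $v_\e\in C^\infty$ in the interior. Because $\psi_\e(e^{-\xi})$ is itself $C^\infty$ in $\xi$, the equation expresses each $t$-derivative at $t=0$ as a polynomial in $\xi$-derivatives of the initial datum, so regularity extends up to $\{t=0\}$. Uniqueness follows by setting $w=v_1-v_2$ for two bounded solutions, linearizing using the Lipschitz continuity of $\sigma_\e^2$ and $\beta_\e$ on $[0,1]$, and applying the maximum principle to the resulting linear parabolic equation on $\mathbb{R}\times[0,T]$.

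The main difficulty I anticipate is the unboundedness of the spatial domain, which forces a careful choice of function spaces accommodating the two distinct asymptotic limits $v_\e\to 1$ as $\xi\to-\infty$ and $v_\e\to 0$ as $\xi\to+\infty$, and requires a version of the comparison principle valid for bounded classical solutions on all of $\mathbb{R}$. Once this framework is in place the rest of the argument is standard, because the $\e$-regularization has removed every discontinuity and turned the problem into a smooth, uniformly parabolic, quasilinear equation with smooth source and smooth initial data.
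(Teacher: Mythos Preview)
Your approach is genuinely different from the paper's. The paper rewrites \eqref{2.1} in divergence form,
\[
\partial_t v_\e-\partial_\xi\Bigl(\tfrac12\sigma_\e^2(v_\e)\,\partial_\xi v_\e\Bigr)+A(\xi,v_\e,\partial_\xi v_\e)=0,
\]
and then invokes directly a black-box existence theorem for quasilinear parabolic Cauchy problems (Ladyzenskaja--Solonnikov--Uralceva, Chapter~V, Theorem~8.1), which delivers a unique bounded solution in $C^{2+\alpha,1+\alpha/2}(\mathbb R\times[0,T])$. The bootstrap to $C^\infty$ via linear Schauder theory is then identical to yours. This is considerably shorter because the a~priori interior H\"older estimate needed to close any fixed-point scheme is already packaged inside that theorem.

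Your Schauder fixed-point route is conceptually sound, but as written it has a gap in the compactness step. The set $X=\{w\in C^{2+\alpha,1+\alpha/2}_b:0\le w\le 1\}$ carries no bound on the H\"older norm of $w$; for $w\in X$ the coefficient $\sigma_\e^2(w)$ is therefore in $C^{\alpha,\alpha/2}$ but with arbitrarily large seminorm (since $\|\sigma_\e^2(w)\|_{C^{\alpha,\alpha/2}}\lesssim \e^{-1}\|w\|_{C^{\alpha,\alpha/2}}$), and the linear Schauder estimate on $\mathcal T w$ is not uniform over $X$. Hence $\mathcal T(X)$ is not shown to be relatively compact in any topology in which $X$ is closed. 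To repair this you need an a~priori $C^{\alpha,\alpha/2}$ bound on solutions that is independent of the iteration---this is precisely the De~Giorgi--Nash--Moser step that the Ladyzenskaja theorem encapsulates---or else recast the argument as a Leray--Schauder continuation in a weaker space. The unbounded-domain issue you flag at the end is comparatively minor: once the uniform H\"older control is in hand, Schauder--Tychonoff in the Fr\'echet space $C^{2,1}_{\mathrm{loc}}$ applies, or one truncates to $[-R,R]$ with the known far-field behaviour as boundary data and lets $R\to\infty$.
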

\begin{proof}
	First, we turn Equation \eqref{2.1} into a quasilinear equation whose principal part is in divergence form:
		\begin{equation}
		\frac{\pr v_{\e}}{\pr t}-\frac{\partial }{\pr \xi} a\big(\xi,v_\e,\frac{\partial v_\e }{\pr \xi}\big)+A\big(\xi,v_\e,\frac{\partial v_\e }{\pr \xi} \big)=0,
		\end{equation}
		with 
		$$
		a(\xi,v,p)=\frac{1}{2}\sigma^2_\e(v) p, \quad A(\xi,v,p)=\beta_\e(v-1)-\delta v-\big(\frac{1}{2}\sigma^2_\e(v)+\delta\big)p+\sigma_\e\sigma'_\e(v)p^2.
		$$
One can check that  $a$ and $A$ satisfy the assumptions of \cite[Chapter V, Theorem 8.1]{La}. Thus, there exists a unique bounded solution $v_\e \in C^{2+\alpha,1+\frac\alpha2}(\mathbb R \times[0,T])$ for any $0<\alpha<1$.\footnote{For usual parabolic H\"older spaces, see, e.g., \cite[Chapter 1, Section 1]{La},\cite[Section 5.1]{Lunardi96}).} Then, $\sigma_\e(v_\e)$ and $\beta_\e(v_\e)$ belong to the same function class. Further H\"older regularity follows from classical results for linear problems (see \cite[Chapter IV, Theorem 5.1]{La}, \cite[Theorem 5.1.10]{Lunardi96}), which yields that  $v_\e \in C^{4+\alpha,2+\frac\alpha2}(\mathbb R \times[0,T])$. The result follows by bootstrapping.
\end{proof}
\begin{remark}\label{r1}
From the definition of $H_\e$ and $\beta_\e$, it is easy to see that $\sigma_\e(v_\e)=\sigma_L$ when $v_\e >\gamma$ and $\beta_\e(v_\e)=0$ when $v_\e<1-\e$. Thus, when $\e$ is small enough,  at least one of these two equations holds.
\end{remark}

\subsection{Estimates on the approximating solution}
We now proceed to derive necessary estimates on $v_{\e}$ independent of $\e$, via the the maximum principle for parabolic equations in unbounded domains (see, e.g., \cite[Chapter 2]{F3}, \cite[Chapter 7]{PAO}). These properties will be inherited by the limit $v$ when taking $\e \to 0$ and, thus, are crucial for the analysis of the bond value and free boundaries.

\begin{lemma}\label{l1a} For $\e$ sufficiently small,  it holds in $\mathbb R \times[0,T]$:
\[
 0\leqslant  v_{\e}\leqslant \min (1, e^{-\xi}).
\]
\end{lemma}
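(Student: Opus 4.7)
The plan is to obtain the three inequalities $v_\e \ge 0$, $v_\e \le 1$ and $v_\e \le e^{-\xi}$ separately, each by comparing $v_\e$ with an explicit sub- or super-solution of \eqref{2.1}; the full claim then follows by taking the infimum of the two upper bounds. At $t=0$ the estimate is already built into the initial datum: using only the construction of $\psi_\e$ preceding Lemma \ref{lem_psi}, one checks that $0 < \psi_\e(y) \le \min\{1, y\}$ for every $y > 0$. Indeed, the two regimes $y \le 1 - \e_\beta/2$ and $y \ge 1 + \e_\beta/2$ are immediate, while on the transition interval $\psi_\e(y) - 1 = \e_\beta\psi\bigl((y-1)/\e_\beta\bigr) \le y - 1$ and simultaneously $\psi_\e(y) \le 1$. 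Substituting $y = e^{-\xi}$ yields the starting bound on $v_\e(\xi,0)$.

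The three candidate comparison functions are $\tilde v_0 \equiv 0$, $\tilde v_1 \equiv 1$ and $\tilde v_e(\xi) := e^{-\xi}$. For $\tilde v_0$, one has $\mathcal{L}_\e[0] = 0$ and, provided $\e < 1$, $\beta_\e(-1) = 0$, so $\tilde v_0$ is an exact solution; this is precisely where the smallness of $\e$ enters. For $\tilde v_1$, $\mathcal{L}_\e[1] = \delta$ while $\beta_\e(0) = C_0 \ge 2\delta$, hence $\mathcal{L}_\e[\tilde v_1] \le \beta_\e(\tilde v_1 - 1)$ and $\tilde v_1$ is a supersolution. For $\tilde v_e$, the algebraic identities $(\tilde v_e)_{\xi\xi} + (\tilde v_e)_\xi = 0$ and $(\tilde v_e)_\xi + \tilde v_e = 0$ produce $\mathcal{L}_\e[\tilde v_e] = 0$, whereas $\beta_\e(e^{-\xi} - 1) \ge 0$, so $\tilde v_e$ is a supersolution as well.

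To conclude, I set $w = v_\e - \tilde v$ and subtract the respective equations. The mean value theorem rewrites $\sigma_\e^2(v_\e) - \sigma_\e^2(\tilde v)$ and $\beta_\e(v_\e - 1) - \beta_\e(\tilde v - 1)$ as bounded multiples of $w$; crucially, for all three $\tilde v$ the combination $\tilde v_{\xi\xi} + \tilde v_\xi$ vanishes identically, so the $\sigma_\e^2$-difference contributes no zero-order term and $w$ obeys a linear parabolic inequality with smooth, locally bounded coefficients and uniformly elliptic principal part $\tfrac12 \sigma_\e^2(v_\e) \ge \tfrac12 \sigma_H^2$. Since $w$ has the required sign at $t = 0$ and $v_\e$ is bounded by Theorem \ref{existence_approximated}, the standard maximum principle for linear parabolic inequalities on $\mathbb R \times [0,T]$ (see \cite[Chapter 2]{F3}, \cite[Chapter 7]{PAO}) forces $w$ to keep that sign on all of $[0,T]$. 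The step most worth scrutinising is the comparison against $\tilde v_e = e^{-\xi}$, which itself blows up as $\xi \to -\infty$; however, the boundedness of $v_\e$ confines the set $\{v_\e > e^{-\xi}\}$ to a half-line $\xi \ge \xi_0(t)$ in each time slice, so the growth of $\tilde v_e$ never enters the maximum principle and no Phragm\'en--Lindel\"of weight is needed.
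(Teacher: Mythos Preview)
Your argument is correct and, for the two upper bounds $v_\e\le 1$ and $v_\e\le e^{-\xi}$, it coincides with the paper's proof almost verbatim: the paper also sets $w=v_\e-1$ (resp.\ $w=v_\e-e^{-\xi}$), rewrites the $\beta_\e$-difference as a nonnegative quotient times $w$, and invokes the maximum principle. The genuine difference is in the lower bound $v_\e\ge 0$. You compare directly with the constant $0$, observing that $\beta_\e(-1)=0$ once $\e<1$ and that the mean-value coefficient $\beta_\e'(\theta)$ is bounded because $v_\e$ is bounded; your unifying remark that $\tilde v_{\xi\xi}+\tilde v_\xi\equiv 0$ for all three barriers is what makes the $\sigma_\e$-nonlinearity disappear and lets this direct comparison go through. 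The paper instead introduces an auxiliary cutoff $h$ with $h(y)=y$ for $y\le\tfrac12(1-\e)$ and $h(y)=\tfrac12$ for $y\ge\tfrac12(1+\e)$, sets $w=h(v_\e)$, and uses the disjointness of the supports of $h'$ and $\beta_\e(\cdot-1)$ to annihilate the penalty term outright before applying the maximum principle. Your route is more economical and treats the three inequalities uniformly; the paper's cutoff avoids any appeal to boundedness of $\beta_\e'$ but at the price of an extra construction. Your explicit localisation of the set $\{v_\e>e^{-\xi}\}$ to a right half-line is also a point the paper passes over in silence.
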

\begin{proof}
Recall that we have introduced a smooth cut-off function $\psi$ in the beginning of this section. Define a function $h$ as $h(y):= \e \psi(\frac{y-\frac12}{\e})+\frac12$. Then, we see that $h(y)=\frac12$ for $y \geqslant \frac12(1+\e)$; $h(y)=y$ for $y\leqslant \frac12(1-\e)$ and $ 0 \leqslant h'(y) \leqslant 1,\, h''(y)\leqslant 0$ for $y \in \mathbb R$.  Thus, it holds that $h(y)\geqslant 0$ if and only if $y \geqslant 0$. Furthermore, one can directly check that $|\frac{yh'(y)}{h(y)}|$ is bounded. Let $w=h(v_\e)$ and we have that 
$$
\mathcal L_\e[w]=h'(v_\e)\beta_\e(v_\e-1)+\frac12 \sigma^2_\e(v_\e)h''(v_\e)(\frac{\partial v_\e}{\partial \xi})^2+\delta(w-h'(v_\e)v_\e),
$$
which can be rewritten as 
$$
\mathcal L_\e[w]-\delta(1-\frac{v_\e h'(v_\e)}{h(v_\e)})w=h'(v_\e)\beta_\e(v_\e-1)+\frac12 \sigma^2_\e(v_\e)h''(v_\e)(\frac{\partial v_\e}{\partial \xi})^2.
$$
Since $\beta_\e(v_\e-1)=0$ when $v_\e<1-\e$ and $h'(v_\e)=0$ when $v_\e>\frac12(1+\e)$, we see that $h'(v_\e)\beta_\e(v_\e-1)=0$ if $\e$ is sufficiently small. Noting that $h''\leqslant 0$, it holds that $\mathcal L_\e[w]-\delta(1-\frac{v_\e h'(v_\e)}{h(v_\e)})w \leqslant 0$. As the coefficient of zeroth order term is bounded, one can apply maximum principle to get that $w \geqslant 0$, which is equivalent to $v_\e\geqslant0$.

Next, set $w=v_\e -1$. Then, $w$ verifies 
\begin{equation*}
\begin{split}
\mathcal L_{\e} [w]&=\beta_{\e}(w)-\delta=\frac{\beta_{\e}(w)-\beta_{\e}(0)}{w}w+\beta_{\e}(0)-\delta.
\end{split}
\end{equation*}
From the definition of $\beta_{\e}$, it holds that $\beta_\e(0)=C_0\geqslant 2\delta $. Hence, this leads to $w\leqslant 0$ according again to the maximum principle.	

Finally, Let $w=v_\e -e^{-\xi}$. Then, it holds that 
$$
\mathcal L_\e[w]=\beta_{\e}(v_\e-1)=\frac{\beta_\e(v_\e-1)-\beta_\e(e^{-\xi}-1)}{w}w +\beta_\e(e^{-\xi}-1).
$$
Noting that $w(\xi,0)\leqslant 0$ and $\beta_{\e}(e^{-\xi}-1)\geqslant 0$, we deduce that $w\leqslant 0$ according to the maximum principle.	

\end{proof}

\begin{lemma}\label{l2} It holds in $\Omega_T$:
\[
 -(1+\e_\beta)e^{\delta t}\leqslant  \frac{\pr v_{\e}}{\pr \xi} < 0.
\]
\end{lemma}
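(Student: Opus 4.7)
My plan is to differentiate \eqref{2.1} with respect to $\xi$ and treat $w:=\partial_\xi v_\e$ as the solution of the linear parabolic equation
\[
-\partial_t w + \tfrac12 \sigma_\e^2(v_\e)\,\partial_\xi^2 w + \bigl(\tfrac12 \sigma_\e^2(v_\e)+\delta\bigr)\partial_\xi w + c(\xi,t)\,w = 0,
\]
where $c = \sigma_\e\sigma_\e'(v_\e)(\partial_\xi^2 v_\e + w)+\delta-\beta_\e'(v_\e-1)$. Although this is a smooth linear problem for each fixed $\e$, the coefficient $c$ involves $\sigma_\e'$ of size $1/\e$, so $\e$-uniform bounds will demand a structural cancellation.

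\textbf{Upper bound.} I would first establish monotonicity $w\le 0$ by translation invariance: since $\sigma_\e(v_\e)$ has no explicit $\xi$-dependence, for any $h>0$ the function $v_\e(\xi+h,t)$ solves \eqref{2.1} with initial condition $\psi_\e(e^{-\xi-h})\le \psi_\e(e^{-\xi}) = v_\e(\xi,0)$ (since $\psi_\e$ is non-decreasing). A comparison principle for the quasilinear problem (available thanks to smoothness of $\sigma_\e^2$) yields $v_\e(\xi+h,t)\le v_\e(\xi,t)$, and dividing by $h$ and sending $h\to 0^+$ gives $w\le 0$. Strict negativity $w<0$ then follows from the strong maximum principle applied to the linear PDE for $w$: a zero of $w$ at some $(\xi_0,t_0)$ with $t_0>0$ would force $w\equiv 0$ on $\mathbb{R}\times[0,t_0]$, contradicting $w(\xi,0)=-e^{-\xi}\psi_\e'(e^{-\xi})<0$ for $\xi>-\log(1+\e_\beta/2)$.

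\textbf{Lower bound.} The initial value satisfies $w(\xi,0)=-e^{-\xi}\psi_\e'(e^{-\xi})\ge -(1+\e_\beta)$ by Lemma \ref{lem_psi}(i) with $y=e^{-\xi}$. The key structural observation is that at any interior minimum of $w(\cdot,t)$ in $\xi$, $\partial_\xi w=0$, hence $\partial_\xi^2 v_\e = \partial_\xi w = 0$; this removes the offending $\sigma_\e'\partial_\xi^2 v_\e$ piece and gives
\[
c \,=\, \sigma_\e\sigma_\e'(v_\e)\,w + \delta - \beta_\e'(v_\e-1)\,\le\, \delta,
\]
because $w\le 0$, $\sigma_\e\sigma_\e'\ge 0$, $\beta_\e'\ge 0$. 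Hence at such a minimum $cw\ge \delta w$ (both sides negative), and the PDE yields $\partial_t w \ge \tfrac12\sigma_\e^2\partial_\xi^2 w + \delta w \ge \delta w$. I would make this rigorous by a perturbation/barrier argument: fix $\eta>0$ and $\mu>\delta$ and set $\tilde z(\xi,t):=w(\xi,t)+(1+\e_\beta)e^{\delta t}+\eta e^{\mu t}$, so $\tilde z(\cdot,0)\ge \eta$. Assume for contradiction that $\tilde z$ first vanishes at $(\xi_*,t_*)$ with $t_*>0$; then $\partial_t\tilde z\le 0$, $\partial_\xi\tilde z=0$, $\partial_\xi^2\tilde z\ge 0$ there, and the previous inequality rewrites as
\[
\partial_t\tilde z \,\ge\, \delta\tilde z + \eta(\mu-\delta)e^{\mu t_*} \,=\, \eta(\mu-\delta)e^{\mu t_*} \,>\,0,
\]
a contradiction. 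Sending $\eta\to 0$ produces $w\ge -(1+\e_\beta)e^{\delta t}$.

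\textbf{Main obstacle.} The hard part is precisely the $\e$-dependence (of order $1/\e$) of the zero-order coefficient $c$, which blocks any direct max-principle estimate uniform in $\e$. Overcoming it relies on the identity $\partial_\xi^2 v_\e=\partial_\xi w$ at an interior minimum of $w$, combined with the sign properties $\sigma_\e'\ge 0$, $\beta_\e'\ge 0$, producing $c\le \delta$ at the critical point and closing the Gronwall-type inequality $m'(t)\ge\delta m(t)$ for $m(t)=\min_\xi w(\cdot,t)$. A minor additional care is needed because $\Omega_T$ is unbounded: one must show that the infimum of $\tilde z$ is attained at a finite $\xi_*$, which follows from $w\to 0$ as $|\xi|\to\infty$, a consequence of interior gradient estimates for $v_\e$ combined with the sandwich $0\le v_\e\le\min(1,e^{-\xi})$ of Lemma \ref{l1a}.
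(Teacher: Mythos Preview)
Your proposal is correct and, for the lower bound, amounts to a fleshed-out version of the paper's argument. The paper simply differentiates \eqref{2.1} to obtain
\[
\mathcal{L}_\e\Big[\frac{\pr v_\e}{\pr \xi}\Big]=\big[-\sigma_\e\sigma_\e'(v_\e)(\partial_\xi^2 v_\e+\partial_\xi v_\e)+\beta_\e'(v_\e-1)\big]\frac{\pr v_\e}{\pr \xi},
\]
checks $-(1+\e_\beta)\le w(\cdot,0)\le 0$, and invokes the maximum principle in unbounded domains together with the strong maximum principle. What your barrier argument makes explicit---and the paper leaves implicit---is the structural cancellation: once $w\le 0$ is known, the piece $\sigma_\e\sigma_\e'\,\partial_\xi w\cdot w$ can be absorbed into the drift, so the effective zero-order coefficient is $\delta+\sigma_\e\sigma_\e'\,w-\beta_\e'\le\delta$, which is exactly why the growth rate is the $\e$-independent $e^{\delta t}$. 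Your observation that $\partial_\xi^2 v_\e=\partial_\xi w=0$ at an interior minimum of $w$ is the pointwise version of this same rearrangement. For the upper bound you take a different route (translation invariance plus comparison for the quasilinear problem) instead of the paper's direct linear maximum principle on the differentiated equation; both are valid and yield the same conclusion.

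One small technical point: your justification that the infimum of $\tilde z$ is attained relies on $w\to 0$ as $|\xi|\to\infty$. For $\xi\to+\infty$ this follows from $0\le v_\e\le e^{-\xi}$ and interior estimates, but for $\xi\to-\infty$ Lemma~\ref{l1a} only gives $0\le v_\e\le 1$, which does not force gradient decay, and Lemma~\ref{lem_lower_bound} is not yet available. This is easily repaired either by adding a coercive weight such as $\eta'\,e^{\alpha t}\cosh(\xi/R)$ and letting $R\to\infty$, or---as the paper does---by invoking the maximum principle for bounded solutions in unbounded domains directly.
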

\begin{proof}
Differentiating \eqref{2.1}, it comes
$$
{\mathcal L}_\e\Big[ \frac{\pr v_{\e}}{\pr \xi}
\Big]=-\s_\e(v_\e)\,\s'_\e(v_\e)\frac{\pr v_{\e}}{\pr \xi} \,( \frac{\pr^2v_{\e}}{\pr \xi^2}+ \frac{\pr v_{\e}}{\pr \xi})+\beta'_\e(v_\e -1)\frac{\pr v_{\e}}{\pr \xi}.
$$
At $t=0,$ $ \frac{\pr v_{\e}}{\pr \xi}= {\color{black} -e^{-\xi}\psi'_{\e}(e^{-\xi})}$, which lies between $-(1+\e_\beta)$ and $0$ from the proof of Lemma \ref{lem_psi}. By the maximum principle, one can deduce that $ -(1+\e_\beta)e^{\delta t}\leqslant  \frac{\pr v_{\e}}{\pr \xi} \leqslant 0$. Furthermore, the strict inequality in $\Omega_T$ holds due to strong maximum principle.   		
\end{proof}

\begin{lemma}\label{l2a}  It holds in $\Omega_T$:
\[
 1\geqslant \frac{\pr v_{\e}}{\pr \xi} + v_{\e} > 0.
\]
\end{lemma}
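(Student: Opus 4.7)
The upper bound $\partial_\xi v_\e + v_\e \le 1$ is immediate (in fact strict): by Lemma~\ref{l1a} we have $v_\e \le 1$, and by Lemma~\ref{l2} we have $\partial_\xi v_\e < 0$, so summing gives $\partial_\xi v_\e + v_\e < 1$ throughout $\Omega_T$. All of the real work is therefore in proving the lower bound, and I will write $w := v_\e + \partial_\xi v_\e$ throughout.

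The plan is to derive a linear parabolic equation for $w$ and apply the parabolic minimum principle. The key starting observation is the factorisation $\mathcal{L}_\e = -\partial_t + \bigl(\tfrac{1}{2}\sigma_\e^2(v_\e)\partial_\xi + \delta\bigr)\circ D$ with $D := \partial_\xi + 1$, so that $Dv_\e = w$. Applying $D$ to $\mathcal{L}_\e[v_\e] = \beta_\e(v_\e-1)$ and using that $D$ commutes with $\partial_t$ and $\partial_\xi$, a direct computation produces
\begin{equation*}
-w_t + \tfrac{1}{2}\sigma_\e^2 w_{\xi\xi} + \bigl(\tfrac{1}{2}\sigma_\e^2 + \delta + \sigma_\e\sigma_\e'(v_\e)\,\partial_\xi v_\e\bigr) w_\xi + \delta w = \beta_\e(v_\e-1) + \beta_\e'(v_\e-1)\,\partial_\xi v_\e.
\end{equation*}
At $t=0$ the explicit formula $v_\e(\xi,0)=\psi_\e(e^{-\xi})$ gives $w(\xi,0) = \psi_\e(e^{-\xi}) - e^{-\xi}\psi_\e'(e^{-\xi})$, which by Lemma~\ref{lem_psi}(ii) lies in $[0,1]$; in particular $w(\cdot,0) \ge 0$.

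To apply the minimum principle I would first remove the unfavourably-signed zero-order coefficient $+\delta$ via the substitution $w = e^{\delta t}\hat w$, reducing the above to a forward parabolic equation $\hat w_t = \tfrac{1}{2}\sigma_\e^2 \hat w_{\xi\xi} + B\,\hat w_\xi + f$ with $\hat w(\cdot,0) \ge 0$ and source $f = -e^{-\delta t}\bigl(\beta_\e(v_\e-1) + \beta_\e'(v_\e-1)\,\partial_\xi v_\e\bigr)$.

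The main obstacle will be that $f$ has no definite sign: $\beta_\e \ge 0$, while $\beta_\e'\,\partial_\xi v_\e \le 0$ by Lemma~\ref{l2} and the monotonicity of $\beta_\e$. I would argue by contradiction: if $\hat w$ attains a negative infimum at some interior point $(\xi_0,t_0)$, then the minimum conditions $\hat w_\xi = 0$, $\hat w_{\xi\xi} \ge 0$, $\hat w_t \le 0$ combined with the identity $\partial_\xi v_\e = w - v_\e$, which at $(\xi_0,t_0)$ gives $\partial_\xi v_\e \le -v_\e$, together with the convexity estimate $\beta_\e(y) + (1-v_\e)\beta_\e'(y) \le \beta_\e(0) = C_0$ for $y = v_\e - 1 \le 0$ (coming from $\beta_\e'' \ge 0$ and $v_\e \le 1$), should combine with the normalisation $C_0 \ge 2\delta$ to contradict the equation at $(\xi_0,t_0)$. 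Once the non-strict inequality $w \ge 0$ is in hand, the strong maximum principle applied to the linear equation for $w$ upgrades this to the strict bound $w > 0$ throughout $\Omega_T$, since the initial datum $w(\cdot,0)$ is not identically zero.
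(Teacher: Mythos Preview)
Your derivation of the equation for $w=v_\e+\partial_\xi v_\e$ is correct and coincides (after rewriting $\partial_\xi v_\e=w-v_\e$) with the paper's identity
\[
\mathcal L_\e[w]+\sigma_\e\sigma_\e'(v_\e)\,\partial_\xi v_\e\,w_\xi=\beta_\e'(v_\e-1)\,w-\bigl(\beta_\e'(v_\e-1)\,v_\e-\beta_\e(v_\e-1)\bigr).
\]
The upper bound and the initial inequality $w(\cdot,0)\geqslant 0$ are also fine. The gap is in your contradiction step for the lower bound.

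Once you substitute $w=e^{\delta t}\hat w$, the parameter $\delta$ disappears from the equation; the normalisation $C_0\geqslant 2\delta$ then has no leverage, and your convexity estimate based at $0$, namely $\beta_\e(y)-y\beta_\e'(y)\leqslant C_0$, is too weak to force $f>0$ at a putative negative minimum. Indeed, if the minimum lies where $v_\e\leqslant 1-\e$, then $\beta_\e=\beta_\e'=0$ and $f=0$ identically near $(\xi_0,t_0)$; your pointwise argument yields only $\hat w_t=\hat w_{\xi\xi}=0$, not a contradiction. Even where $\beta_\e'>0$, combining your inequalities only gives $\beta_\e'(1-w)\leqslant C_0$, which is no obstruction.

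The paper avoids this by \emph{not} removing the zero-order term, but instead absorbing the part $\beta_\e'(v_\e-1)\,w$ of the right-hand side into the operator. The resulting zero-order coefficient $\beta_\e'-\delta\geqslant -\delta$ is bounded below, so the standard maximum principle applies once the remaining source $\beta_\e'(v_\e-1)\,v_\e-\beta_\e(v_\e-1)$ is shown to be nonnegative. This is where the sharper convexity estimate is needed: expanding $\beta_\e$ at $-\e$ (where $\beta_\e(-\e)=0$) rather than at $0$ gives $\beta_\e(y)\leqslant (y+\e)\beta_\e'(y)$, hence
\[
\beta_\e'(v_\e-1)\,v_\e-\beta_\e(v_\e-1)\geqslant (1-\e)\,\beta_\e'(v_\e-1)\geqslant 0,
\]
and the minimum principle then yields $w\geqslant 0$ directly, with the strong maximum principle upgrading to $w>0$.
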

\begin{proof}
By Lemma \ref{l1a} and \ref{l2}, we have the first inequality of the lemma. Then,
set $w = \frac{\pr v_{\e}}{\pr \xi} + v_{\e}$, $w(\xi,0)=-e^{-\xi}\psi'_\e(e^{-\xi})+\psi_\e(e^{-\xi})$. It follows from Lemma \ref{lem_psi} that $w \geqslant 0$ at $t=0$.  Also, $w$ verifies
\begin{eqnarray}\label{ww1}
{\mathcal L}_\e[w]  +
  \s_\e (v_{\e}) 
 \s_\e'(v_{\e})\, \frac{\pr v_{\e}}{\pr \xi} \frac{\pr  w}{\pr \xi}  = \beta_\e'(v_{\e}-1) (w-v_{\e})+\beta_\e (v_{\e} -1).
\end{eqnarray}
Using Taylor expansion of $\beta_\e(-\e)$ at $y$, one has that
$$ 0 =\beta_\e(-\e) =\beta_\e(y)-\beta'_\e(y)(y+\e)+\frac
1{2}\beta_\e''(\theta)(y+\e)^2.$$
That is,
$$\beta_\e(y)-(y+\e)\beta_\e'(y)\leqslant 0.$$
Replacing $y$ by $v_{\e}(\xi)-1$  in the above formula, we have,
$$\beta_\e(v_\e-1)-(v_{\e}-1+\e)\beta_\e'(v_\e-1)\leqslant 0.$$
Thus, \eqref{ww1} reads
\begin{eqnarray*}\label{ww0}
&& -{\mathcal L}_\e[w]  -
 \s_\e(v_\e) 
 \s_\e'(v_\e)\,\frac{\pr v_{\e}}{\pr \xi} \frac{\pr w}{\pr \xi}+\beta_\e'(v_\e-1) w\\
 &&=\beta_\e'(v_\e-1) v_{\e}-\beta_\e(v_\e-1)\geqslant
  v_{\e}\beta_\e'(v_\e-1)-(v_{\e}-1+\e)\beta_\e'(v_\e-1)=(1-\e)\beta_\e'(v_\e-1)\geqslant
 0.
\end{eqnarray*}
By the strong maximum principle, $w>0$ in $\Omega_T$.
 \end{proof}

	\begin{lemma}\label{l3b} For sufficiently small $\e$,  it holds in $\mathbb R \times[0,T]$:
		\[
		\frac{\pr^2 v_{\e}}{\pr \xi^2} + \frac{\pr v_{\e}}{\pr \xi} \leqslant 0.
		\]
	\end{lemma}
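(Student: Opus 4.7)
The plan is to apply the parabolic maximum principle to $w := \partial_\xi^2 v_\e + \partial_\xi v_\e$ on the strip $\mathbb R \times [0, T]$. First I verify the initial inequality: using $v_\e(\xi,0) = \psi_\e(e^{-\xi})$, a direct computation gives
\begin{equation*}
w(\xi, 0) \;=\; e^{-2\xi}\,\psi_\e''(e^{-\xi}).
\end{equation*}
By construction $\psi'' \leq 0$ on the whole real line ($\psi$ is constant on $[1/2, \infty)$, linear on $(-\infty, -1/2]$, and $\psi'' \leq 0$ on $[-1/2, 1/2]$), hence $\psi_\e'' \leq 0$ and $w(\xi, 0) \leq 0$, with compact support in $\xi$.

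Second, I derive a parabolic equation for $w$. Setting $u := \partial_\xi v_\e$ and $f(v) := \tfrac12 \sigma_\e^2(v)$, I differentiate $\mathcal L_\e[v_\e] = \beta_\e(v_\e - 1)$ once in $\xi$, then once more, and add the two resulting identities so as to repackage the emerging second derivatives of $v_\e$ into $\partial_\xi w$ and $\partial_\xi^2 w$. The outcome is
\begin{equation*}
-\partial_t w + f(v_\e)\,\partial_\xi^2 w + \bigl(f(v_\e) + 2 f'(v_\e)\,u + \delta\bigr)\partial_\xi w + c(\xi,t)\,w \;=\; \beta_\e''(v_\e - 1)\,u^2,
\end{equation*}
with $c(\xi,t) := f'(v_\e)\,w + f''(v_\e)\,u^2 + \delta - \beta_\e'(v_\e - 1)$. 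The right-hand side is non-negative because $\beta_\e''\geq 0$.

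Third, I apply a shifted maximum-principle argument. Because $v_\e \in C^\infty(\mathbb R \times [0, T])$ (Theorem \ref{existence_approximated}) and $v_\e, u$ are bounded with prescribed limits at $\xi \to \pm\infty$ (Lemmas \ref{l1a}, \ref{l2}), parabolic regularity forces $w$ to be bounded on the strip and to decay to $0$ as $|\xi|\to\infty$. Consequently $c$ is bounded above by some finite constant $K = K(\e)$. Setting $\tilde w := e^{-(K+1)t} w$, one checks that $\tilde w$ satisfies the analogous equation with effective zero-order coefficient $c - (K+1) \leq -1$ and non-negative right-hand side. Together with $\tilde w(\xi, 0) \leq 0$ and $\tilde w \to 0$ at infinity, the classical parabolic maximum principle on the unbounded strip $\mathbb R \times [0,T]$ gives $\tilde w \leq 0$, hence $w \leq 0$.

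The main obstacle is the nonlinear dependence of $c$ on $w$ through the term $f'(v_\e)\,w$. I handle it by exploiting that, for fixed $\e > 0$, $v_\e$ is a smooth classical solution and $w$ is a priori bounded, so $c$ can be viewed as a bounded coefficient in $(\xi, t)$; the exponential shift then absorbs the potentially positive sign of $c$ and reduces the matter to a standard application of the parabolic maximum principle.
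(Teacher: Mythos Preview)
Your proof is correct in its essentials and takes a genuinely different route from the paper. The paper does not differentiate the equation twice; instead it exploits the identity
\[
\tfrac12\sigma_\e^2(v_\e)\big(\partial_\xi^2 v_\e+\partial_\xi v_\e\big)=\partial_t v_\e-\delta(\partial_\xi v_\e+v_\e)+\beta_\e(v_\e-1)
\]
(read off directly from $\mathcal L_\e[v_\e]=\beta_\e(v_\e-1)$) and applies the maximum principle to the right-hand side, call it $W$. The key simplification is a case split based on Remark~\ref{r1}: on $\{v_\e<1-2\e\}$ one has $\beta_\e\equiv 0$, and on $\{v_\e\geqslant 1-2\e\}$ one has $\sigma_\e'\equiv 0$. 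In each region only first-order differentiation of the equation is needed, and one obtains $\mathcal L_\e[W]+(\text{bounded})W\geqslant 0$ without ever producing $f''(v_\e)$ or the quadratic term $f'(v_\e)w$. Your direct approach is conceptually simpler but pays for it with more terms and a zero-order coefficient $c$ that depends on $w$ itself; you handle this correctly by freezing $c$ as a bounded function of $(\xi,t)$ (legitimate, since $v_\e\in C^{2+\alpha,1+\alpha/2}$ gives an a priori bound on $w$) and using an exponential shift.

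One point to tighten: you assert that $w\to 0$ as $|\xi|\to\infty$, but at $\xi\to -\infty$ this is not obvious (there $v_\e$ is only known to lie between $1-(\e\vee\e_\beta)e^{\delta t}$ and $1$, and the equation has a nontrivial source $\beta_\e(v_\e-1)$). You do not actually need decay: boundedness of $\tilde w$ together with bounded coefficients is enough for the parabolic maximum principle on $\mathbb R\times[0,T]$ (see \cite[Chapter~2]{F3}, \cite[Chapter~7]{PAO}, as invoked throughout Section~4). Replace the decay claim by boundedness and the argument goes through unchanged.
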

	\begin{proof}
		At $t=0,$
		$\frac{\pr^2 v_{\e}}{\pr \xi^2} + \frac{\pr v_{\e}}{\pr \xi}=e^{-\xi}\psi''_\e(e^{-\xi})$ is non-positive. Now, consider the function $w = \frac{\pr v_{\e}}{\pr t}-\delta\big(\frac{\pr v_{\e}}{\pr \xi} + v_{\e}\big)+\beta_\e(v_\e-1)$. From the definition of $H_\e$ and $\beta_\e$, it is easy to see that $\sigma_\e(v_\e)=\sigma_L$ when $v_\e >\gamma$ and $\dyc{\beta_\e(v_\e-1)}=0$ when $v_\e<1-\e$. Thus, we divide the space into two parts $\{v_\e <1-2\e\}$ and $\{ v_\e\geqslant 1-2\e\}$\\[1mm]
			\noindent{\bf Case 1: $v_\e <1-2\e$. }
			In this case, we see that $\beta_\e \equiv 0$. Then, it holds that 
			\begin{eqnarray}\nonumber
				&&{\mathcal L}_\e[w] + \s_\e(v_\e) 
				\s_\e'(v_\e) \Big (\frac{\pr^2 v_{\e}}{\pr \xi^2}+ \frac{\pr v_{\e}}{\pr \xi}\Big) \Big(\frac{\pr v_{\e}}{\pr t} -\delta \frac{\pr v_{\e}}{\pr \xi}\Big) \\
				&&= {\mathcal L}_\e[w]+\frac{2\s_\e'(v_\e)}{\s_\e(v_\e)} ( w +\delta v_{\e}) w =0. \label{c1}
			\end{eqnarray}
			\noindent{\bf Case 2: $ v_\e\geqslant 1-2\e$.}   For sufficiently small $\e$, we have that $\sigma_\e\equiv \sigma_L$. Then, it holds that 
			$$
			\mathcal L_\e[\frac{\pr v_\e}{\pr t}]=\beta_\e' \frac{\pr v_\e}{\pr t},\mathcal L_\e[\frac{\pr v_\e}{\pr \xi}]=\beta_\e' \frac{\pr v_\e}{\pr \xi},
			$$
			and
			$$
			\mathcal L_\e[\beta_\e]=\beta_\e'\beta_\e+\frac12 \sigma^2_\e\beta_\e''\left(\frac{\pr v_\e}{\pr \xi}\right)^2+\delta \beta_\e-\delta \beta'_\e v_\e,
			$$
			where  we denote $\beta_{\e}= \beta(v_\e -1)$, $\beta'_{\e}= \beta'(v_\e -1)$ and $\beta''_{\e}= \beta''(v_\e -1)$ for simplicity of the notations. Combining above equations, we have that 
			$$
			\mathcal L_\e[w]-\beta_\e' w=\frac12 \sigma^2_\e\beta_\e''\left(\frac{\pr v_\e}{\pr \xi}\right)^2\geqslant0,
			$$
			where the last inequality is due to the fact that $\beta_\e$ is convex.

			Combining these two cases, it holds that 
			$$
			\mathcal L_\e[w]+\Big( \frac{2\s_\e'(v_\e)}{\s_\e(v_\e)} ( w +\delta v_{\e})1_{\{ v<1-2\e\}}-\beta_\e' 1_{\{v\geqslant1-2\e\}} \Big)w \geqslant 0.
			$$
			Then, by the maximum principle, $w\leqslant 0$.
		
	\end{proof}

\begin{lemma}\label{l3} It holds in $\mathbb R \times(0,T]$:
\[
 \frac{\pr v_{\e}}{\pr t} < 0.
\]
\end{lemma}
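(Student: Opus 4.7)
The plan is to derive a linear parabolic equation for $w := \frac{\partial v_\e}{\partial t}$ by differentiating \eqref{2.1} in $t$, establish the sign $w(\cdot,0)\leqslant 0$ directly from the regularized initial data, and then appeal to the maximum principle for parabolic equations on unbounded domains exactly as in the previous lemmas. Since $v_\e\in C^\infty(\mathbb R\times[0,T])$ by Theorem \ref{existence_approximated}, differentiating \eqref{2.1} in $t$ yields
\begin{equation*}
\mathcal L_\e[w] + \Big(\sigma_\e(v_\e)\sigma'_\e(v_\e)\big(\tfrac{\partial^2 v_\e}{\partial\xi^2}+\tfrac{\partial v_\e}{\partial\xi}\big) - \beta'_\e(v_\e-1)\Big) w = 0,
\end{equation*}
a linear parabolic equation for $w$ whose zeroth-order coefficient is bounded on $\mathbb R\times[0,T]$ (for each fixed $\e>0$) and whose sign is actually non-positive, thanks to Lemma \ref{l3b}, $\sigma'_\e\geqslant 0$, and $\beta'_\e\geqslant 0$.

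The main step is to show that $w(\xi,0)\leqslant 0$ for every $\xi\in\mathbb R$. Evaluating the PDE at $t=0$ with $v_\e(\xi,0)=\psi_\e(e^{-\xi})$ and using $\frac{\partial^2 v_\e}{\partial\xi^2}+\frac{\partial v_\e}{\partial\xi}\big|_{t=0}= e^{-2\xi}\psi''_\e(e^{-\xi})$, one gets
\begin{equation*}
w(\xi,0) = \tfrac12\sigma_\e^2(v_\e(\xi,0))\, e^{-2\xi}\psi''_\e(e^{-\xi}) + \delta\big[\psi_\e(e^{-\xi}) - e^{-\xi}\psi'_\e(e^{-\xi})\big] - \beta_\e(\psi_\e(e^{-\xi})-1).
\end{equation*}
I would split the real line into three regions according to $y:=e^{-\xi}$. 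For $y\leqslant 1-\e_\beta/2$ one has $\psi_\e(y)=y$, $\psi'_\e(y)=1$, $\psi''_\e(y)=0$, so the first two terms vanish and the third is $-\beta_\e(y-1)\leqslant 0$. For $y\geqslant 1+\e_\beta/2$ one has $\psi_\e(y)=1$, $\psi'_\e(y)=0$, $\psi''_\e(y)=0$, yielding $w(\xi,0)=\delta-\beta_\e(0)=\delta-C_0\leqslant -\delta<0$. The delicate middle region $y\in[1-\e_\beta/2,\,1+\e_\beta/2]$ is the main obstacle: here $\psi''_\e\leqslant 0$ keeps the first term non-positive; the second term is bounded above by $\delta$ via Lemma \ref{lem_psi}(ii); and since $\psi_\e(y)-1\in[-\e_\beta/2,0]$, the monotonicity of $\beta_\e$ combined with the defining relation $\beta_\e(-\e_\beta/2)=\delta$ gives $\beta_\e(\psi_\e(y)-1)\geqslant\delta$. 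The three contributions therefore combine to $w(\xi,0)\leqslant 0+\delta-\delta=0$, completing the initial-time bound.

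With $w(\xi,0)\leqslant 0$ and $w$ bounded (which follows from the boundedness of $v_\e$ and all its relevant derivatives for fixed $\e$, using the equation itself), the weak maximum principle for parabolic equations in $\mathbb R\times[0,T]$ (applied after the usual substitution $\tilde w=e^{-Kt}w$ to neutralise any positive part of the zeroth-order coefficient) yields $w\leqslant 0$ on the whole strip. Finally, since the high-$y$ calculation showed $w(\xi,0)\leqslant -\delta<0$ on the nonempty set $\{\xi\leqslant -\log(1+\e_\beta/2)\}$, $w$ is not identically zero, and the strong maximum principle upgrades the inequality to the strict bound $w<0$ in $\mathbb R\times(0,T]$, which is the conclusion of the lemma.
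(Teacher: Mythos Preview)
Your proof is correct and follows essentially the same approach as the paper: differentiate in $t$, verify $w(\xi,0)\le 0$ from the explicit formula for the regularised initial datum using the defining relation $\beta_\e(-\e_\beta/2)=\delta$, and conclude via the strong maximum principle. The only differences are presentational: you split the initial-time analysis into three regions of $y=e^{-\xi}$ whereas the paper uses two (merging your middle and upper regions), and you additionally observe that the zeroth-order coefficient is non-positive thanks to Lemma~\ref{l3b}, which the paper does not invoke since boundedness alone suffices for the maximum principle.
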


\begin{proof}
Set $w=\frac{\pr v_\e}{\pr t}$. Then, we see that 
\begin{equation} \label{eq_deri_t}
	\mathcal L_\e[w]=-\sigma_\e(v_e) \sigma'_\e(v_e) (\frac{\pr^2 v_\e}{\pr \xi^2}+\frac{\pr v_\e}{\pr \xi})w+\beta'_\e(v_\e -1) w.
\end{equation}
Because $v_\e(\xi,0)=\psi_\e(e^{-\xi})$, we have that 
\begin{equation}
\begin{split}
w(\xi,0)=
\frac{1}{2}\sigma^2_\e(v_\e) e^{-2\xi}\psi''_\e(e^{-\xi})+\delta\big(\psi_{\e}(e^{-\xi})
-e^{-\xi}\psi'_\e(e^{-\xi})\big)-\beta_\e(\psi_{\e}(e^{-\xi})-1).
\end{split}
\end{equation}\label{initial_time_derivative}Since $\psi''_\e(\cdot)\leqslant 0$, the first term is negative. Then, it is easy to check that when $e^{-\xi}<1-\frac{\e_\beta}{2}$, the second term is zero. Hence, $w(\xi,0)\leqslant 0$ in this case. Now, it remains only to check the case $e^{-\xi} \geqslant 1-\frac{\e_\beta}{2}$. From Lemma \ref{lem_psi} and monotonicity of $\beta_\e$, we have that 
$$
\delta(\psi_{\e}(e^{-\xi})-e^{-\xi}\psi'_\e(e^{-\xi}))-\beta_\e(\psi_{\e}(e^{-\xi})-1)\leqslant \delta -\beta_\e(-\frac{\e_\beta}{2}).
$$
According to our choice of $\e_\beta$, we see that the above term is non-positive. Thus, we proved that $w(\xi,0) \leqslant 0$, which yields the desired result thanks to the strong maximum principle.
\end{proof}

\begin{lemma}\label{l3a} There are positive constants $c_1,C_2$ and $C_3$, independent of $\e$, such that it holds in $\mathbb R \times(0,T]$ 
\[
 \frac{\pr v_{\e}}{\pr t} \geqslant -C_3-\frac{C_2}{\sqrt{t}}\exp\Big(
  -c_1\frac{\xi^2}{t}\Big).
\]

\end{lemma}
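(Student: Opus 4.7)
The plan is to exploit the fact that $w:=\partial v_\e/\partial t$ satisfies, by \eqref{eq_deri_t}, a linear parabolic equation $\mathcal L_\e[w]=c_\e w$ with
\[
c_\e(\xi,t):=-\sigma_\e(v_\e)\sigma_\e'(v_\e)\Bigl(\frac{\partial^2 v_\e}{\partial\xi^2}+\frac{\partial v_\e}{\partial\xi}\Bigr)+\beta_\e'(v_\e-1).
\]
Lemma~\ref{l3b} together with the monotonicity of $H_\e$ and $\beta_\e$ (which forces $\sigma_\e',\beta_\e'\geqslant 0$) yields $c_\e\geqslant 0$, and combined with $w\leqslant 0$ from Lemma~\ref{l3} this gives $c_\e w\leqslant 0$, hence $\partial_t w\geqslant \AA w$, where $\AA:=\tfrac12\sigma_\e^2(v_\e)(\partial_{\xi\xi}+\partial_\xi)+\delta(\partial_\xi+1)$ is a parabolic operator with coefficients that are bounded uniformly in $\e$ (diffusion in $[\sigma_H^2/2,\sigma_L^2/2]$). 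The parabolic comparison principle then gives $w(\xi,t)\geqslant\tilde w(\xi,t)$, where $\tilde w$ solves $\partial_t\tilde w=\AA\tilde w$ with the same initial data and admits the Green's-function representation $\tilde w(\xi,t)=\int_{\mathbb R}G(\xi,t;y,0)\,w(y,0)\,dy$.

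The next step is to dissect $w(\cdot,0)$ as given by \eqref{initial_time_derivative}. By Lemma~\ref{lem_psi}(ii) and $\psi_\e-1\leqslant 0$, the last two terms of $w(y,0)$ lie respectively in $[0,\delta]$ and $[0,C_0]$, hence are bounded uniformly in $\e$. The first term $\tfrac12\sigma_\e^2(v_\e)e^{-2y}\psi_\e''(e^{-y})$ is nonpositive and supported on the $O(\e_\beta)$-wide interval around $y=0$ where $(e^{-y}-1)/\e_\beta\in[-1/2,1/2]$. Although its pointwise magnitude is $O(1/\e_\beta)$, the substitution $s=e^{-y}$ reduces $\int\bigl|\tfrac12\sigma_\e^2 e^{-2y}\psi_\e''(e^{-y})\bigr|\,dy$ to a bounded multiple of $\|\psi''\|_{L^1}$, i.e., uniformly bounded in $\e$. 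I would thus record
\[
w(y,0)\geqslant -M_1-\frac{M_2}{\e_\beta}\mathbf{1}_{\{|y|\leqslant M_3\e_\beta\}},
\]
with constants independent of $\e$.

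Finally, I would invoke Aronson-type Gaussian bounds for $G$: since $\AA$ has bounded measurable coefficients with uniform ellipticity, there exist $C_A,c_A>0$ independent of $\e$ such that $G(\xi,t;y,0)\leqslant \tfrac{C_A e^{\delta T}}{\sqrt t}\exp\!\bigl(-c_A(\xi-y)^2/t\bigr)$ on $\mathbb R\times(0,T]$. Convolving with the bounded part of $w(\cdot,0)$ produces the constant contribution $-C_3$ (using $\int G(\xi,t;y,0)\,dy=e^{\delta t}\leqslant e^{\delta T}$). For the concentrated spike part I would split according to $|\xi|$. In the outer regime $|\xi|\geqslant 2M_3\e_\beta$, $(\xi-y)^2\geqslant \xi^2/4$ on the support, and the $1/\e_\beta$ height cancels exactly against the $O(\e_\beta)$ width, producing $C_2 t^{-1/2}\exp(-c_1\xi^2/t)$ directly. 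In the inner regime $|\xi|<2M_3\e_\beta$, the exponential $\exp(-c_1\xi^2/t)$ is bounded below on the natural time scale $t\gtrsim \e_\beta^2$, while in the complementary sub-regime $\sqrt t\leqslant \e_\beta$ one uses $1/\e_\beta\leqslant 1/\sqrt t$ together with the fine estimate $\int_{|y|\leqslant M_3\e_\beta}G\,dy\leqslant C\min(\e_\beta/\sqrt t,1)$ obtained by a Gaussian integration in $y$, to absorb the contribution into the same bound after enlarging $C_2$.

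The main obstacle is the simultaneous unboundedness of $c_\e$ and of the initial-data spike as $\e\to 0$: each would individually spoil a uniform estimate. The resolution hinges on two compensating structures. First, the sign condition $c_\e w\leqslant 0$ lets me discard the amplifying zeroth-order term and reduce to a uniformly parabolic operator whose Green's function obeys Aronson's bound. Second, the uniform $L^1$-control of the spike — the exact cancellation between its $O(1/\e_\beta)$ height and $O(\e_\beta)$ width — forces the heat-kernel convolution to produce constants that do not blow up with $\e$. The delicate technical point is the bookkeeping in the inner-regime split, where one must carefully match the scales $|\xi|$, $\sqrt t$, and $\e_\beta$ against the Gaussian envelope.
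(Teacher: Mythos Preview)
Your approach is correct in spirit and takes a genuinely different route from the paper's proof.

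\textbf{The paper's argument.} The paper does not work globally. Instead, it first isolates a parabolic ball $B_\rho=\{|\xi|\leqslant\rho,\;0\leqslant t\leqslant\rho^2\}$, with $\rho>0$ independent of $\e$, on which $v_\e>(1+\gamma)/2$; there $\sigma_\e\equiv\sigma_L$ is \emph{constant}, so in $B_\rho$ the equation is a constant-coefficient penalized obstacle (American-option) problem, for which the bound \eqref{2.5} is simply quoted from the literature. Outside $B_\rho$, the singular part of $w(\cdot,0)$ (the $\psi_\e''$ spike) has vanished, so $w(\xi,0)$ is bounded below uniformly in $\e$; the maximum principle applied to \eqref{eq_deri_t} on $\Omega_T\setminus B_\rho$ with the $B_\rho$-boundary values then yields the constant lower bound $-C_3$.

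\textbf{Your argument.} You keep the variable coefficient $\sigma_\e^2(v_\e)$, exploit the sign structure $c_\e w\leqslant 0$ to reduce to $\partial_t w\geqslant\AA w$ with $\AA$ uniformly elliptic, and invoke Gaussian bounds on the Green's function of $\partial_t-\AA$ to convolve against the decomposed initial data.

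\textbf{What each buys.} The paper's localization avoids two technicalities that your route has to confront. First, your Aronson-type bound is for a \emph{non-divergence} operator (writing $\AA$ in divergence form introduces a drift $\partial_\xi(\sigma_\e^2(v_\e))$ that is $O(1/\e)$); in one dimension such bounds do hold with constants depending only on the ellipticity, but this is not the classical Aronson theorem and deserves a reference. Second, in your inner regime $|\xi|<2M_3\e_\beta$ with $\sqrt t<\e_\beta$, the bookkeeping is delicate: the crude bound you sketch gives a contribution of order $1/\e_\beta$, while at points with $|\xi|\sim\e_\beta$ and $t\ll\e_\beta^2$ the target $C_3+C_2t^{-1/2}e^{-c_1\xi^2/t}$ may be only $O(1)$. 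A cleaner way to close this is to note that on the spike's support the coefficient is already $\sigma_L$, so you may use the explicit constant-coefficient heat kernel there --- which is precisely how the paper sidesteps the issue. Conversely, your approach is more self-contained once these points are handled: it does not rely on an external American-option estimate. (Minor slip: the term $-\beta_\e(\psi_\e-1)$ lies in $[-C_0,0]$, not $[0,C_0]$; this is harmless since only boundedness is used.)
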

\begin{proof}
Since $v_{\e}(0,0) = 1 >\gamma$, and by H\"older continuity of
the solution (see Theorem \ref{existence_approximated}), there exists a $\rho>0$,
independent of $\e$, such that
\[
v_{\e}(x,t) > (1+\gamma)/2\; \mbox{ in } B_\rho,
\]
where
$$B_\rho =\left\{(\xi,t),\, |\xi|\leqslant \rho,\; 0\leqslant t\leqslant \rho^2 \right\}.$$
Thus, for $\e$ small enough such that $\e<(1-\gamma)/2$, $\s_\e \equiv \s_L$ in $B_\rho$. 
We observe that, in $B_\rho$, the problem is reminiscent of a vanilla
American option, which has a lower estimate (see, e.g., \cite{LHJ})
\ben\label{2.5}
  \frac{\pr v_{\e}}{\pr t} \geqslant -C_2-\frac{C_2}{\sqrt{t}}\exp\Big(
  -c_1\frac{\xi^2}{t}\Big)
  \mbox{ in }B_\rho.
\een
Let us refer to Lemma \ref{l3} for the notation $w = \frac{\pr v_\e}{\pr t}$. From \eqref{initial_time_derivative}, it is easy to verify that $w(\xi,0)$ is uniformly bounded from below on $|\xi|\geqslant\rho$. Combining with \eqref{2.5}, there exists $C_3>0$ such that $w(x,t)\geqslant -C_3$ on $\{|\xi|\geqslant \rho, t=0 \} \cup \{ |\xi|=\rho, 0\leqslant t \leqslant \rho^2\} \cup \{|\xi|\leqslant \rho,t=\rho^2 \}$. The Maximum Principle (see Lemma \ref{l3}) yields that $w(\xi,t)\geqslant -C_3$ in $\Omega_T\setminus B_\rho$. Together with \eqref{2.5}, we get the desired result.
\end{proof}
As an immediate corollary, we have
\begin{lemma}\label{l4} There are positive constants $C_4, C_5$ and $C_6$, independent of $\e$, such that it holds in $\mathbb R \times(0,T]$ 
\[
  -C_4-\frac{C_5}{\sqrt{t}}\exp\Big( -c_1\frac{\xi^2}{t}\Big) \leqslant \frac{\pr^2 v_\e}{\pr \xi^2}\leqslant  C_6.
\]
\end{lemma}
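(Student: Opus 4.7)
The approach is to isolate $\frac{\pr^2 v_\e}{\pr \xi^2}$ from the approximating equation \eqref{2.1} and feed in the uniform estimates on the lower-order quantities that have just been established. This is exactly why the statement is advertised as an \emph{immediate corollary} of Lemma \ref{l3a}: all the delicate parabolic work (maximum principle, Gaussian estimate near the singular point $(0,0)$) has already been done, and what remains is simply algebra on the PDE combined with sign bookkeeping.

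For the upper bound I would invoke Lemma \ref{l3b} and then Lemma \ref{l2}: since $\frac{\pr^2 v_\e}{\pr \xi^2}\leqslant -\frac{\pr v_\e}{\pr \xi}$ and $-\frac{\pr v_\e}{\pr \xi}\leqslant (1+\e_\beta)e^{\delta t}$, we may pick $C_6=2e^{\delta T}$ once $\e$ is small enough that $1+\e_\beta\leqslant 2$. For the lower bound I would rewrite the PDE \eqref{2.1} as
\[
\tfrac12\sigma_\e^2(v_\e)\,\frac{\pr^2 v_\e}{\pr \xi^2} \;=\; \frac{\pr v_\e}{\pr t} \;-\; \tfrac12\sigma_\e^2(v_\e)\,\frac{\pr v_\e}{\pr \xi} \;-\; \delta\Big(\frac{\pr v_\e}{\pr \xi}+v_\e\Big) \;+\; \beta_\e(v_\e-1).
\]
On the right-hand side, Lemma \ref{l3a} supplies the Gaussian lower bound for $\frac{\pr v_\e}{\pr t}$; the second term is non-negative by Lemma \ref{l2}; the third is bounded below by $-\delta$ by Lemma \ref{l2a}; and the penalty term is non-negative by construction of $\beta_\e$. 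Combining these yields
\[
\tfrac12\sigma_\e^2(v_\e)\,\frac{\pr^2 v_\e}{\pr \xi^2} \;\geqslant\; -(C_3+\delta)-\frac{C_2}{\sqrt{t}}\exp\!\Big(-c_1\frac{\xi^2}{t}\Big).
\]

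To conclude, I would divide through by $\tfrac12\sigma_\e^2(v_\e)$, a positive quantity lying in $[\tfrac12\sigma_H^2,\tfrac12\sigma_L^2]$. Because the right-hand side above has no fixed sign, one splits into two cases: wherever it is non-negative, the trivial bound $\frac{\pr^2 v_\e}{\pr \xi^2}\geqslant 0$ is already stronger than what is claimed; wherever it is negative, dividing by the smallest available factor $\tfrac12\sigma_H^2$ yields the most negative (hence valid) lower bound. This produces the stated estimate with $C_4=2(C_3+\delta)/\sigma_H^2$ and $C_5=2C_2/\sigma_H^2$, independent of $\e$. There is no real obstacle here — the only point deserving care is the sign bookkeeping in the last division, which explains why the constants are tied to $\sigma_H^2$ rather than $\sigma_L^2$.
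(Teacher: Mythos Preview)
Your proof is correct and is precisely the argument the paper has in mind: the statement is introduced as an ``immediate corollary'' and no proof is written out, so your algebraic rearrangement of \eqref{2.1} together with Lemmas~\ref{l2}, \ref{l2a}, \ref{l3a}, \ref{l3b} and the non-negativity of $\beta_\e$ supplies exactly the intended details. The sign bookkeeping in the final division is handled correctly.
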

\subsection{The approximating transit boundary}\label{approx_transit}
Let us denote by $\eta_\e(t)$  the approximating transit boundary, which is
implicitely defined by the equation
\ben\label{af}v_{\e}(\eta_{\e}(t),t)=\gamma.\een
We will construct the curve $t \mapsto \eta_\e(t)$ via the Implicit Function Theorem.
To begin with, we give a lower bound for $v_\e$.
Fom Lemma \ref{l1a}, it holds that $v_\e({\xi},t) \leqslant \gamma-\e$ when $\xi  \geqslant \log \frac{1}{\gamma-\e}$. This implies that $\sigma_\e=\sigma_H$ when $\xi \geqslant \log \frac{1}{\gamma-\e}$ and $\eta_\e(t) \leqslant \log \frac{1}{\gamma}$. Then, we give a lower bound for $v_\e$.
\begin{lemma}\label{lem_lower_bound}
	Let $(\tilde K,{\color{black}\tilde \eta^*},\tilde \kappa^*)$ be the solution of \eqref{steady1} as constructed in Theorem \ref{thm_TW} with $\gamma$ replaced by $\tilde \gamma$. Choose $\tilde \gamma$ properly such that $\tilde \eta^*=\log \frac2 {{\gamma}}$. Then, we have that $v_\e \geqslant \tilde K-(\e\vee \e_\beta) e^{\delta t}$ when $\e <\frac\gamma2$.
\end{lemma}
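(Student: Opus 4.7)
The plan is to apply a comparison maximum principle to $W(\xi,t):=v_\e(\xi,t)-\tilde K(\xi)+(\e\vee\e_\beta)e^{\delta t}$ and show $W\ge 0$. First, by Proposition \ref{prop_TW}(iv), the map $\gamma\mapsto\eta^*(\gamma)$ is continuous and strictly decreasing with range $(-\log(c_H/(c_H-1)),+\infty)$; since $\log(2/\gamma)>0$ lies in this range, a unique $\tilde\gamma\in(0,1)$ with $\tilde\eta^*=\log(2/\gamma)$ exists. For the initial data, the piecewise structure of $\psi_\e$ together with Proposition \ref{prop_TW}(iii) (giving $\tilde K\le\min(1,e^{-\xi})$) yields $W(\xi,0)\ge 0$ by a three-case check: on $\{e^{-\xi}\le 1-\e_\beta/2\}$, $v_\e(\xi,0)=e^{-\xi}\ge\tilde K$; on $\{e^{-\xi}\ge 1+\e_\beta/2\}$, $v_\e(\xi,0)=1\ge\tilde K$; and in the transition zone $\psi_\e(e^{-\xi})\ge 1-\e_\beta/2$ while $\tilde K\le 1$, so the deficit of at most $\e_\beta/2$ is absorbed by $(\e\vee\e_\beta)\ge\e_\beta$.

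The core step is to show that $\tilde{\mathcal L}_\e[\tilde K]\ge 0$ almost everywhere, where $\tilde{\mathcal L}_\e$ denotes the linear operator obtained from $\mathcal L_\e$ by freezing the leading coefficient at $\sigma_\e^2(v_\e)$, so that $\tilde{\mathcal L}_\e[v_\e]=\beta_\e(v_\e-1)$ and $\tilde{\mathcal L}_\e[e^{\delta t}]=0$ by direct computation. For $\xi\ge\tilde\eta^*=\log(2/\gamma)$, the hypothesis $\e<\gamma/2$ combined with Lemma \ref{l1a} yields $v_\e\le e^{-\xi}\le\gamma/2<\gamma-\e$, whence $\sigma_\e(v_\e)=\sigma_H$; since $\tilde K$ satisfies exactly the ODE with $\sigma_H$ in this region, $\tilde{\mathcal L}_\e[\tilde K]=0$. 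For $\tilde\kappa^*<\xi<\tilde\eta^*$, $\tilde K$ satisfies the ODE with $\sigma_L$, so
\[
\tilde{\mathcal L}_\e[\tilde K]=\tfrac12\bigl(\sigma_\e^2(v_\e)-\sigma_L^2\bigr)\bigl(\tilde K''+\tilde K'\bigr)\ge 0
\]
because $\sigma_\e\le\sigma_L$ and $\tilde K''+\tilde K'\le 0$ by Proposition \ref{prop_TW}(i). For $\xi\le\tilde\kappa^*$, $\tilde K\equiv 1$ yields $\tilde{\mathcal L}_\e[\tilde K]=\delta>0$. Since $\tilde K\in C^1(\mathbb R)$ with $\tilde K''\in L^\infty(\mathbb R)$, no delta-singularities appear at the interfaces $\xi=\tilde\kappa^*,\tilde\eta^*$.

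By linearity, $\tilde{\mathcal L}_\e[W]=\beta_\e(v_\e-1)-\tilde{\mathcal L}_\e[\tilde K]$, and wherever $W\le 0$ one has $v_\e\le\tilde K-(\e\vee\e_\beta)e^{\delta t}\le 1-\e$, so $\beta_\e(v_\e-1)=0$ and $\tilde{\mathcal L}_\e[W]\le 0$. Substituting $\tilde W:=e^{-\delta t}W$ eliminates the zero-order $\delta f$ term, yielding a parabolic inequality $L[\tilde W]\le 0$ (on $\{\tilde W\le 0\}$) with no zero-order coefficient; combined with the initial bound $\tilde W(\cdot,0)\ge 0$ and the boundedness of $\tilde W$ inherited from $v_\e$ and $\tilde K$, a standard weak parabolic maximum principle on the strip $\mathbb R\times[0,T]$ gives $\tilde W\ge 0$, i.e. the claim. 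The main technical obstacle is justifying the maximum principle on the unbounded strip through the two interfaces where $\tilde K''$ is only bounded; this is handled via a small exponential barrier of the form $\eta\,e^{At+\alpha\xi^2}$ added to $\tilde W$, followed by $\eta\to 0$.
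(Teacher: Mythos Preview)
Your proof is correct and follows essentially the same route as the paper's: define $w=v_\e-\tilde K+(\e\vee\e_\beta)e^{\delta t}$, use $\sigma_\e\le\sigma_2$ (forced by $\tilde\eta^*=\log(2/\gamma)$ together with $v_\e\le e^{-\xi}$ and $\e<\gamma/2$) and $\tilde K''+\tilde K'\le 0$ to get the right sign, check the initial data via $\tilde K\le\min(1,e^{-\xi})$, and conclude by the maximum principle. The only cosmetic difference is that the paper absorbs $\beta_\e(v_\e-1)$ into a nonnegative Lipschitz difference-quotient coefficient of $w$, whereas you observe directly that $\beta_\e(v_\e-1)=0$ on $\{W\le 0\}$; both yield the same comparison.
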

\begin{proof}
	From Proposition \ref{prop_TW} (iv), $\tilde \gamma$ is well defined. We can rewrite that 
	$$
	\frac{1}{2}\sigma_2^2\Big(\frac{d^2 \tilde K}{d \xi^2}+\frac{d \tilde K }{d \xi}\Big)+\delta\Big(\frac{d\tilde K}{d \xi}+\tilde K\Big)=\delta 1_{\{ \xi \leqslant \tilde \kappa^*\}}, 
	$$	
	with $\sigma_2:=\sigma_H 1_{\{ \xi \geqslant \tilde \eta^*\}}+\sigma_L 1_{\{ \xi < \tilde \eta^*\}}$. Let $w=v_\e -(\tilde K-(\e\vee \e_\beta) e^{\delta t})$. Then, it holds that 
	\begin{equation*}
	\begin{split}
	{\mathcal L}_{\e}[w]=\beta_\e({v_\e-1})- \delta 1_{\{ \xi \leqslant \tilde \kappa^*\}}+\frac{1}{2}(\sigma_2^2-\sigma_\e^2)\Big(\frac{d^2 \tilde K}{d \xi^2}+
	\frac{d \tilde K}{d \xi }\Big).
	\end{split}
	\end{equation*}
	Since we choose $\tilde \eta^*=\log \frac{2}{{\gamma}}$, it holds that $\sigma_\e^2 \leqslant \sigma^2_2 $. Combining with the fact that $\frac{d^2 \tilde K}{d \xi^2}+
	\frac{d \tilde K}{d \xi }\leqslant 0 $, we see that the last term on the right hand side is non-positive.
	Since $\tilde K\leqslant \min\{1,e^{-\xi}\}$ (see Proposition \ref{prop_TW} (iii)), 
	$\beta_\e(\tilde K-(\e\vee \e_\beta) e^{\delta t}-1)\leqslant \beta_\e(-\e)=0 $. Thus, 
	$$
	\mathcal L_\e[w]-\frac{\beta_\e(v_\e-1)-\beta_\e(\tilde K-\e\vee\e_\beta-1)}{w}w \leqslant 0.
	$$
	At $t=0$, $v_\e(\xi,0)=\psi_{\e}(e^{-\xi})$. It is easy to see that $\psi_\e(e^{-\xi})=e^{-\xi}$ for $e^{-\xi} \leqslant 1-\frac{\e_\beta}{2}$ and $\psi_\e(e^{-\xi}) \geqslant 1-\frac{\e_\beta}{2}$ for $e^{-\xi} \geqslant 1-\frac{\e_\beta}{2} $. For both cases, we have $v_\e(\xi,0)\geqslant \tilde K(\xi)-\e \vee \e_\beta$. The desired result follows from the maximum principle. 
\end{proof}

\begin{theorem}\label{IFT}
For fixed $\e>0$, there exists an decreasing smooth function $\eta_{\e}(t)$ such that 
\ben\label{initial_eta}
\eta_{\e}(0)= \log(\frac{1}{\gamma}), \quad \tilde \kappa^* < \eta_\e(t) \leqslant \log \frac{1}{\gamma},
\een
and \eqref{af} holds for all $t\in[0,T]$.
\end{theorem}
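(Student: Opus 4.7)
The plan is to apply the Implicit Function Theorem to the equation $F(\xi,t) := v_\e(\xi,t) - \gamma = 0$, using the strict monotonicity of $v_\e$ in $\xi$ established in Lemma~\ref{l2}. The preliminary step is to show that for each fixed $t \in [0,T]$ there is a unique $\xi = \eta_\e(t)$ in the prescribed interval $(\tilde\kappa^*,\log(1/\gamma)]$ at which $v_\e = \gamma$.

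For uniqueness and existence of the crossing, I would combine three ingredients. First, Lemma~\ref{l2} gives $\partial_\xi v_\e < 0$, so $\xi \mapsto v_\e(\xi,t)$ is strictly decreasing, yielding at most one solution. Second, Lemma~\ref{l1a} gives $v_\e(\xi,t) \leqslant e^{-\xi}$, so whenever $\xi > \log(1/\gamma)$ we have $v_\e(\xi,t) < \gamma$; this forces $\eta_\e(t) \leqslant \log(1/\gamma)$. Third, the lower bound from Lemma~\ref{lem_lower_bound} says $v_\e \geqslant \tilde K - (\e \vee \e_\beta)e^{\delta t}$, and $\tilde K \equiv 1$ on $(-\infty, \tilde\kappa^*]$, so for $\e$ sufficiently small and $\xi \leqslant \tilde\kappa^*$ we get $v_\e(\xi,t) \geqslant 1 - (\e \vee \e_\beta)e^{\delta T} > \gamma$. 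By the intermediate value theorem, a unique crossing $\eta_\e(t) \in (\tilde\kappa^*,\log(1/\gamma)]$ exists for every $t \in [0,T]$. The initial value $\eta_\e(0) = \log(1/\gamma)$ follows because $v_\e(\xi,0) = \psi_\e(e^{-\xi})$ and, for $\e$ small, $\gamma < 1 - \e_\beta/2$, so $\psi_\e(\gamma) = \gamma$ and the equation $\psi_\e(e^{-\xi}) = \gamma$ is solved exactly at $\xi = \log(1/\gamma)$.

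Smoothness of the curve is then immediate from the Implicit Function Theorem: $v_\e \in C^\infty(\mathbb{R} \times [0,T])$ by Theorem~\ref{existence_approximated}, and $\partial_\xi v_\e(\eta_\e(t),t) < 0$ by Lemma~\ref{l2}, so $F$ has a nonvanishing $\xi$-derivative along the zero set. Monotonicity follows by differentiating $v_\e(\eta_\e(t),t) = \gamma$:
\begin{equation*}
\eta_\e'(t) = -\frac{\partial_t v_\e(\eta_\e(t),t)}{\partial_\xi v_\e(\eta_\e(t),t)},
\end{equation*}
and since $\partial_t v_\e < 0$ by Lemma~\ref{l3} and $\partial_\xi v_\e < 0$ by Lemma~\ref{l2}, we obtain $\eta_\e'(t) < 0$.

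The only subtle point is verifying that the crossing cannot escape to $-\infty$ as $t$ grows, and this is precisely where the \emph{uniform} (in $\xi$) lower bound provided by the sub-solution $\tilde K$ in Lemma~\ref{lem_lower_bound} is essential; without it, monotone decay of $v_\e$ in $t$ (Lemma~\ref{l3}) could in principle push the level set $\{v_\e = \gamma\}$ arbitrarily far left. Everything else is a standard application of IFT plus sign tracking; there is no additional obstacle.
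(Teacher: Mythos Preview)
Your proof is correct and uses the same ingredients as the paper: the strict monotonicity $\partial_\xi v_\e<0$ from Lemma~\ref{l2}, the upper bound $v_\e\leqslant e^{-\xi}$ from Lemma~\ref{l1a}, the sub-solution bound from Lemma~\ref{lem_lower_bound}, and the sign of $\partial_t v_\e$ from Lemma~\ref{l3}. The only organizational difference is that the paper first applies the Implicit Function Theorem locally at the initial point $(\log(1/\gamma),0)$ and then continues the branch by reiterating the IFT, whereas you argue globally: for each fixed $t$ you use the intermediate value theorem together with strict monotonicity in $\xi$ to produce a unique crossing, and then invoke the IFT once to obtain smoothness of the resulting curve. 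Your route is slightly more direct and avoids the continuation step; the paper's route makes the initial slope $\eta_\e'(0)=0$ explicit (since $\partial_t v_\e(-\log\gamma,0)=0$), which is why your strict inequality $\eta_\e'(t)<0$ from Lemma~\ref{l3} holds only on $(0,T]$ --- a harmless point since the conclusion ``decreasing'' still follows.
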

\begin{proof}
	To begin with, we compute 
	$$v_{\e}(-\log \gamma,0) = \psi_{\e}(e^{\log \gamma})= \psi_{\e}(\gamma)= 1 +\e_{\beta}\psi \big(\frac{\gamma-1}{\e_{\beta}}\big).
	$$
	Because $\gamma-1<0$, it is clear that $\frac{\gamma-1}{\e_{\beta}}< -\frac{1}{2}$ if $\e_{\beta}$ small enough, hence $\psi \big(\frac{\gamma-1}{\e_{\beta}}\big)=
\frac{\gamma-1}{\e_{\beta}}$ and $\psi_{\e}(\gamma)=\gamma$. Therefore, $v_{\e}(-\log \gamma,0) =\gamma$. We remind that the function $\xi \mapsto v_{\e}(\xi,0)$ is smooth and non-increasing; however, in some neighborhood of $-\log \gamma$ such that  $\frac{\gamma-1}{\e_{\beta}}< -\frac{1}{2}$, the function $v_{\e}(\xi,0)$ is decreasing which yields that the initial position of $\eta_{\e}$ is well-defined by \eqref{initial_eta}.

Next, we compute 
$$
\frac{\partial v_{\e}}{\partial \xi}(-\log \gamma,0)= -\gamma \psi_{\e}'(\gamma)=-\gamma <0,
$$
and (see the proof of Lemma \ref{l3})
$$
\frac{\partial v_{\e}}{\partial t}(-\log \gamma,0)= -\beta_{\e}(\gamma -1) =0.
$$
It is now an exercise to apply the Implicit Function Theorem, which shows that there exist $\delta_i,\tau_i>0, i=1,2$, and a unique function $\varphi_{\e} \in C^{\infty}([-\tau_1,\tau_2])$ such that, if $(\xi,t) \in [-\log \gamma-\delta_1,-\log \gamma+\delta_2] \times [-\tau_1,\tau_2]$ verifies $v_{\e}(\xi,t)=\gamma$, then $\xi=\varphi_{\e}(t)$. Note that  $\varphi_{\e}$ is a decreasing function because
$$
\varphi'_{\e}(t)= - \frac{\partial v_{\e}}{\partial t}(\varphi_\e(t),t)\,
\Big(\frac{\partial v_{\e}}{\partial \xi}(-\varphi_\e(t),t)\Big)^{-1} < 0.
$$
As  by product, taking the restriction of $\varphi_{\e}$ to $[0, \tau_2]$, we have constructed a (small) branch of the curve $\eta_{\e}$, of class $C^{\infty}$, such that \eqref{af} holds for all $t\in[0,\tau_2]$, $\eta_{\e}(0)=\gamma$.  Lemma \ref{lem_lower_bound} implies that $v_\e \geqslant 1-(\e\vee \e_\beta) e^{\delta t}$ for $\xi \leqslant \tilde \kappa^*$. Combining with the fact that $\frac{\pr v_\e}{\pr \xi} <0$, we see that  $\tilde \kappa^* < \eta_\e(t) \leqslant \log \frac{1}{\gamma}$.
 
 In view of Lemmas \ref{l2}  and \ref{l3}, we may reiterate the Implicit Function Theorem and continue this branch up to a endpoint achieved at time $T$.
\end{proof}

\begin{lemma}\label{lem_Lip}
	For any $T>0$, there exists a constant $C_T>0$, independent of $\e$, such that $\sup_{t\in[0,T]}|\eta'_\e(t)| \leqslant C_{T}$.
\end{lemma}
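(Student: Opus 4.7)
The plan is to use the Implicit Function Theorem representation
\[
\eta'_\e(t)\;=\;-\,\frac{\partial_t v_\e(\eta_\e(t),t)}{\partial_\xi v_\e(\eta_\e(t),t)},
\]
obtained by differentiating the defining relation $v_\e(\eta_\e(t),t)=\gamma$ in $t$, exactly as in the proof of Theorem~\ref{IFT}. The task then reduces to bounding the numerator from above and the denominator from below by constants independent of $\e$.

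For the numerator, I would exploit the fact that on the transit boundary one has $v_\e=\gamma<1-\e$ for $\e$ small, so the penalty term vanishes and equation~\eqref{2.1} reduces to
\[
\partial_t v_\e(\eta_\e(t),t)\;=\;\tfrac12\sigma_\e^2(\gamma)\bigl(\partial_{\xi\xi}v_\e+\partial_\xi v_\e\bigr)(\eta_\e(t),t)\;+\;\delta\bigl(\partial_\xi v_\e(\eta_\e(t),t)+\gamma\bigr).
\]
Lemmas~\ref{l3b}, \ref{l4}, \ref{l2} and~\ref{l2a} control each factor on the right-hand side by constants independent of $\e$. The only delicate point is the Gaussian factor $t^{-1/2}\exp(-c_1\xi^2/t)$ appearing in Lemma~\ref{l4}, which is bounded by a multiple of $|\xi|^{-1}$ and hence harmless as soon as $\eta_\e(t)$ is bounded away from $\xi=0$. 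This can be secured either by choosing in Lemma~\ref{lem_lower_bound} an auxiliary $\tilde K$ whose corresponding $\tilde\kappa^*$ is strictly positive (which is possible by adjusting $\tilde\gamma$ using Proposition~\ref{prop_TW}(iv)), or by an additional short-time comparison that pins $\eta_\e$ close to its initial position $\log(1/\gamma)$.

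For the denominator, the main obstacle is a quantitative lower bound $|\partial_\xi v_\e(\eta_\e(t),t)|\ge c>0$ independent of $\e$, since Lemma~\ref{l2} only yields the strict sign $\partial_\xi v_\e<0$. My plan is to build a subsolution of~\eqref{2.1} whose level set $\{\cdot=\gamma\}$ lies close to $\eta_\e$ and along which the $\xi$-derivative admits an explicit strictly negative bound, and then to transfer this bound to $v_\e$ via a Hopf-type comparison on the common level curve. The travelling wave $\tilde K$ of Theorem~\ref{thm_TW} is a natural candidate, since Proposition~\ref{prop_TW}(i) and the explicit formula~\eqref{K_equation} provide a computable lower bound on $|d\tilde K/d\xi|$ in any compact neighborhood of $\{\tilde K=\gamma\}$. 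The hardest step is carrying out this transfer uniformly in $\e$: the coefficient $\sigma_\e$ has a $v$-gradient that blows up like $1/\e$ across the transition layer $v_\e\in[\gamma-\e,\gamma]$, so classical parabolic regularity does not yield $\e$-uniform $C^1$ bounds up to the moving boundary. One must therefore work with quantities insensitive to this layer, for instance the weighted difference $v_\e-\tilde K+(\e\vee\e_\beta)e^{\delta t}$ from Lemma~\ref{lem_lower_bound}, whose sign structure can be controlled through the maximum principle without differentiating $\sigma_\e$. Combining the numerator and denominator bounds then yields $|\eta'_\e(t)|\le C_T$ uniformly in $\e$ and $t\in[0,T]$.
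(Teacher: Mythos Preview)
Your overall structure is right---derive the formula
\[
\eta'_\e(t)=-\,\frac{\partial_t v_\e(\eta_\e(t),t)}{\partial_\xi v_\e(\eta_\e(t),t)}
\]
and bound numerator and denominator separately---but both halves of your plan are more complicated than necessary, and the denominator half has a genuine gap.

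\textbf{Numerator.} You reconstruct $\partial_t v_\e$ from the PDE and then invoke Lemmas~\ref{l3b}--\ref{l4}, worrying about the Gaussian factor near $\xi=0$. The paper bypasses this entirely: Lemmas~\ref{l3} and~\ref{l3a} bound $\partial_t v_\e$ directly. Your concern about the singularity at $\xi=0$ is in fact already handled by the construction of $\rho$ in the proof of Lemma~\ref{l3a}: since $v_\e>(1+\gamma)/2>\gamma$ throughout $B_\rho$ and $v_\e(\eta_\e(t),t)=\gamma$, the curve $(\eta_\e(t),t)$ never enters $B_\rho$, and on the complement one has the clean bound $\partial_t v_\e\ge -C_3$. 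No auxiliary choice of $\tilde\gamma$ is needed.

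\textbf{Denominator.} Here your plan---build a subsolution with controlled slope at its $\gamma$-level set and transfer the bound to $v_\e$ via a Hopf-type comparison---is vague precisely where it matters, and you yourself flag the difficulty: the $1/\e$ blow-up of $\sigma'_\e$ across the transition layer blocks uniform $C^1$ estimates up to the moving boundary, and it is not clear how to make the ``quantities insensitive to this layer'' argument rigorous. The paper avoids all of this with an elementary observation you have missed. Lemma~\ref{l3b} gives $\partial_{\xi\xi}v_\e+\partial_\xi v_\e\le 0$, i.e.\ $\xi\mapsto e^{\xi}\partial_\xi v_\e(\xi,t)$ is non-increasing. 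By the Mean Value Theorem there is a point $\hat\eta_\e(t)\in(\tilde\kappa^*,\eta_\e(t))$ with
\[
\partial_\xi v_\e(\hat\eta_\e(t),t)=\frac{v_\e(\tilde\kappa^*,t)-\gamma}{\tilde\kappa^*-\eta_\e(t)}\le -\,\frac{1-(\e\vee\e_\beta)e^{\delta T}-\gamma}{\log(1/\gamma)-\tilde\kappa^*},
\]
using Lemma~\ref{lem_lower_bound} and $\eta_\e(t)\le\log(1/\gamma)$. The monotonicity of $e^\xi\partial_\xi v_\e$ then pushes this bound from $\hat\eta_\e(t)$ forward to $\eta_\e(t)$, yielding $\partial_\xi v_\e(\eta_\e(t),t)\le -c$ with $c>0$ independent of $\e$. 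This three-line argument replaces your entire subsolution/Hopf program.
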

\begin{proof} From the Implicit Function Theorem, it holds:
	$$
	\eta'_{\e}(t)= - \frac{\partial v_{\e}}{\partial t}(\eta_{\e}(t),t)\,
	\Big(\frac{\partial v_{\e}}{\partial \xi}((\eta_{\e}(t),t)\Big)^{-1}.
	$$
	Note that Lemmas \ref{l3} and \ref{l3a} implies that $\frac{\pr v_\e}{\pr t}$ is bounded. To prove the desired results, we only need to show that $\frac{\pr v_\e}{\pr \xi}(\eta_\e(t),t) \leqslant -c$, for some positive $c$. In Lemma \ref{l3b}, we proved that $\frac{\pr^2 v_\e}{\pr \xi^2}+\frac{\pr v_\e}{\pr \xi} \leqslant 0$, which implies that $e^{\xi}\frac{\pr v_\e}{\pr \xi} $ is non-increasing in $\xi$. Since $v_\e$ is smooth, there exists a point $\hat \eta_\e(t) \in (\tilde \kappa^*, \eta_\e(t))$ such that
 	$$
	\frac{\pr v_\e}{\pr \xi}(\hat \eta_\e(t),t)=\frac{v_\e(\tilde \kappa^*,t)-v_\e(\eta_\e(t),t)}{\tilde \kappa^*-\eta_\e(t)}=-\frac{v_\e(\tilde \kappa^*,t)-v_\e(\eta_\e(t),t)}{\eta_\e(t)-\tilde \kappa^*}.
	$$
	We have shown that $v_\e(\tilde \kappa^*,t)\geqslant \tilde K(\tilde \kappa^*)-(\e\vee \e_\beta) e^{\delta t}=1-(\e\vee \e_\beta) e^{\delta t}$ and $\eta_\e(t) \leqslant \log \frac{1}{\gamma}$. This yields that 
	$$
	\frac{\pr v_\e}{\pr \xi}(\hat \eta_\e(t)) \leqslant -\frac{1-(\e\vee \e_\beta) e^{\delta t}-\gamma}{\log\frac1\gamma-\tilde \kappa^*}.
	$$
	Since $e^{\xi}\frac{\pr v_\e}{\pr \xi} $ is non-increasing, it holds that 
	$$
	\frac{\pr v_\e}{\pr \xi}(\eta_\e(t),t) \leqslant- e^{\hat \eta_\e(t)-\eta_\e(t)} \frac{1-(\e\vee \e_\beta) e^{\delta t}-\gamma}{\log\frac1\gamma-\tilde \kappa^*} \leqslant - e^{\tilde \kappa^*-\log\frac1\gamma} \frac{1-(\e\vee \e_\beta) e^{\delta t}-\gamma}{\log\frac1\gamma-\tilde \kappa^*} .
	$$
	This completes the proof.
\end{proof}
From Theorem \ref{IFT} and Lemma \ref{lem_Lip}, we see that the sequence $(\eta_\e)_{\e>0}$ is bounded in $C^1([0,T])$, therefore, extracting a subsequence if necessary, it converges uniformly to a function $\hat \eta(t)$.
\begin{cor} \label{hat_eta}
Extracting a subsequence if necessary, the sequence $\eta_{\e}$ converges uniformly to a limit $\hat \eta(t)$.
\end{cor}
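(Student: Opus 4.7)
The plan is to apply the Arzelà--Ascoli theorem to the family $\{\eta_\e\}_{\e>0}$ viewed as elements of $C([0,T])$. Two ingredients are needed: uniform boundedness and equicontinuity, both on a common interval $[0,T]$ with bounds independent of $\e$.

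First, I would verify uniform boundedness. By Theorem \ref{IFT}, each $\eta_\e$ is a well-defined $C^\infty$ function on $[0,T]$ satisfying $\tilde\kappa^* < \eta_\e(t) \leqslant \log(1/\gamma)$ for all $t\in[0,T]$. Since $\tilde\kappa^*$ and $\log(1/\gamma)$ are independent of $\e$, the family is uniformly bounded in the sup-norm on $[0,T]$.

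Next, I would invoke Lemma \ref{lem_Lip}, which provides a constant $C_T>0$, independent of $\e$, such that $\sup_{t\in[0,T]}|\eta'_\e(t)| \leqslant C_T$. This gives a uniform Lipschitz bound
$$|\eta_\e(t)-\eta_\e(s)| \leqslant C_T |t-s|, \qquad s,t\in[0,T],$$
so the family is equicontinuous on $[0,T]$.

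With uniform boundedness and equicontinuity in hand, the Arzelà--Ascoli theorem yields a subsequence $\eta_{\e_n}$ that converges uniformly on $[0,T]$ to a continuous limit function, which I denote by $\hat\eta$. As a byproduct (although not explicitly required by the statement), the uniform Lipschitz bound passes to the limit, so $\hat\eta$ is itself Lipschitz on $[0,T]$ with constant at most $C_T$. There is no real obstacle in this step; the genuine work has already been performed in Lemma \ref{lem_Lip}, where the uniform lower bound on $|\partial_\xi v_\e(\eta_\e(t),t)|$ was extracted via the monotonicity of $e^\xi \partial_\xi v_\e$ and the comparison against the traveling wave $\tilde K$.
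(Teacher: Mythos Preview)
Your proof is correct and follows essentially the same approach as the paper: the paper simply notes that Theorem~\ref{IFT} and Lemma~\ref{lem_Lip} give a uniform $C^1([0,T])$ bound on $(\eta_\e)$, hence precompactness in $C^0([0,T])$, which is exactly the Arzel\`a--Ascoli argument you spell out in detail.
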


\section{Main Results}
\subsection{Existence and Uniqueness}
Lemmas \ref{l1a}-\ref{l3b}   provide estimates  on the approximated solution $v_{\e}$. By taking a limit as $\e\to 0$, we are able to  derive the existence of a solution to \eqref{problem_evolution1}-\eqref{FBP}. 
\begin{theorem}\label{thm_exist}  (i) For any $T>0$, there exists a sequence $\e \to 0$ such that $v_\e \to v$ a.e. in $\mathbb R \times [0,T]$, $\frac{\partial v_\e}{\partial \xi} \to \frac{\partial v}{\partial \xi}$ a.e. in $\mathbb R \times [0,T]$, $v_\e \to v$ in $W^{1,0}_\infty(\mathbb R\times [0,T])$ weak-$*$ and $W^{2,1}_\infty((\mathbb R\times [0,T] )\setminus \overline{Q}_{\rho}) $\,weak-$*$, for any $\rho>0$, where $Q_\rho=(-\rho,\rho)\times (0,\rho^2)$. Moreover, extracting a subsequence if necessary, $\eta_\e$ converges uniformly to $\hat \eta$;\footnote{For $\Omega \subset \R\times [0,T]$, $W^{2,1}_{p}(\Omega)$, $1<p<\infty$, is the space of elements of $L^p(\Omega)$ whose derivatives are also in $L^p(\Omega)$, respectively up to second order in $\xi$ and to first order in $t$. $W^{2,1}_{\infty}(\Omega)$ is the space of bounded functions whose derivatives are bounded, respectively up to second order in $\xi$ and first order in $t$. $W^{1,0}_{\infty}(\Omega)$ denotes the space of bounded functions whose derivative w.r.t. $\xi$ is also bounded.}\\ 
(ii) $v$ is a solution of the original free boundary problem \eqref{problem_evolution1};\\
(iii) $v$ satisfies the estimates of Lemmas \ref{l1a}-\ref{l3b}, and the inequality
\begin{equation}\label{inequality}
\frac{\pr^2 v}{\pr \xi^2}+\frac{\pr v}{\pr \xi}\leqslant 0 \;\, \mbox{a.e. in} \;\, \mathbb R\times [0,T] )\setminus \overline{Q}_{\rho}, 
\end{equation}
as well as the following growth condition: there exists a constant $B>0$ such that $v(\xi,t)= 1$ when  $\xi < -B $ and $v(\xi,t)\leqslant e^{-\xi}$ when $\xi >B$, $0\leqslant t\leqslant T$.
\end{theorem}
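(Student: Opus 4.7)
The plan is to build $v$ as the limit of the regularised solutions $v_\e$, passing to the limit in equation \eqref{2.1} together with the monotonicity and convexity estimates of Section 4, and then uniquely identifying the limit with a solution of \eqref{problem_evolution1}. First I would collect the $\e$-independent bounds assembled in Lemmas \ref{l1a}--\ref{l3b} and \ref{l4}: $v_\e$ is uniformly bounded in $L^\infty(\mathbb R\times[0,T])$, $\partial_\xi v_\e$ is uniformly bounded in $L^\infty$, while $\partial_t v_\e$ and $\partial_\xi^2 v_\e$ are uniformly bounded on any complement of $\overline{Q}_\rho$ (the blow-up near $t=0$ is explicitly controlled by $t^{-1/2}e^{-c_1\xi^2/t}$). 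This furnishes a uniform $W^{1,0}_\infty(\mathbb R\times[0,T])$ bound and a uniform $W^{2,1}_\infty((\mathbb R\times[0,T])\setminus\overline{Q}_\rho)$ bound for every $\rho>0$. By Banach--Alaoglu on the corresponding weak-$*$ closed balls, together with a diagonal extraction over an exhausting sequence $\rho_n\downarrow 0$, I extract a subsequence such that $v_\e\to v$ in the announced weak-$*$ senses. A Rellich--Kondrachov argument on bounded subdomains upgrades this to strong $L^p_{\mathrm{loc}}$ convergence of $v_\e$ and $\partial_\xi v_\e$, hence, after a further extraction, a.e.\ convergence of both. The uniform convergence $\eta_\e\to\hat\eta$ on $[0,T]$ has already been secured in Corollary \ref{hat_eta}.

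Next I would pass to the limit in \eqref{2.1}. The obstacle term is handled in the classical penalised fashion: from $\beta_\e(v_\e-1)\geqslant 0$ and $v_\e\leqslant 1$ (Lemma \ref{l1a}), the limit satisfies $v\leqslant 1$; on any compact subset of $\{v<1\}$, $\beta_\e(v_\e-1)\to 0$ a.e.\ by construction of $\beta_\e$, and combined with the weak-$*$ convergence of the linear part this yields the PDE in the non-contact region, i.e.\ the variational inequality \eqref{problem_evolution1}. The more delicate point is the discontinuous leading coefficient $\sigma_\e(v_\e)$. Here the uniform convergence $\eta_\e\to\hat\eta$ and the strict monotonicity $\partial_\xi v_\e<0$ (Lemma \ref{l2}) are decisive: for any compact $K\subset\{(\xi,t)\in\mathbb R\times[0,T]:\xi\neq\hat\eta(t)\}$, for $\e$ small enough $v_\e-\gamma$ has a constant sign on $K$, so that $\sigma_\e(v_\e)$ equals $\sigma_L$ (or $\sigma_H$) identically on $K$. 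Combined with the weak-$*$ convergence of $\partial_\xi^2 v_\e$ on $K$, this lets me identify the limit of $\sigma_\e^2(v_\e)\partial_\xi^2 v_\e$ with $\sigma^2(v)\partial_\xi^2 v$ on $K$. Since the graph $\{\xi=\hat\eta(t)\}$ has zero two-dimensional Lebesgue measure, \eqref{problem_evolution1} is satisfied a.e., proving (ii).

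The estimates collected in (iii) transfer to $v$ by passing to the limit: the pointwise bounds $0\leqslant v\leqslant\min(1,e^{-\xi})$, $-(1+\e_\beta)e^{\delta t}\leqslant\partial_\xi v\leqslant 0$, $0\leqslant\partial_\xi v+v\leqslant 1$, $\partial_t v\leqslant 0$, the two-sided localised bound on $\partial_\xi^2 v$, and the concavity-type inequality $\partial_\xi^2 v+\partial_\xi v\leqslant 0$ on $(\mathbb R\times[0,T])\setminus\overline{Q}_\rho$ all follow from the weak-$*$ and a.e.\ convergences (strict inequalities in $v_\e$ degenerate to a.e.\ inequalities in $v$, which is the form asked for). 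For the growth condition, Lemma \ref{lem_lower_bound} yields $v\geqslant\tilde K$ in the limit, and since $\tilde K\equiv 1$ for $\xi\leqslant\tilde\kappa^*$, setting $B:=\max(|\tilde\kappa^*|,0)+1$ gives $v(\xi,t)=1$ for $\xi<-B$; the upper half follows from $v\leqslant e^{-\xi}$ (Lemma \ref{l1a}).

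The main obstacle in this program is the passage to the limit in the discontinuous coefficient $\sigma_\e^2(v_\e)\partial_\xi^2 v_\e$, which is a nonlinearity of a term whose $\e$-uniform control is only weak-$*$. What makes it workable is precisely the uniform convergence of the approximating transit boundary $\eta_\e$ together with the strict monotonicity $\partial_\xi v_\e<0$: they confine the thin set where $v_\e$ straddles the level $\gamma$ to an $\e$-shrinking neighbourhood of $\hat\eta$, so that $\sigma_\e(v_\e)$ stabilises on any compact set disjoint from the limit curve.
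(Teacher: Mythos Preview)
Your proposal is correct and follows essentially the same route as the paper: uniform bounds from Lemmas \ref{l1a}--\ref{l4} give compactness, a diagonal extraction yields a.e.\ and weak-$*$ convergence, the uniform convergence of $\eta_\e$ (Corollary \ref{hat_eta}) forces $\sigma_\e(v_\e)\to\sigma(v)$ a.e.\ so that the discontinuous leading term passes to the limit, the standard penalisation argument handles the obstacle, and Lemma \ref{lem_lower_bound} yields the growth condition. The only nuance worth tightening is that ``strict monotonicity $\partial_\xi v_\e<0$'' by itself is not quite enough to trap the transition layer in a shrinking neighbourhood of $\hat\eta$; the paper (and implicitly your last paragraph) uses the \emph{quantitative} lower bound $\partial_\xi v_\e(\eta_\e(t),t)\leqslant -c$ from the proof of Lemma \ref{lem_Lip}, together with the second-derivative control of Lemma \ref{l4}, to conclude that $v<\gamma$ for $\xi>\hat\eta(t)$ and $v>\gamma$ for $\xi<\hat\eta(t)$.
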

\begin{proof} Let $(\e_n)_{n\geqslant 1}$ be a sequence converging to $0$ when $n \to +\infty$ and consider the corresponding solutions $(v_{\e_{n}})$ of \eqref{2.1} and \eqref{2.2}. For simplicity, we denote $v_{\e_{n}}$ by $v_n$. According to  Lemmas \ref{l1a}-\ref{l4},  we first observe that the sequence $(v_n)$ is bounded in the spaces $W^{1,0}_\infty(\mathbb R\times [0,T] )\cap W^{2,1}_\infty((\mathbb R\times [0,T] )\setminus \bar Q_{\rho} )$. Second, the sequence $(v_n)$ is bounded in the space $W^{2,1}_{p,\mbox{\tiny{loc}}}(\mathbb R\times[0,T])$, $1<p<2$.

Next, let $(A_m)_{m\geqslant 1}$ be a sequence of positive numbers such that $A_m \to +\infty$ as $m \to +\infty$. Let us consider the restriction $v_{n}^m$ of $v_n$ to the interval $[-A_m,A_m]$. At fixed $m\geqslant 1$, the sequence $(v_{n}^m)$ is bounded in the space $W^{2,1}_{p}([-A_m,A_m])\times [0,T])$ for any $1<p<2$. One can extract a subsequence, denoted by $(v_{n_j}^m)$, which converges a.e. in $[-A_m,A_m] \times [0,T]$ and weakly in $W^{2,1}_{p}([-A_m,A_m]\times [0,T])$, $1<p<2$, as $j \to +\infty$. By a standard diagonal extraction procedure, one can eventually extract a subsequence, say $(v_{n_k})$, such that $v_{n_k}$ and $\frac{\partial}{\partial \xi}v_{n_k}$ converge respectively to $v$ and $\frac{\partial}{\partial \xi}v$ almost everywhere in $\mathbb R \times [0,T]$ as $k \to +\infty$. After a new extraction, $v_{n_k} \to v$ in $W^{1,0}_\infty(\mathbb R\times [0,T])$ weak-$*$ and $W^{2,1}_\infty((\mathbb R\times [0,T] )\setminus \overline{Q}_{\rho}) $\,weak-$*$.  

It is not difficult to see that $v$ satisfies the properties of Lemmas \ref{l1a}-\ref{l3b}. Set $f_{\e}= \frac{\pr^2 v_\e}{\pr \xi^2}+\frac{\pr v_\e}{\pr \xi}$, $f_{\e} \leqslant 0 \in \mathbb{R}\times[0,T]$ (see Lemma \ref{l3b}). According to the above results, $f_{n''} \to f= \frac{\pr^2 v}{\pr \xi^2}+\frac{\pr v}{\pr \xi}$ in $L^\infty((\mathbb{R}\times [0,T] )\setminus \overline{Q}_{\rho}) $\,weak-$*$ which is non-negative in the distribution sense, and, hence, \eqref{inequality} holds. 

Since the sequence $\eta_{n''}$ is bounded in $C^1([0,T])$ (see Lemma \ref{lem_Lip}), a subsequence converges to some $\tilde  \eta$ in $C^0([0,T])$. More specifically, in the proof of Lemma \ref{lem_Lip}, we showed that $\frac{\pr v_\e}{\pr \xi}(\eta_\e(t),t)\leqslant -c$, where the constant $c$ is independent of $\e$. With the estimate of $\frac{\pr^2 v_\e}{\pr \xi^2}$ in Lemma \ref{l4}, we deduce that, for any $t>0$, there exists a small constant $\Upsilon$ independent of $\e$ such that, for $x<\Upsilon$, 
$$
v_{n''}(\eta_{n''}(t)+x,t)-v_{n''}(\eta_{n''}(t),t)\leqslant -\frac{c}{2}x,
$$  
and 
$$
v_{n''}(\eta_{n''}(t)-x,t)-v_{n''}(\eta_{n''}(t),t)\geqslant \frac{c}{2}x.
$$ 
Taking the limit as $n'' \to \infty$ and combining with  $v(\xi,\cdot)$ non-increasing in $\xi$, we see that $v<\gamma$ if $\xi >\tilde \eta(t)$ and $v>\gamma$ if $\xi <\tilde \eta(t)$.  This yields that $\tilde \eta=\hat \eta$ (see Corollary \ref{hat_eta}). 

Moreover, the convergence of $\eta_{n''}$ to $\hat \eta$ implies the almost everywhere convergence of $\sigma_{n''}(v_{n''})$ to $\sigma(v)$.
Hence, we have that $\mathcal L_{n''}[v_{n''}]$ converges  to $\mathcal L[v]$ in $L^\infty((\mathbb{R}\times [0,T] )\setminus \overline{Q}_{\rho}) $\,weak-$*$. This implies that $\mathcal L[v] \geqslant 0$. It is also easy to verify that $\mathcal L[v]=0$ whenever $v<1$. Thus, $v$ is a solution to \eqref{problem_evolution1}. 

Finally, let us check the growth condition as $\xi \to \pm \infty$: according to Lemmas \ref{l1a} and  \ref{lem_lower_bound},  $v_\e \leqslant \min(1,e^{-\xi})$ and  $v_\e \geqslant \tilde K-(\e\vee \e_\beta) e^{\delta t}$, respectively. At the limit $\e \to 0$, it holds almost everywhere $v(\xi,t) \leqslant \min(1,e^{-\xi})$ and $v(\xi,t)
\geqslant \tilde K$, $-\infty <\xi< +\infty, 0\leqslant t\leqslant T$. Therefore, $v(\xi,t)= 1$ when $\xi \leqslant  \tilde \kappa^*$ and  $v(\xi,t) \leqslant e^{-\xi}$ when $\xi \geqslant 0$. 
\end{proof}

Then, the uniqueness of $v$ given by Theorem \ref{thm_exist} is a direct consequence of the following theorem:

\begin{theorem}\label{uniqueness}
Let  $v_i\in\Big\{\bigcap_{\rho>0}W^{2,1}_\infty((\mathbb R\times [0,T] )\setminus \bar Q_\rho )\Big\}\cap
W^{1,0}_\infty(\mathbb R\times [0,T] )$ be a solution to \eqref{problem_evolution1} satisfying 
$$
\frac{\pr^2 v_i}{\pr \xi^2}+\frac{\pr v_i}{\pr \xi}\leqslant 0,
$$
for $i=1,2$.  Suppose that there exists $B_i>0$ such that $v_i=1$ for $\xi<-B_i$ and $v_i \leqslant e^{-\xi}$ for $\xi>B_i$, $i=1,2$. Then, it holds $v_1=v_2$.
\end{theorem}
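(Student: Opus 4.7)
The plan is a comparison argument. Setting $w := v_1 - v_2$, by symmetry it is enough to show $w \leq 0$ a.e., for then swapping the roles of $v_1$ and $v_2$ gives the reverse inequality and hence $v_1 \equiv v_2$.

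The core step is to derive a closed linear parabolic differential inequality for $w$ on the open set $\{w > 0\}$. On this set $v_2 < v_1 \leq 1$, so $v_2 < 1$, and the variational form of \eqref{problem_evolution1} for $v_2$ reduces a.e.\ to $\mathcal{L}^{(2)} v_2 = 0$, where the superscript indicates that $\mathcal{L}$ is evaluated with $\sigma(v_2)$ frozen in place of $\sigma(v)$; on the other hand $\mathcal{L}^{(1)} v_1 \geq 0$ always. Subtracting and writing
\[
\sigma^2(v_1)\big(\partial_{\xi\xi} v_1 + \partial_\xi v_1\big) - \sigma^2(v_2)\big(\partial_{\xi\xi} v_2 + \partial_\xi v_2\big) = \sigma^2(v_2)\big(\partial_{\xi\xi} w + \partial_\xi w\big) + \big(\sigma^2(v_1) - \sigma^2(v_2)\big)\big(\partial_{\xi\xi} v_1 + \partial_\xi v_1\big),
\]
I would observe that on $\{w > 0\}$ one has $v_1 > v_2$ hence $\sigma^2(v_1) \geq \sigma^2(v_2)$ (since $\sigma^2(\cdot)$ is non-decreasing, jumping up at $\gamma$), while the fundamental inequality \eqref{inequality} gives $\partial_{\xi\xi} v_1 + \partial_\xi v_1 \leq 0$; thus the cross term is non-positive and can be dropped. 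This yields
\[
\widetilde{\mathcal{L}} w := -\partial_t w + \tfrac{1}{2}\sigma^2(v_2)\big(\partial_{\xi\xi} w + \partial_\xi w\big) + \delta\big(\partial_\xi w + w\big) \geq 0 \quad \text{a.e.\ on } \{w > 0\},
\]
a linear parabolic differential inequality whose principal coefficient is uniformly bounded between $\sigma_H^2$ and $\sigma_L^2$ (although only $L^\infty$ in $(\xi,t)$).

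To conclude, I would absorb the unfavorable positive zero-th order coefficient through the substitution $W := e^{-\lambda t} w$ with $\lambda > \delta$, which turns $\widetilde{\mathcal{L}} w \geq 0$ into a sub-caloric inequality for $W$ with zero-th order coefficient $\delta - \lambda < 0$. The data are compatible: $w(\cdot,0) \equiv 0$ from the common initial condition in \eqref{problem_evolution1}; $w \equiv 0$ for $\xi < -\max(B_1,B_2)$ since both $v_i \equiv 1$ there; and $w$ is uniformly bounded on $\mathbb R \times [0,T]$ because $v_i \in W^{1,0}_{\infty}$. A standard maximum principle for bounded strong subsolutions on the unbounded strip (compact exhaustion in $\xi$ together with the Aleksandrov--Bakelman--Pucci estimate applied to $W^{2,1}_{p,\mathrm{loc}}$ functions) then rules out any positive interior maximum of $W$, forcing $W \leq 0$ and hence $w \leq 0$.

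The main technical hurdle is precisely the application of the maximum principle given the limited regularity of the ingredients: the effective leading coefficient $\sigma^2(v_2)$ is only in $L^\infty$ with a jump across $\{v_2 = \gamma\}$, and the $W^{2,1}_\infty$ control of $v_i$ degenerates in the initial corner $\overline{Q}_\rho$. Both issues are handled routinely: the effective operator remains uniformly parabolic with bounded measurable coefficients, so ABP/Krylov-type strong maximum principles apply to a.e.\ strong sub/supersolutions; and the singular corner at $t=0$ is harmless because $w$ vanishes identically on the parabolic base $\{t=0\}$, so the inequality $W \leq 0$ on $(\mathbb R\times [0,T])\setminus \overline{Q}_\rho$ for every $\rho>0$ passes to the limit by continuity.
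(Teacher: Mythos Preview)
Your proposal is correct and follows essentially the same approach as the paper: both arguments isolate the cross term $\tfrac12(\sigma^2(v_1)-\sigma^2(v_2))(\partial_{\xi\xi}v_1+\partial_\xi v_1)$, use the monotonicity of $\sigma^2(\cdot)$ together with the hypothesis $\partial_{\xi\xi}v_i+\partial_\xi v_i\le 0$ to sign it, exploit the obstacle structure to turn the variational equality into $\mathcal L v=0$ on the relevant set, absorb the unfavourable zeroth-order term via an exponential weight in $t$, and conclude by a maximum principle for strong solutions. The only cosmetic differences are that the paper works with $e^{-2\delta t}(v_1-v_2)$ and argues by contradiction at a negative minimum via Bony's parabolic maximum principle, whereas you work with $e^{-\lambda t}(v_1-v_2)$, $\lambda>\delta$, at a positive maximum via ABP/Krylov-type estimates; these are equivalent formulations of the same idea.
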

\begin{proof}
	Denote by $F=\frac{1}{2} \left(\sigma^2(v_1)-\sigma^2(v_2)\right)\left(\frac{\pr^2 v_1}{\pr \xi^2}+\frac{\pr v_1}{\pr \xi}  \right)$ and $\mathcal L_2=-\frac{\partial }{\partial t} +\frac{1}{2} \sigma^2(v_2)\Big(\frac{\partial^2 }{\partial \xi^2}+\frac{\partial }{\partial \xi}\Big)+\delta\Big(\frac{\partial }{\partial \xi}+ 1\Big)$. We rewrite that 
	$$
	\min\{ \mathcal L_2[v_1]+F,1-v_1\}=0.
	$$
	Let $w=e^{-2\delta t}\left(v_1-v_2\right)$. We prove that $w \geqslant 0$. Due to the growth condition on $v_1$ and $v_2$,  it holds that 
	$\lim_{\xi \rightarrow \pm \infty} w(\xi,t)=0$ for $t \in [0,T]$. Therefore if this conclusion is not true,   $w$ will achieve a negative minimum at some point $(\xi^*,t^*)$. By the parabolic version of Bony's maximum principle, it holds that 
$$
\limsup_{(\xi,t) \rightarrow (\xi^*,t^*)} \mathop{ess}\left\{\frac{\pr w}{\pr t} -\frac{1}{2}\sigma^2(v_2) \frac{\pr^2 w}{\pr \xi^2}-\left( \frac{1}{2} \sigma^2(v_2)+\delta \right)\frac{\pr w}{\pr \xi} \right\} \leqslant 0
$$
This is equivalent to 
$$
\limsup_{(\xi,t) \rightarrow (\xi^*,t^*)} \mathop{ess}\left\{\mathcal L_2[v_1-v_2] \right\} \geqslant -\delta(v_1-v_2)>0.
$$
By the continuity of $v_i$, we derive $\sigma(v_1) \leqslant \sigma(v_2)$ in a small parabolic neighborhood of $(\xi^*,t^*)$. It follows that 
$$
\limsup_{(\xi,t) \rightarrow (\xi^*,t^*)} F(\xi,t) \geqslant 0.
$$
In this neighborhood, we also have that 
$$
\mathcal L_2[v_1]+F=0 \text{ and } \mathcal L_2[v_2]\geqslant 0,\text{a.e..}
$$
Therefore, 
$$
\limsup_{(\xi,t) \rightarrow (\xi^*,t^*)} \mathop{ess}\left\{\mathcal L_2[v_1-v_2] \right\} \leqslant \limsup_{(\xi,t) \rightarrow (\xi^*,t^*)} -F(\xi,t) \leqslant 0,
$$
which is a contradiction. Thus, we proved that $w \geqslant 0$. Similarly, the reverse inequality holds, which yields the uniqueness result.
\end{proof}

\subsection{Properties of the free boundaries}
For the original problem \eqref{problem_evolution1}, we already introduced formally the default boundary $\hat \kappa$ and the transit boundary $\hat \eta$, see System \eqref{FBP}. The goal of this subsection is to to define the free boundaries rigorously and prove some basic properties. 

\subsubsection*{The default boundary}
Let us remind that $v_\e \geqslant \tilde K-(\e\vee\e_\beta)e^{\delta t}$, see Lemma \ref{lem_lower_bound}. Taking the limit as $\e \to 0 $, this implies that $v \geqslant \tilde K$. Since $\tilde K=1$ for $\xi \leqslant \tilde  \kappa^*$, it holds that the set $\{ \xi\,|\,v(\xi,t)<1\}$ is bounded from below. Now, we are in position to define
\begin{eqnarray}\label{difkappa}
	\hat \kappa(t):=\inf \{ \xi\,|\, v(\xi,t)<1\}.
\end{eqnarray}
Then, $v\leqslant e^{-\xi}$  indicates that  $\hat \kappa(t)$  is also bounded from above. Thus, we will have the following result.
\begin{theorem}\label{def_kappa}
	For each $t\in(0,T]$,  $\hat \kappa(t)$ is well-defined, i.e. we have $-\infty<\hat \kappa(t)<\infty$. Moreover, $v(\xi,t)=1$ for $\xi \leqslant \hat \kappa(t)$ and $v(\xi,t)<1$ whenever $\xi>\hat \kappa(t)$.
\end{theorem}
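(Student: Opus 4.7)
The plan is to establish the two finiteness assertions from the global bounds already collected for $v$, and then extract the structural statement from the monotonicity of $v$ in $\xi$.

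First, to show $\hat\kappa(t)>-\infty$, I would invoke Lemma \ref{lem_lower_bound}: passing $\e\to 0$ in $v_\e\geqslant \tilde K-(\e\vee\e_\beta)e^{\delta t}$ yields $v\geqslant \tilde K$ a.e., and since $\tilde K\equiv 1$ on $(-\infty,\tilde\kappa^*]$ while Lemma \ref{l1a} gives $v\leqslant 1$, we conclude $v(\xi,t)=1$ for all $\xi\leqslant\tilde\kappa^*$, so $\hat\kappa(t)\geqslant \tilde\kappa^*>-\infty$. For $\hat\kappa(t)<+\infty$, I would use the growth condition from Theorem \ref{thm_exist}: $v(\xi,t)\leqslant e^{-\xi}$ for $\xi\geqslant B$; in particular, for any $\xi$ larger than $\max(B,0)$ we have $v(\xi,t)<1$, so the set $\{\xi\,|\,v(\xi,t)<1\}$ is non-empty and its infimum is finite.

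Next, for the equality $v(\xi,t)=1$ on $\xi\leqslant\hat\kappa(t)$: by the very definition of the infimum, $v(\xi,t)\geqslant 1$ for every $\xi<\hat\kappa(t)$, which combined with $v\leqslant 1$ gives $v(\xi,t)=1$ there. The boundary value $v(\hat\kappa(t),t)=1$ then follows from the continuity of $v(\cdot,t)$, which is guaranteed by the $W^{1,0}_\infty$ regularity in Theorem \ref{thm_exist}.

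For the strict inequality $v(\xi,t)<1$ whenever $\xi>\hat\kappa(t)$, the key ingredient is the monotonicity $\partial v/\partial\xi\leqslant 0$ a.e., inherited from Lemma \ref{l2} via weak-$*$ convergence, which makes $\xi\mapsto v(\xi,t)$ non-increasing. Given any $\xi>\hat\kappa(t)$, the definition of infimum provides a point $\xi'\in[\hat\kappa(t),\xi)$ with $v(\xi',t)<1$, and monotonicity then yields $v(\xi,t)\leqslant v(\xi',t)<1$. The only mildly delicate point here is that non-increasingness is an a.e. statement for the derivative but must be upgraded to the pointwise statement for all $\xi$; this follows from the continuity of $v(\cdot,t)$. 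I expect this upgrade and the justification of continuity at the free boundary to be the main (though essentially routine) obstacle; everything else is a direct unpacking of Lemmas \ref{l1a}, \ref{l2}, and \ref{lem_lower_bound} together with the growth condition of Theorem \ref{thm_exist}.
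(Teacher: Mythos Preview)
Your proposal is correct and follows essentially the same route as the paper: the paper derives $\hat\kappa(t)>-\infty$ from Lemma \ref{lem_lower_bound} (passing $\e\to 0$ to get $v\geqslant \tilde K$, with $\tilde K=1$ for $\xi\leqslant\tilde\kappa^*$) and $\hat\kappa(t)<+\infty$ from the bound $v\leqslant e^{-\xi}$, exactly as you do. The paper does not spell out the ``Moreover'' clause at all, leaving the structural statement implicit in the definition of the infimum and the monotonicity of $v$ in $\xi$; your explicit justification via Lemma \ref{l2} and continuity is a welcome addition rather than a deviation.
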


\subsubsection*{The transit boundary} We remind that $\hat \eta \in C^0([0,T])$ is the limit of $\eta_\e$ (see Theorem \ref{thm_exist}). Thus, we will have the following theorem.
\begin{theorem} The initial positions of the free boundaries are as follows:
	$$\hat \eta(0)= \log \frac{1}{\gamma}, \quad \hat\kappa(0)=0.$$
	Furthermore, $\hat \kappa(t)$ and $\hat \eta(t)$ are non-increasing with respect to $t$.
\end{theorem}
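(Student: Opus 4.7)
The plan is to split the theorem into two pieces: first pin down the initial positions at $t=0$, then verify the monotonicity in $t$, treating $\hat\eta$ and $\hat\kappa$ separately since they arise from different constructions.

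\textbf{Initial positions.} For $\hat\kappa(0)$, I would pass to the limit in the explicit initial data $v_\e(\xi,0)=\psi_\e(e^{-\xi})$: from the definition of $\psi_\e$, it converges pointwise to $\min(1,e^{-\xi})$ as $\e\to 0$, so the limit $v$ satisfies $v(\xi,0)=\min(1,e^{-\xi})$. Since this equals $1$ exactly on $\xi\leqslant 0$ and is strictly less than $1$ for $\xi>0$, the definition \eqref{difkappa} gives $\hat\kappa(0)=0$ directly. For $\hat\eta(0)$, I would invoke Corollary \ref{hat_eta} together with Theorem \ref{IFT}: each approximating curve satisfies $\eta_\e(0)=\log(1/\gamma)$, and uniform convergence on $[0,T]$ passes this identity to $\hat\eta(0)=\log(1/\gamma)$.

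\textbf{Monotonicity of $\hat\eta$.} By Theorem \ref{IFT}, each $\eta_\e$ is smooth and (strictly) decreasing on $[0,T]$; Corollary \ref{hat_eta} provides uniform convergence of a subsequence to $\hat\eta$. Since a uniform limit of non-increasing functions is non-increasing, $\hat\eta$ is non-increasing. This step is essentially immediate.

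\textbf{Monotonicity of $\hat\kappa$.} The key input is that $t\mapsto v(\xi,t)$ is non-increasing for (a.e.) fixed $\xi$. From Lemma \ref{l3} one has $\partial_t v_\e<0$, and Theorem \ref{thm_exist}(iii) transfers the sign to the limit, so $\partial_t v\leqslant 0$ away from the initial corner at the origin; combined with continuity of $v$, this yields $v(\xi,t_1)\geqslant v(\xi,t_2)$ whenever $t_1\leqslant t_2$. Now fix $t_1\leqslant t_2$ and take $\xi\leqslant\hat\kappa(t_2)$. By Theorem \ref{def_kappa}, $v(\xi,t_2)=1$, and by Lemma \ref{l1a}, $v\leqslant 1$ everywhere. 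Hence
\begin{equation*}
1\;\geqslant\; v(\xi,t_1)\;\geqslant\; v(\xi,t_2)\;=\;1,
\end{equation*}
so $v(\xi,t_1)=1$, and Theorem \ref{def_kappa} forces $\xi\leqslant\hat\kappa(t_1)$. Taking supremum over admissible $\xi$ yields $\hat\kappa(t_2)\leqslant\hat\kappa(t_1)$.

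\textbf{Expected obstacle.} Nothing here is deep; every ingredient (the explicit initial datum, Lemma \ref{l3}, Theorem \ref{IFT}, and Theorem \ref{def_kappa}) is already in hand. The only mild subtlety is the passage from $\partial_t v_\e<0$ to pointwise monotonicity of $t\mapsto v(\xi,t)$, since Lemma \ref{l3a} allows $\partial_t v_\e$ to blow up near $(0,0)$; one handles this either by working in $\mathbb{R}\times(0,T]$ and using continuity of $v$ at $t=0$, or by exploiting the a.e.\ convergence of the $v_\e$ and Fubini to integrate $\partial_t v_\e$ in $t$ on any compact time interval bounded away from $0$.
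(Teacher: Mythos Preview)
Your proposal is correct and follows essentially the same approach as the paper: the paper's proof is a two-line sketch that cites $\eta_\e(0)=-\log\gamma$ and the monotonicity of $\eta_\e$ from Section~\ref{approx_transit} for the transit boundary, and invokes Theorem~\ref{def_kappa} together with the initial value of $v$ for the default boundary. You have simply spelled out the details the paper leaves implicit, including the role of $\partial_t v\leqslant 0$ in the monotonicity of $\hat\kappa$.
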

\begin{proof}
On the one hand, we know that $\eta_{\e}(0)= -\log \gamma$ 
and $\eta_{\e}(t)$ decreasing, see Section \ref{approx_transit}. On the other hand, the properties of $\hat \kappa(t)$ follow from Theorem \ref{def_kappa} and the initial value of $v$.
\end{proof}

 In the following, we will prove the smoothness of the free boundaries.  Note that the uniform lower bound in Lemma \ref{l2} implies that there exists a constant $c$ such that $v_\e(\xi,t)\leqslant \frac{1+\gamma}{2}$ whenever  $\eta_\e(t)-\xi \leqslant c$.
 	Then, one can choose a smooth function $\zeta$ such that $\zeta(t)<\eta_\e(t)$ and  $\|\zeta-\eta_\e\|_{L^{\infty}[0,T]} \in [{c}/{4},{c}/{2}]$ for sufficiently small $\e$. Therefore, $\zeta$ separates the default boundary $\hat \kappa$ and the transit boundary $\hat \eta$. So, we can discuss them one by one with cut-off functions being applied when necessary.

We first study the default boundary.  The proof  is essentially the same as that in \cite{chengrong2006free}, where the authors proved the smoothness of free boundary in American option problem. Thus, we just give a sketch of the proof for readers' convenience. We make the change of variable $\xi=\zeta(t)+x$ and set $u(x,t)=v(\zeta(t)+x,t)$. For suitable $a,b\in \mathbb R$, we have that  $\zeta(t)+a \leqslant \hat \kappa(t) \leqslant \zeta(t)+b < \hat \eta(t)$. 
It holds that
$$
\frac{\pr u}{\pr t} \in L^{\infty}(t_1,t_2;H^1(a,b)),\frac{\pr^2 u}{\pr t^2} \in L^{2}(t_1,t_2;L^2(a,b)),
$$
which implies the continuity of $v_t$ at $\xi=\hat \kappa(t)$. From the definition of $\hat \kappa$, one can prove that  $\hat \kappa $ is continuous in $(0,T]$. 
Applying a result from Cannon et al. \cite{cannon1974continuous}, we will have that $\hat \kappa \in C^1((0,T])$. Then, we may use the theory of parabolic equations to improve the regularity of $v(\xi,\tau)$ by bootstrapping. Repeating the procedure yields the following result.
\begin{theorem}\label{thm_default}
	$\hat \kappa \in C^{\infty}((0,T])$.
\end{theorem}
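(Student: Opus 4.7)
The plan is to follow the strategy outlined by the authors: work near the default boundary with the change of variables $\xi=\zeta(t)+x$, $u(x,t)=v(\zeta(t)+x,t)$, which flattens out the transit boundary issue and localizes the analysis to a strip $(a,b)\times (t_1,t_2)$ where the only remaining singularity is the obstacle at $u=1$. In this strip, the operator has smooth coefficients (because $v<\gamma$ is excluded on one side and $v$ is far from $\gamma$ on the relevant side), so we are reduced to a standard parabolic obstacle problem on a smooth domain.

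First I would prove the continuity of $\hat\kappa$ on $(0,T]$. This follows from the strict monotonicity $v_\xi<0$ (Lemma \ref{l2}) together with the inequality $v\ge \tilde K$ (giving a uniform one-sided separation from $1$) and the Hölder continuity of $v$; any jump in $\hat\kappa$ would contradict $v_\xi<0$ combined with the continuity of $v$. The fact that $\hat\kappa(t)\le\hat\eta(t)$ with a positive gap is guaranteed by the $\zeta$-separation established just before the theorem.

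Second, I would extract the improved regularity of $v_t$ needed to invoke the Cannon--Henry--Kotlow criterion. The key point is that inside the coincidence set $\{v=1\}$, $v_t=0$; outside it, $v$ solves a uniformly parabolic equation with smooth coefficients in our strip, so by interior parabolic regularity $v_t$ is continuous up to the free boundary from the non-coincidence side, with a boundary value that matches $0$. Combining this with $L^\infty$-in-time $H^1$-in-space bounds on $u_t$ and $L^2$ bounds on $u_{tt}$ (obtained by differentiating \eqref{problem_evolution1} in $t$, testing against appropriate cut-offs, and using the one-sided bounds of Lemmas \ref{l3} and \ref{l3a}) yields the continuity of $v_t$ across $\hat\kappa(t)$. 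At this stage the hypotheses of \cite{cannon1974continuous} are met and we conclude $\hat\kappa\in C^1((0,T])$.

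Third, I would bootstrap. Once $\hat\kappa\in C^1$, the change of variables $\tilde x=x-\hat\kappa(t)$ transforms the obstacle problem into a fixed-domain initial-boundary value problem for a linear parabolic equation with Cauchy data $v=1$, $v_\xi=0$ on $\tilde x=0$, and smooth coefficients depending on $\hat\kappa$ and its derivatives. Schauder estimates then give $v\in C^{2+\alpha,1+\alpha/2}$ up to the boundary, which, reinserted into the Stefan-type condition $v(\hat\kappa(t),t)=1$ together with $v_\xi(\hat\kappa(t),t)=0$, gives one more derivative for $\hat\kappa$. Iterating yields $\hat\kappa\in C^\infty((0,T])$. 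The main obstacle in the above is the first improvement from $C^0$ to $C^1$: one must carefully justify the continuity of $v_t$ at the free boundary despite the low initial regularity near $t=0$, which is why the argument is carried out on $(t_1,t_2)\subset (0,T]$ and the behaviour at $t=0$ is excluded from the conclusion.
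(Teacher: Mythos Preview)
Your proposal is correct and follows essentially the same route as the paper: localize near $\hat\kappa$ via the change of variables $\xi=\zeta(t)+x$ so that $\sigma\equiv\sigma_L$ and the problem reduces to a standard parabolic obstacle problem, establish $u_t\in L^\infty(t_1,t_2;H^1(a,b))$ and $u_{tt}\in L^2(t_1,t_2;L^2(a,b))$ to get continuity of $v_t$ across the free boundary, invoke the Cannon--Henry--Kotlow result for $C^1$, and then bootstrap via Schauder theory. The only minor slip is your first step: you cannot cite strict inequality $v_\xi<0$ from Lemma~\ref{l2} at the limit (it holds for $v_\e$, and at $\hat\kappa(t)$ one in fact has $v_\xi=0$); the paper instead deduces continuity of $\hat\kappa$ \emph{after} obtaining continuity of $v_t$, which is the more robust order of argument here.
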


Next, we consider the smoothness of the transit free boundary $\hat\eta(t)$. For this purpose, we need the following lemma of the parabolic diffraction problem. The proof is essentially similar to that in \cite{L}; hence, we just give a sketch.

\begin{lemma}\label{prop_smooth}
In the domain $Q=\{a<x<b,\ 0<t<T\}$, where $a<b$ are some constants, consider the following initial boundary problem
\begin{equation}\label{v0}
 \left \{
 \begin{split}
 	& u_t-(K_f(u_{x}+u))_x+f_1(x,t)u_x+f_2(x,t)u=0,\\
 	& u(a,t)=g_a(t),\ u(b,t)=g_b(t),\quad  u(x,0)=\phi(x),\\
& g_a(0)=\phi(a),\quad g_b(0)=\phi(b),
 \end{split}
 \right.
 \end{equation}
where $g_a,g_b\in C^{2}[0,T]$, $K_f(\phi_x+\phi)(x)\in C^{1}[a,b],$  $f_i(x,t)\in C([a,b]\times [0,T]),$  $i=1,2,$  $K_f=\left\{\begin{array}{ll}\mu_1, &\mbox{ if } x>f(t),\\ \mu_2, &\mbox{ if }  x<f(t),\end{array}\right.$, $a<f(t)<b$, for $t\in [0,T]$, $f(t)\in C^{0,1}(0,T)$,$a<f(t)<b$, for $t\in[0,T]$, and $\mu_1,\mu_2$ are positive constants. Then, the problem (\ref{v0}) admits a solution, and
$$
u(f(t)-,t)=u(f(t)+,t),\quad \mu_2(u+u_x)(f(t)-,t)=\mu_1(u+u_x)(f(t)+,t).
$$
Moreover, there exists a positive constant $C$ and $0<\alpha<1$ depend only on the given data such that
$$\|K_f(u+u_x)\|_{C^\alpha(Q)}\leqslant C.$$
\end{lemma}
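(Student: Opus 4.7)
The plan is to reduce the diffraction problem to a fixed-interface transmission problem by a Lipschitz change of variable. Since $f\in C^{0,1}(0,T)$, the substitution $y=x-f(t)$ maps the moving interface $\{x=f(t)\}$ to the fixed line $\{y=0\}$. Writing $\tilde u(y,t)=u(y+f(t),t)$, the equation becomes
\begin{equation*}
\tilde u_t - (K(\tilde u_y+\tilde u))_y + (f_1(y+f(t),t)-f'(t))\tilde u_y + f_2(y+f(t),t)\tilde u = 0,
\end{equation*}
where $K=\mu_1$ for $y>0$ and $K=\mu_2$ for $y<0$; note that $f'\in L^\infty$, so the new drift coefficient is bounded. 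This transfers the whole analysis to a problem whose principal part has a discontinuity only across the straight line $\{y=0\}$, which is the classical parabolic diffraction (transmission) setting.

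Next, I would establish existence via the standard weak/variational formulation in the Hilbert triple $H^1_0(a-f,b-f)\subset L^2 \subset H^{-1}$, lifted to accommodate the inhomogeneous boundary data using the $C^2$ regularity of $g_a,g_b$ and the compatibility $g_a(0)=\phi(a)$, $g_b(0)=\phi(b)$. The bilinear form associated with the principal part $(K w_y,\varphi_y)$ is coercive on $H^1$ (uniformly in time), so the Lions–Magenes theory yields a unique weak solution $\tilde u\in L^2(0,T;H^1)\cap C([0,T];L^2)$ with $\tilde u_t\in L^2(0,T;H^{-1})$. The jump conditions at $y=0$ then come for free: continuity of $\tilde u$ follows from $\tilde u\in H^1$, and testing the divergence-form equation against smooth functions supported across $\{y=0\}$ forces the co-normal flux $K(\tilde u_y+\tilde u)$ to have no distributional jump, i.e.\ $\mu_2(\tilde u+\tilde u_y)(0-,t)=\mu_1(\tilde u+\tilde u_y)(0+,t)$. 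Translating back to $x$ gives the stated transmission relation.

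The main obstacle is the H\"older bound on the flux $K_f(u+u_x)$. The strategy is to follow Ladyzhenskaya–Solonnikov–Ural'tseva's treatment of parabolic diffraction problems: on each half $\{y>0\}$ and $\{y<0\}$ separately, $\tilde u$ solves a uniformly parabolic equation with bounded measurable coefficients, so interior and one-sided boundary De Giorgi–Nash–Moser estimates give $\tilde u\in C^{\alpha,\alpha/2}$ up to $\{y=0\}$ from each side, and $\tilde u_y$ in $L^p$ locally for some $p>2$ via a Caccioppoli–Gehring argument. The crucial point is that the quantity $K(\tilde u_y+\tilde u)$, unlike $\tilde u_y$ alone, is continuous across $\{y=0\}$ by the transmission condition; together with the one-sided H\"older continuity of $\tilde u_y$ inherited from classical Schauder/parabolic $W^{2,1}_p$ estimates applied on each half-domain (using the $C^1$ compatibility $K(\phi_x+\phi)\in C^1[a,b]$ and the $C^2$ regularity of $g_a,g_b$), it lies in $C^\alpha(Q)$ globally with a bound depending only on the data. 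Pulling back the estimate from $y$ to $x$ costs only a Lipschitz factor since $f\in C^{0,1}$, yielding the claimed $\|K_f(u+u_x)\|_{C^\alpha(Q)}\le C$.
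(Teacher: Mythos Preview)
Your overall plan---straighten the moving interface via $y=x-f(t)$ and then invoke parabolic diffraction/transmission theory---is exactly the paper's route. The paper, however, performs one further substitution that you omit: it sets $v=\tilde u\,e^{y}$ (equivalently $v=u\,e^{x-f(t)}$). This absorbs the zeroth-order term sitting inside the divergence, so the principal part becomes the clean $(K_0 v_y)_y$ with $K_0$ piecewise constant across $\{y=0\}$ and only bounded lower-order coefficients remaining. In that form the Ladyzhenskaya--Solonnikov--Ural'tseva estimates for linear divergence-form diffraction problems (together with the argument of \cite{L}) apply verbatim and yield $K_0 v_y\in C^{\alpha}$ directly; since $K_0 v_y=e^{y}K_f(u+u_x)$, the claimed bound follows after multiplying by the smooth factor $e^{-y}$ and undoing the Lipschitz change $y\mapsto x$.

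Without that extra substitution your H\"older step has a genuine gap. You propose to obtain one-sided H\"older continuity of $\tilde u_y$ up to $\{y=0\}$ by applying Schauder or $W^{2,1}_p$ estimates on each half $\{y>0\}$, $\{y<0\}$ separately. But those one-sided estimates require boundary data at $y=0$ of matching regularity, and all you have produced there (via De Giorgi--Nash--Moser) is $\tilde u(0,\cdot)\in C^{\alpha,\alpha/2}$; with merely $C^{\alpha}$ Dirichlet data one does not get $\tilde u_y$ H\"older up to the interface, and you cannot use the conormal value as Neumann data either since its H\"older regularity in $t$ is precisely what is to be proved. The two half-problems are genuinely coupled through the flux condition and cannot be decoupled in this way. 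Either cite the coupled LSU diffraction estimate for the flux $K(\tilde u_y+\tilde u)$ directly, or---more cleanly, and as the paper does---make the substitution $v=\tilde u\,e^{y}$ first so that the standard theory applies without modification.
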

\begin{proof}
Make the transformation $y=x-f(t)$, $v=ue^{y}$, then problem (\ref{v0}) satisfies
\begin{equation}\label{v00}
 \left \{
 \begin{split}
 	& v_t-(K_0(v_y))_y-f'(t)v_y+f_1v_y+(f_2-f_1)v=0,\\
 	& v(a-f,t)=g_a(t)e^{a-f},\ v(b-f,t)=g_b(t)e^{b-f},\quad  v(y,0)=\phi(y+f)e^{y}.
 \end{split}
 \right.
 \end{equation}
where $K_0=\mu_1$ if $y>0$,\; $\mu_2$ if $y<0$.
By well-known estimates for linear parabolic PDEs  with discontinuous coefficients whose principal part is in divergence form (see \cite[Chapter III, 5]{La}), and the  proof of \cite[Theorem 1.1]{L}, the claim of this lemma follows. 
\end{proof}
Now, we are in position to prove the smoothness of $\hat\eta$.
\begin{theorem}\label{thm_trans}
$\hat\eta \in C^{\infty}((0,T])$.
\end{theorem}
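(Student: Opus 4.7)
The plan is to transfer all information to a neighborhood of the transit boundary via the separating curve $\zeta$ introduced earlier, then apply the diffraction lemma (Lemma \ref{prop_smooth}) to upgrade the regularity of the ``flux'' across $\hat\eta$, and finally bootstrap using parabolic Schauder theory after straightening the interface.

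\textbf{Step 1: Localization.} I would fix $t_0\in(0,T]$ and work in a rectangle $Q=(a,b)\times(t_1,t_2)$ with $t_1<t_0<t_2\leqslant T$ and constants $a<b$ chosen so that $\zeta(t)+a\leqslant \hat\kappa(t)$ and $\zeta(t)+b<\tilde\eta^\ast$ are separated from $\hat\kappa$ on the left of $\hat\eta$, and $b>\hat\eta(t)$ on the right but still inside the low-rating region where Lemma \ref{lem_lower_bound} gives a nontrivial lower bound. In this rectangle the only singularity of the leading coefficient $\sigma^2(v)$ lies along the curve $\xi=\hat\eta(t)$ and the obstacle is inactive. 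We already know from Lemma \ref{lem_Lip} (inherited by the limit via Theorem \ref{thm_exist}) that $\hat\eta\in C^{0,1}([t_1,t_2])$.

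\textbf{Step 2: Interface regularity via Lemma \ref{prop_smooth}.} With $f(t)=\hat\eta(t)$, $\mu_1=\sigma_H^2/2$, $\mu_2=\sigma_L^2/2$, and $f_1,f_2$ read off from the rewriting of \eqref{problem_evolution1} in divergence form, the restriction of $v$ to $Q$ satisfies the hypotheses of Lemma \ref{prop_smooth} (taking as Dirichlet data on $\xi=a,b$ the traces of $v$, which are Hölder continuous and satisfy the compatibility condition with the initial value by the inclusion $v\in W^{2,1}_{\infty}$ away from $\{t=0\}$). The lemma yields the transmission relations
\[
 v(\hat\eta(t)-,t)=v(\hat\eta(t)+,t)=\gamma,\qquad \sigma_L^2\bigl(v+v_\xi\bigr)(\hat\eta(t)-,t)=\sigma_H^2\bigl(v+v_\xi\bigr)(\hat\eta(t)+,t),
\]
together with $\sigma^2(v)(v+v_\xi)\in C^{\alpha,\alpha/2}(\overline Q)$ for some $0<\alpha<1$. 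Because $\sigma^2(v)$ is constant on either side of the interface, this gives $v+v_\xi\in C^{\alpha,\alpha/2}$ up to $\hat\eta$ from each side, hence (using that $v$ is Lipschitz in $\xi$ and continuous in $t$) $v_\xi\in C^{\alpha,\alpha/2}$ from each side. From the PDE $v_t=\tfrac12\sigma^2(v)(v_{\xi\xi}+v_\xi)+\delta(v_\xi+v)$ together with the transmission condition above, $v_t$ is then continuous across $\hat\eta$ with the same Hölder regularity from each side.

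\textbf{Step 3: $C^{1,\alpha}$ regularity of $\hat\eta$.} The identity $v(\hat\eta(t),t)=\gamma$ is inherited as a limit of $v_\e(\eta_\e(t),t)=\gamma$. Since $v_\xi(\hat\eta(t),t)\leqslant -c<0$ (this strict bound is inherited from the uniform estimate obtained in the proof of Lemma \ref{lem_Lip}), I can differentiate the level-set relation along $\hat\eta$ to obtain
\[
 \hat\eta'(t)= -\frac{v_t(\hat\eta(t),t)}{v_\xi(\hat\eta(t),t)}\,.
\]
With $v_t,v_\xi\in C^{\alpha,\alpha/2}$ on each side and continuous across the interface, the right-hand side is $C^{\alpha/2}$ in $t$, so $\hat\eta\in C^{1,\alpha/2}([t_1,t_2])$.

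\textbf{Step 4: Bootstrap to $C^\infty$.} Now I would straighten the interface by the change of variables $y=\xi-\hat\eta(t)$, $\tilde v(y,t)=v(\hat\eta(t)+y,t)$. On each half-strip $\{y>0\}\cap Q$ and $\{y<0\}\cap Q$, the function $\tilde v$ satisfies a uniformly parabolic equation with constant second-order coefficient and with first-order coefficient containing the extra drift $-\hat\eta'(t)$. With $\hat\eta'\in C^{\alpha/2}$, the resulting coefficients are $C^{\alpha/2}$, so Schauder estimates up to the flat interface $\{y=0\}$ for each one-sided problem, coupled through the transmission condition and the Dirichlet value $\tilde v(0,t)=\gamma$, give $\tilde v\in C^{2+\alpha/2,1+\alpha/4}$ on each side up to $\{y=0\}$. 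Feeding this back into the formula for $\hat\eta'(t)$ upgrades $\hat\eta$ to $C^{2+\alpha/4}$. Iterating this bootstrap (each turn raises the regularity of $\hat\eta$ by one derivative) yields $\hat\eta\in C^k$ for every $k$, hence $\hat\eta\in C^\infty((0,T])$.

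\textbf{Expected main obstacle.} The delicate point is the very first bootstrap step: one must verify that Lemma \ref{prop_smooth} applies with the Lipschitz free boundary $\hat\eta$ (not a prescribed smoother curve), that the right-hand traces $g_a,g_b$ have the required regularity, and that the strict non-degeneracy $v_\xi(\hat\eta(t),t)\leqslant -c<0$ passes to the limit $\e\to 0$ uniformly on compact time intervals $[t_1,t_2]\subset(0,T]$. Once $\hat\eta\in C^{1,\alpha/2}$ is secured, the Schauder-type bootstrap is classical.
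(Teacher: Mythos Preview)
Your Step 2 contains a genuine error. You apply Lemma \ref{prop_smooth} directly to $v$ and claim it yields the transmission relation $\sigma_L^2(v+v_\xi)(\hat\eta(t)-,t)=\sigma_H^2(v+v_\xi)(\hat\eta(t)+,t)$. But $v$ does \emph{not} satisfy this relation: since $v\in W^{2,1}_\infty$ away from $t=0$, the actual interface conditions inherited from the approximation (see \eqref{FBP}) are $v(\hat\eta-)=v(\hat\eta+)=\gamma$ and $v_\xi(\hat\eta-)=v_\xi(\hat\eta+)$, so $(v+v_\xi)$ is \emph{continuous} across $\hat\eta$ while $\sigma^2$ jumps. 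Hence the ``flux'' $\frac12\sigma^2(v)(v+v_\xi)$ is discontinuous at $\hat\eta$, and $v$ is \emph{not} a weak solution of the divergence-form diffraction problem in Lemma \ref{prop_smooth}. The lemma produces a function with the weighted flux-jump condition, which is a different object from $v$; nothing about $v$ can be read off from it. Consequently your derivation of $v_t\in C^{\alpha,\alpha/2}$ up to the interface, and the first bootstrap step, break down.

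The paper circumvents this by first differentiating: setting $w=v_\xi$, the continuity of $v_\xi$ across $\hat\eta$ together with the level-set relation $v(\hat\eta(t),t)=\gamma$ forces $v_t(\hat\eta-)=v_t(\hat\eta+)$; plugging this into the PDE on each side gives exactly $\sigma_L^2(w_\xi+w)(\hat\eta-)=\sigma_H^2(w_\xi+w)(\hat\eta+)$. Thus $w$, not $v$, satisfies a genuine Verigin-type diffraction problem to which Lemma \ref{prop_smooth} applies, yielding $\sigma^2(w_\xi+w)\in C^\alpha$ up to $\hat\eta$. The free boundary speed is then expressed through a Stefan-type condition in $w$ and $w_\xi$ (equation \eqref{etaup}), whose right-hand side is $C^\alpha$ because $w=v_\xi$ is bounded away from zero near $\hat\eta$ (Lemma \ref{l2a} and the argument of Lemma \ref{lem_Lip}). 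This gives $\hat\eta\in C^{1+\alpha}$, and the bootstrap proceeds from there. The missing idea in your proposal is precisely this passage to $w=v_\xi$.
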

\begin{proof}
In a neighborhood of $\hat\eta$, $v$ satisfies the system
 \begin{equation}\label{v1}
 \left \{
 \begin{split}
 	&-\frac{\pr v}{\pr t}
+\frac12 \s^2_{H}\Big(\frac{\pr^2v}{\pr \xi^2} + \frac{\pr v}{\pr \xi}\Big)+\delta\Big(\frac{\pr v}{\pr \xi}
+v\Big)=0,  \quad \xi>\hat\eta(t),\\
 	&-\frac{\pr v}{\pr t}
+\frac12 \s^2_{L}\Big(\frac{\pr^2v}{\pr \xi^2} + \frac{\pr v}{\pr \xi}\Big)+\delta\Big(\frac{\pr v}{\pr \xi}
+v\Big)=0,   \quad \xi<\hat \eta(t),\\
 	&v(\hat\eta(t)+,t)=v(\hat\eta(t)-,t)=\gamma,\quad v_\xi(\hat\eta(t)+,t)=v_\xi(\hat\eta(t)-,t).
 \end{split}
 \right.
 \end{equation}
Thus, it holds that 

\begin{equation}\label{f1}
\hat\eta'(t)=-\frac{v_t(\hat\eta(t)+,t)}{v_\xi(\hat\eta(t)+,t)}=-\frac{v_t(\hat\eta(t)-,t)}{v_\xi(\hat\eta(t)-,t)},
\end{equation}
which means that
\begin{equation}\label{f2}
v_t(\hat\eta(t)+,t)=v_t(\hat\eta(t)-,t).
\end{equation}
Set $w=v_\xi$. From (\ref{v1}) and \eqref{f2}, it turns out that $w$ verifies the system
\begin{equation}\label{v2-2}
 \left \{
 \begin{split}
 	&-\frac{\pr w}{\pr t}
+\frac12 \s^2_{H}\Big(\frac{\pr^2w}{\pr \xi^2} + \frac{\pr w}{\pr \xi}\Big)+\delta\Big(\frac{\pr w}{\pr \xi}
+w\Big)=0,  \quad \xi>\hat\eta(t),\\
 	&-\frac{\pr w}{\pr t}
+\frac12 \s^2_{L}\Big(\frac{\pr^2w}{\pr \xi^2} + \frac{\pr w}{\pr \xi}\Big)+\delta\Big(\frac{\pr w}{\pr \xi}
+w\Big)=0,   \quad \xi<\hat \eta(t),\\
 	&w(\hat\eta(t)+,t)=w(\hat\eta(t)-,t), \quad \sigma^2_L(w_x+w)(\hat\eta(t)+,t)=\sigma^2_H(w_x+w)(\hat\eta(t)-,t).
 \end{split}
 \right.
 \end{equation}
According to the free boundary condition, $w$ satisfies a typical Verigin problem, see \cite{L,Y}. In particular, the $C^\infty$ regularity of the free boundary was proved in  \cite{L}. Therefore, we may obtain the same result for our problem in a similar manner. To see this, note that  the free boundary $\hat\eta$  is Lipschitz continuous and  satisfies
\begin{eqnarray}\nonumber
\hat\eta'(t)&=&-\frac{\frac12 \s^2_{H}\Big(w_\xi(\hat\eta(t)+,t)+ w(\hat\eta(t)+,t)\Big)+\delta\Big(w(\hat\eta(t)+,t)+\gamma
\Big)}{w(\hat\eta(t)+,t)}\\
&=&-\frac{\frac12 \s^2_{L}\Big(w_\xi(\hat\eta(t)-,t)+ w(\hat\eta(t)-,t)\Big)+\delta\Big(w(\hat\eta(t)-,t)+\gamma
\Big)}{w(\hat\eta(t)-,t)}, \label{etaup}
\end{eqnarray}
which is a kind of Stefan condition, see \cite{J1,J2, F1} for references. Applying Lemma \ref{prop_smooth} to problem (\ref{v2-2}) (up to some simple transformation), $w_\xi+w\in C^{\alpha}$ up to the free boundary. Furthermore, by Lemma \ref{l2a}, $w$ has a negative upperbound. Then, the right hand side of (\ref{etaup}) belongs to $C^{\alpha}$. This implies in turn $\hat\eta\in C^{1+\alpha}$. In this way, by an iteration process, one can further improve the regularity of $\hat \eta$ and shows eventually that it belongs to $C^\infty.$
\end{proof}

\section{Asymptotic Convergence}
In this section, we will prove that $v$ converges to the traveling wave solution as $t$ goes to $+\infty$. Since $\frac{\pr v}{\pr t}$ is non-positive, we see that, for any $t$, 
$$
0 \geqslant \int_0^t \frac{\pr v}{\pr t}(\xi,s)ds=v(\xi,t)-v(\xi,0) \geqslant \tilde K(\xi)-v(\xi,0).
$$
Note that for $\xi <\tilde \kappa^*$, $v(\xi,0)=\tilde K(\xi)=1$ and $\tilde K(\xi),v(\xi,0)\leqslant e^{-\xi}$ which implies the integrability of $\tilde K-v(\cdot,0)$ over $\mathbb R$. Thus, we have that 
$$
0\geqslant \int_{-\infty}^{\infty}\int_0^t \frac{\pr v}{\pr t}(\xi,s)dsd\xi \geqslant \int_{-\infty}^{\infty}( \tilde K(\xi)-v(\xi,0)) d\xi.
$$
Letting $t$ tend to infinity, we get that there exists a constant $C>0$ such that 
\begin{equation}\label{inequ_energy_t}
0\geqslant \int_{-\infty}^{\infty} \int_0^\infty\frac{\pr v}{\pr t}(\xi,s)dsd\xi \geqslant -C.
\end{equation}

Now let $v^n(\xi,t):=v(\xi,t+n)$ and consider $v^n$ as a sequence of functions defined on $\mathbb R \times[0,1]$. Lemmas \ref{l1a}-\ref{l4} indicate that it is a bounded sequence in $W^{2,1}_\infty(\mathbb R \times [0,1])$. As in the proof of Theorem \ref{thm_exist}, via a standard diagonal extraction procedure there exists a function $\bar K$ and a subsequence $n_j$ such that  such that $v_{n_j}$ and $\frac{\partial}{\partial \xi}v_{n_j}$ converge respectively to $\bar K$ and $\frac{\partial}{\partial \xi}\bar K$ almost everywhere in $\mathbb{R} \times [0,1]$. After a new extraction if necessary, 
$$
\frac{\pr v^{n_j}}{\pr t} \to \frac{\pr \bar K}{\pr t}, \quad \frac{\pr^2 v^{n_j}}{\pr \xi^2} \to \frac{\pr^2 \bar K}{\pr \xi^2} \;\text{ in } L^{\infty}(\mathbb R \times [0,1]) \;\mbox{weak-$*$},
$$
Since non-positivity is preserved under weak-$*$ convergence and  $\frac{\pr v}{\pr t} \leqslant 0$, one can deduce that $\frac{\pr \bar K}{\pr t} \leqslant 0$. Since \eqref{inequ_energy_t} implies that $\int_0^1 \int_{-\infty}^{\infty} v^{n_j}(\xi,t)d\xi dt= \int_{n_j}^{n_j+1} \int_{-\infty}^{\infty} v(\xi,t)d\xi dt \rightarrow 0$ as $n_j \rightarrow 0$, we have that $\int_0^1 \int_{-\infty}^{\infty} \frac{\pr \bar K}{\pr t} d \xi dt=0$. Combining with the non-positivity of $\frac{\pr \bar K}{\pr t}$, it follows that $\frac{\pr \bar K}{\pr t} \equiv 0$ which means that $\bar K$ is only a function of $\xi$. Then, the following properties pass from $v$ to $\bar K$,
  $$
 \tilde K \leqslant \bar K \leqslant \min\{1,e^{-\xi} \},\;\, \frac{d \bar K}{d \xi} \leqslant 0, \, \text{ and }\,  \frac{d^2 \bar K}{d \xi^2}+ \frac{d \bar K}{d \xi} \leqslant 0.
  $$
  
  Since $\hat \eta(\cdot)$ and $\hat \kappa(\cdot)$ are also non-increasing with respect to $t$,  they also admit limits at $\infty$, which are denoted as $\bar \eta$ and $\bar  \kappa$ respectively. Then, one can verify that $\bar K(\bar \eta)=\gamma$ and $\bar K(\bar \kappa)=1$. For any interval $I$ such that $\bar I \subset (\bar \kappa,\bar \eta)$, there exists $T$ such that $\bar I \subset (\hat \kappa(t),\hat \eta(t))$ for any $t>T$. In $I$, it holds that 
  $$
  -\frac{\pr v^n}{\pr t}+\frac{1}{2}\sigma_L^2\left(\frac{\pr^2 v^n}{\pr \xi^2}+\frac{\pr v^n}{\pr \xi}  \right)+\delta\left(\frac{\pr v^n}{\pr \xi}+v^n  \right)=0.
  $$
  Taking subsequence $n_j$, we derive that 
  $$
  \frac{1}{2}\sigma_L^2\left(\frac{d^2 \bar K}{d \xi^2}+\frac{d \bar K}{d \xi}  \right)+\delta\left(\frac{d \bar K}{d \xi}+\bar K  \right)=0, \text{ for } \xi \in I.
  $$
  Since $I$ is arbitrary, it holds that 
  $$
  \frac{1}{2}\sigma_L^2\left(\frac{d^2 \bar K}{d \xi^2}+\frac{d \bar K}{d \xi}  \right)+\delta\left(\frac{d \bar K}{d \xi}+\bar K  \right)=0, \text{ for } \bar \kappa<\xi<\bar \eta .
  $$
  Similarly, we can also show that 
  $$
   \frac{1}{2}\sigma_H^2\left(\frac{d^2 \bar K}{d \xi^2}+\frac{d \bar K}{d \xi}  \right)+\delta\left(\frac{d \bar K}{d \xi}+\bar K  \right)=0, \text{ for } \xi>\bar \eta .
  $$
  Note that $\tilde K \leqslant \bar K \leqslant \min\{1,e^{-\xi} \}$ implies that $\lim_{\xi \rightarrow \infty} e^{\xi}\bar K(\xi)=1$. Combining with the fact that $\bar K \in C^{1+\alpha}$, we see that it is a solution to \eqref{steady1}, i.e.
  \begin{equation*}
  	\left \{
  	\begin{split}
  		&\frac{d^2 \bar K}{d \xi^2}+ \frac{d\bar K}{d \xi}+c_H(\frac{d\bar K}{d \xi} +K)=0,  \xi>\bar \eta,\\
  		& \frac{d^2 \bar K}{d \xi^2}+\frac{d \bar K}{d \xi}+c_L(\frac{d\bar K}{d \xi} +K)=0,  \bar \kappa <\xi<\bar \eta,\\
  		&\bar K(\bar \kappa)=1,\frac{d\bar K}{d \xi}(\bar \kappa)=0,\\
  		&\bar K(\bar \eta)=\bar K(\bar \eta^*-)=\gamma, \frac{d\bar K}{d \xi}(\bar \eta+)=\frac{d\bar K}{d \xi}(\bar \eta-),\\
  		&\lim_{\xi \rightarrow \infty} e^{\xi}\bar K(\xi)=1.
  	\end{split}
  	\right.
  \end{equation*}
Then, interior estimate implies that $\bar K$ is smooth in $(\bar \kappa,\bar \eta)$ and $(\bar \eta,\infty)$. Now, from the uniqueness of the solution, we derive that $\bar K=K$. Since any sub-sequential limit must be same, the full sequence must converge as $n$ goes to $\infty$. We have proved the local convergence of $v$. But, noting that $v(\xi,t)\equiv1$ for $\xi<\tilde \kappa^*$ and $v(\xi,t )\leqslant e^{-\xi}$, the convergence is also uniform over $\mathbb R$. Finally, we prove the following result.
\begin{theorem}
As $t$ goes to $+\infty$, $v(\cdot,t)$ converges uniformly to $K$. 
\end{theorem}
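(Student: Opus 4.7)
The plan is to exploit the monotonicity of $v$ in $t$ together with the uniform parabolic estimates already in hand to extract a limit along time translations, identify it as the unique traveling wave, and then upgrade to uniform convergence using the behavior at $\xi\to\pm\infty$.

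First I would establish a global integrability estimate for $\frac{\pr v}{\pr t}$. Since $\frac{\pr v}{\pr t}\leqslant 0$ by Lemma \ref{l3} and $\tilde K\leqslant v(\xi,t)\leqslant \min(1,e^{-\xi})$ with $\tilde K-v(\cdot,0)$ integrable over $\mathbb R$, we obtain
\begin{equation*}
0\geqslant \int_{-\infty}^{+\infty}\int_0^{+\infty}\frac{\pr v}{\pr t}(\xi,s)\,ds\,d\xi \geqslant -C.
\end{equation*}
Next, I would introduce the translates $v^n(\xi,t):=v(\xi,t+n)$ on $\mathbb R\times[0,1]$. By Lemmas \ref{l1a}-\ref{l4} these are bounded in $W^{2,1}_\infty(\mathbb R\times[0,1])$, so the diagonal extraction argument used in the proof of Theorem \ref{thm_exist} produces a subsequence $v^{n_j}\to\bar K$ a.e., with $\frac{\pr v^{n_j}}{\pr \xi}\to \frac{\pr \bar K}{\pr \xi}$ a.e. and with $\frac{\pr v^{n_j}}{\pr t}$, $\frac{\pr^2 v^{n_j}}{\pr \xi^2}$ converging weak-$*$ in $L^\infty$.

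The key step is to show $\bar K$ is stationary. Non-positivity passes to weak-$*$ limits, so $\frac{\pr \bar K}{\pr t}\leqslant 0$. On the other hand, the integral bound above gives $\int_0^1\!\!\int_{\mathbb R}\frac{\pr v^{n_j}}{\pr t}\,d\xi\,dt\to 0$, hence $\int_0^1\!\!\int_{\mathbb R}\frac{\pr \bar K}{\pr t}\,d\xi\,dt=0$, forcing $\frac{\pr \bar K}{\pr t}\equiv 0$. Since the monotone free boundaries $\hat\eta(t)$ and $\hat\kappa(t)$ admit limits $\bar\eta$ and $\bar\kappa$ as $t\to+\infty$, one passes to the limit in the PDE on any compact sub-interval of $(\bar\kappa,\bar\eta)$ or $(\bar\eta,+\infty)$ to see that $\bar K$ satisfies the ODE system with coefficients $c_L$ and $c_H$ respectively. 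The $C^{1+\alpha}$ matching across $\bar\eta$ follows from the regularity gained in Theorem \ref{thm_trans}, the pinning $\bar K(\bar\kappa)=1$, $\bar K(\bar\eta)=\gamma$ are preserved in the limit, and the sandwich $\tilde K\leqslant \bar K\leqslant e^{-\xi}$ yields the growth condition $\lim_{\xi\to+\infty}e^\xi \bar K(\xi)=1$. Thus $\bar K$ solves \eqref{steady1}, and by the uniqueness proved in Theorem \ref{thm_TW}, $\bar K = K$. Since the limit is the same along every subsequence, the full sequence $v(\cdot,\cdot+n)$ converges to $K$ locally.

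The main obstacle I expect is promoting this local convergence to uniform convergence over all of $\mathbb R$ and over continuous time $t\to+\infty$ (not just integer $n$). Uniformity in $\xi$ is addressed by the tail behavior: for $\xi\leqslant \tilde\kappa^*$ both $v(\xi,t)$ and $K(\xi)$ equal $1$, while for $\xi$ large both are dominated by $e^{-\xi}$ which is small; together with local uniform convergence on the compact middle region (via Arzel\`a-Ascoli applied to the uniformly H\"older family $\{v^{n_j}\}$), this gives uniform convergence in $\xi$. Finally, the monotonicity $\frac{\pr v}{\pr t}\leqslant 0$ combined with $v\geqslant K$ (a consequence of $v\geqslant \tilde K$ upgraded by the same monotone limit argument, or directly from the lower bound lemma) sandwiches $v(\cdot,t)$ between $v(\cdot,n)$ and $v(\cdot,n+1)$ for $n\leqslant t\leqslant n+1$, so uniform convergence at integer times transfers to $t\to+\infty$.
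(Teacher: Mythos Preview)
Your proposal is correct and follows essentially the same route as the paper: the integrability bound on $\frac{\pr v}{\pr t}$ from the sandwich $\tilde K\leqslant v\leqslant \min(1,e^{-\xi})$, the time-translate sequence $v^n$ bounded in $W^{2,1}_\infty$, extraction of a subsequential limit $\bar K$, the argument that $\frac{\pr \bar K}{\pr t}\equiv 0$ via weak-$*$ preservation of sign plus the vanishing integral, passage to the limit in the ODE on compact subintervals away from $\bar\kappa,\bar\eta$, identification $\bar K=K$ by the uniqueness in Theorem~\ref{thm_TW}, and the upgrade to uniform convergence via the tail behavior $v\equiv 1$ for $\xi\leqslant\tilde\kappa^*$ and $v\leqslant e^{-\xi}$. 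Your added remark on passing from integer $n$ to continuous $t$ by the monotone sandwich $v(\cdot,n+1)\leqslant v(\cdot,t)\leqslant v(\cdot,n)$ makes explicit a step the paper leaves implicit (it works with $v^n$ on $\mathbb R\times[0,1]$ so the limit being independent of $t\in[0,1]$ already covers this); note that this sandwich only requires $\frac{\pr v}{\pr t}\leqslant 0$, so the aside about $v\geqslant K$ is unnecessary and not directly available from Lemma~\ref{lem_lower_bound}, which only gives $v\geqslant \tilde K$.
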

\section{Numerical Results}
In this section, we will  give some numerical results for illustration. As $u$ represents the value of the bond, we will come back to \eqref{problem_evolution} instead of \eqref{problem_evolution1} which will give us more clear financial meaning.
\subsection{Numerical Scheme}
As our problem is non-standard, we will introduce the numerical scheme first. To solve the free boundary problem, we use an explicit-implicit finite difference scheme combined with Newton iteration to solve the penalized equation. The first step is to discretize the equation. Let $t_i=i\Delta t,i=0,1,...,M$, and $\xi_j=j\Delta \xi,j=0,\rm 1,\pm2,...,\pm N$. $U_{i,j}$ will be the approximation of the solution $u$ of \eqref{problem_evolution}  at mesh point $(t_i,\xi_j)$. Consider the approximating penalized equation 
\begin{equation*}
\left\{
\begin{split}
&-\frac{\pr u}{\pr t}+\frac{1}{2}\sigma^2_\e(u,\xi)(\frac{\pr^2 u}{\pr \xi^2}-\frac{\pr u}{\pr \xi})+\delta \frac{\pr u}{\pr \xi}= \e^{-1}(u-e^{\xi})^+,\quad\xi \in [-N\Delta \xi,N\Delta \xi ],t\geqslant 0;\\
&u(\xi,0)=\min\{1,e^{\xi}\};\\
&u(N\Delta \xi,t)=1,u(-N\Delta \xi,t)=0.
\end{split}
\right.
\end{equation*}
 Here $\sigma_\e(u,\xi)=\sigma_H+(\sigma_L-\s_H)H_\e(u-\gamma e^{\xi})$ with $H_\e$ be a proper smooth function. For numerical convenience, we use the penalty function $\e^{-1}(u-e^\xi)^+$. In the numerical experiment, we choose 
 $$
 H_\e(z)=\left\{ 
 \begin{split}
 &0, z\leqslant -\e;\\
 &6\e^{-5}z^5+15 e^{-4}z^4+10 \e^{-3}z^3+1, -\e<z<0;\\
 &1,z\geqslant 0,
 \end{split}
 \right.
 $$
 as proposed in \cite{li2018convergence}. Note that the left hand side is a nonlinear operator since  coefficients depend on $u$. In the  numerical implement, we determine these coefficients with function value from previous time step. For illustration, let us perform discretization  at $(t_i,\xi_j)$. Denote by $\sigma_{i,j}:=\sigma_\e(U_{i,j},\xi_j)$. The first order term is discretized by the upwind scheme, i.e.
 $$
 (\delta-\sigma_{i-1,j}) \frac{\pr u}{\pr \xi}(t_i,\xi_j)\approx\left\{
\begin{split}
&(\delta-\sigma_{i-1,j}) \frac{U_{i,j+1},U_{i,j}}{\Delta \xi},\text{ if $\delta-\sigma_{i-1,j} \geqslant 0 $;}\\
&(\delta-\sigma_{i-1,j}) \frac{U_{i,j},U_{i,j-1}}{\Delta \xi},\text{ if $\delta-\sigma_{i-1,j} < 0 $.}
\end{split} 
 \right.
 $$
 We use the fully implicit approximation to the temporal term 
 $$
 \frac{\pr u}{\pr t}(t_i,\xi_j) \approx \frac{U_{i,j}-U_{i-1,j}}{\Delta t},
 $$ 
 and 
 the usual discretization for the second order term 
 $$
 \frac{\pr^2 u}{\pr \xi^2} \approx \frac{U_{i,j+1}+U_{i,j-1}-2U_{i,j}}{(\Delta \xi)^2}.
 $$
 Thus, given function value $U_{i-1,\cdot}$ at previous time step, current value $U_{i,\cdot}$ is obtained by solving the following equation 
 \begin{equation}\label{FD_nonlinear}
 [A_i U_{i,\cdot}]_j=\e^{-1}(U_{i,j}-e^{\xi_j})^+
 \end{equation}
 for $j=0,\pm 1,\pm 2,...,\pm N$. Here the matrix $A_i$ is determined by $U_{i-1,\cdot}$ and is a sparse $M$-matrix due to our discretization scheme. 
 
 Now, we have to solve the nonlinear equation \eqref{FD_nonlinear}.  We adopt the method used by \cite{forsyth2002quadratic} to value American options. For illustration, let us recall the classical Newton iteration for finding the root of a convex function $f$. Given an initial guess, the point is updated as 
 $$
 z_{n+1}=z_n-\frac{f(z_n)}{f'(z_n)}
 $$
 which is equivalent to say that $z_{n+1}$ solves
 $$
 f(z_n)+f'(z_n)(z-z_n)=0.
 $$
 It is easy to see that the left hand side of above equation is an first order approximation of $f$ at $z_n$. Similarly, we can solve \eqref{FD_nonlinear} with Newton iteration. Denote $U^k_{i,j}$ as the approximation at $(t_i,\xi_j)$ for $k$th iteration. Then, $U^k_{i,\cdot}$ solves the linearized equation
 \begin{equation}\label{FD_linearized}
 	 [A_i U^k_{i,\cdot}]_j=\e^{-1}(U^{k-1}_{i,j}-e^{\xi_j})^++\e^{-1}1_{\{ U^{k-1}_{i,j}-e^{\xi_j}>0 \}}(U^{k}_{i,j}-U^{k-1}_{i,j}).
 \end{equation}
When the difference between $U^k_{i,\cdot}$ and $U^{k-1}_{i,\cdot}$ is small enough, we stop the iteration and set $U_{i,\cdot}$ equals $U^k_{i,\cdot}$. Moreover, the initial guess $U^0_{i,\cdot}$ is chosen to be $U_{i-1,\cdot}$. 

In summary, we have the following iterative algorithm.
\begin{breakablealgorithm} \caption{Explicit-Implicit Finite-difference Iterative Algorithm}
	\begin{algorithmic} 	
		\Require{$N,M,L,\Delta t,\Delta \xi$, smooth function $H_\e(\cdot)$ and tolerance $tol$}
		\State{Initialize $U_{0,j}=\min\{1,e^{\xi_j}\}$}
		\For{ $i=1,2,...,M$}
		\State{Construct the matrix $A_{i}$ according to upwind scheme with 
			$$
			\sigma_{i,j}:=\sigma_\e(U_{i,j},\xi_j)
			$$
		}
		\State{Set $U^0_{i,\cdot}=U_{i-1,\cdot}$}
		\While{True}
		\State{Solve
			\begin{equation*}
				 [A_i U^k_{i,\cdot}]_j=\e^{-1}(U^{k-1}_{i,j}-e^{\xi_j})^++\e^{-1}1_{\{ U^{k-1}_{i,j}-e^{\xi_j}>0 \}}(U^{k}_{i,j}-U^{k-1}_{i,j}).
			\end{equation*}
		} 
		\State{If $\frac{\|U^{k}_{i,\cdot}-U^{k-1}_{i,\cdot}\|_{\infty}}{\max\{1,\|U^{k-1}_{i,\cdot}\|_{\infty}\}}<tol$, Quit}
		\EndWhile
		\State{Set $U_{i,\cdot}=U^k_{i,\cdot}$.}
	    \EndFor
	\end{algorithmic}
\end{breakablealgorithm}
\subsection{Numerical Results} 
In the numerical experiment, we set the model parameters as $\delta=0.03,\sigma_l=0.3,\sigma_h=0.2$ and $\gamma=0.6$. For discretization, we have $\Delta t=0.01,\Delta \xi=0.001$ and $N=10^3$. We also choose $\e=10^{-8}$, $tol=10^{-4}$.  Having numerically solved \eqref{eq_diff}, we are able to plot the traveling equation for \eqref{problem_evolution}, which is  $e^{\xi}K(\xi)$. 
\begin{figure}[H]
	\centering
	\includegraphics[width=0.60\textwidth]{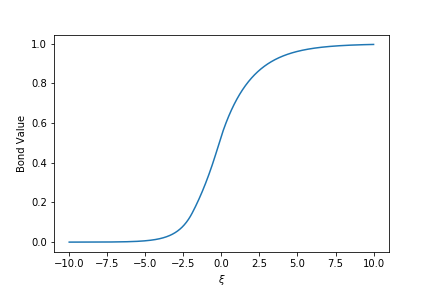}
	\caption{Typical traveling wave equation }
	\label{TW}
\end{figure}
Next, we plot the numerical solution for \eqref{problem_evolution} and compared it with the traveling wave equation in Figure \ref{Sols}.
\begin{figure}[H]
	\centering
	\includegraphics[width=0.60\textwidth]{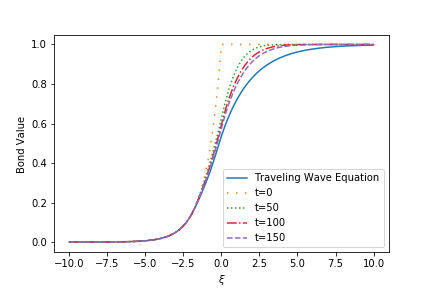}
	\caption{Solutions of the free boundary problem  at time $t=0,50,100,150$. }
	\label{Sols}
\end{figure}
 \noindent It seems  that the solution will converge to the traveling wave equation as $t$ goes to infinity as the theoretical result indicates. To numerically check this, we compute the solution for large time $t$ and plot the error between the solution and the traveling wave equation. The result is shown in Figure \ref{Error}. The error is defined as the supreme norm between the traveling wave equation $K$ and the value function at time $t$. We see that the error is monotone decreasing with respect to $t$. The final error is about $3.6\times10^{-3}$ at time $t=1500$. 
\begin{figure}[H]
	\centering
	\includegraphics[width=0.60\textwidth]{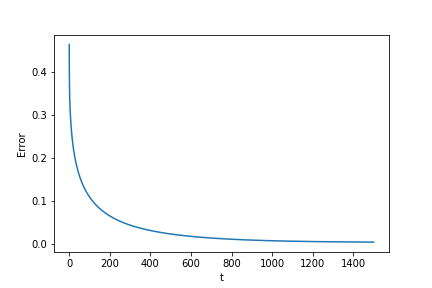}
	\caption{Differences between the free-boundary problem and traveling wave equation. }
	\label{Error}
\end{figure}
Finally, we plot the default and transit boundaries as a function of $t$ and compare them with those of traveling wave equation. The result is shown in Figure \ref{Boundaries}. It is clear that the boundaries are decreasing with respect to $t$ which is consistent with our previous theoretical analysis. We also see  the convergence of two boundaries.  
\Copy{r1major2}{
\begin{figure}[H]
	\centering
	\includegraphics[width=0.60\textwidth]{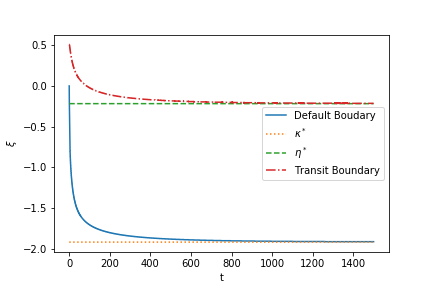}
	\caption{Default and transit boundaries as a function of $t$ }
	\label{Boundaries}
\end{figure}
}
\bibliographystyle{elsarticle-num}

\begin{thebibliography}{10}
\bibitem{ABLZ20}
\newblock
D. Addona, C.-M. Brauner, L. Lorenzi and W. Zhang,
\newblock
\emph{Instabilities in a combustion model with two free interfaces},
\newblock
J. Differ. Equ. 268 (2020), 396--4016.
	
\bibitem{BC} F. Black and  J. Cox, 
{\em Some Effects of Bond Indenture Provisions}, Journal of Finance, 1976, 31:351-367.

\bibitem{BDLL22}
C.-M. Brauner, Y. Dong, J. Liang and L. Lorenzi,
\newblock
\emph{Stability of a free boundary problem arising from credit rating migration problem},
\newblock
in preparation. 

\bibitem{BHL00}
C.-M. Brauner, J. Hulshof and A. Lunardi,
\newblock
\emph{A general approach to stability in free boundary problems},
\newblock
J. Differential Equations \textbf{164} (2000), 16-48. 

\bibitem{BLZ20}
C.-M. Brauner, L. Lorenzi and M. Zhang,
\newblock
\emph{Stability analysis and Hopf bifurcation for large Lewis number in a combustion model with free interface},
\newblock
Ann. Inst. H. Poincar\'e Anal. Non Lin\'eaire \textbf{37} (2020), 581-604.


\bibitem{BV} E. Briys  and F. de Varenne, 
{\em Valuing Risky Fixed Rate Debt: An Extension}, Journal of Financial and Quantitative Analysis, 1997, 32: 239-249.
\bibitem{cannon1974continuous}
J. Cannon, D. Henry and D. Kotlow
{\em Continuous differentiability of the free boundary for weak solutions of the Stefan problem},
\newblock Bulletin of the American Mathematical Society \textbf{80} (1974),45-48.
\bibitem{CHLY} X. F. Chen, B. Hu, J. Liang, and H. M. Yin,
\newblock
\emph {The free boundary problem for measuring credit rating migration risks},
\newblock  
Scientia Sinica Mathematica, {\bf 54}, (2024), 1-28
\bibitem{CL} X.F. Chen and J. Liang,
\newblock 
\emph{A Free  Boundary Problem for Corporate Bond Pricing and Credit Rating  under Different Upgrade and Downgrade Thresholds},
\newblock 
SIAM Financial Mathematics. Vol 12 (2021), 941-966

\bibitem{DHK} M. Dai, S. Huang \& J. Keppo, {\em Opaque bank assets and optimal equity capital}, Journal of Economic Dynamics and Control {\bf 100} (2019), 369-394.

\bibitem {DP} A. K. Dixit, S. Pindyck, 
\newblock
\emph{Investment under Uncertainty}, Princeton University Press, 1994.
\bibitem {DS}D. Duffe and  K. J. Singleton,
{\em Modeling Term Structures of Defaultable Bonds,} {The Review of Financial Studies} 1999, 12: 687-720.

\bibitem{forsyth2002quadratic}
P. A. Forsyth and K. R. Vetzal
{\em Quadratic convergence for valuing American options using a penalty method}
\newblock SIAM Journal on Scientific Computing \textbf{23:6}(2002), 2095--2122.

\bibitem{F1} A. Friedman, Variational Principles and Free Boundary Problems, John Wiley and Sons, New York 1982.

\bibitem{F2}  A. Friedman,  Parabolic variational inequalities in one space dimension and smoothness
of the free boundary, J. Funct. Anal. {\bf 18} (1975) 151-176.

\bibitem{F3} A. Friedman, Partial differential equations of parabolic type, Courier Dover Publications, 2008.

\bibitem{FCL} W. Fu, X. F. Chen and J. Liang, {\em Pricing bond under the consideration of variable credit rating}, to appear in {\em Interfaces and Free Boundaries},  2019.

\bibitem{LHJ}  M.G. Garrori, J.L. Menaldi, Green Functions for Second Order Parabolic Integro-Differential Problems, Longman Scientific
\& Technical, New York, 1992.

\bibitem{HLW} B. Hu,  J. Liang and Y. Wu,
{\em A Free Boundary Problem for Corporate Bond with Credit Rating Migration,} J. Math. Anal. Appl. 428 (2015) 896-909.

\bibitem{hu2011blow}
B. Hu, Blow-up theories for semilinear parabolic equations, Springer, 2011.

\bibitem{JLT} R. A. Jarrow, D. Lando and  S. M.Turnbull,
{\em A Markov model for the term structure of credit risk spreads,} Review of Financial studies 10(2) (1997), 481-523.

\bibitem {JT} R. Jarrow and  S. Turnbull,
{\em Pricing Derivatives on Financial Securities Subject to Credit Risk,} {Journal of Finance}  50(1995), 53-86.

\bibitem{J1} L.Jiang, Stefan Problem (II)	
Acta Mathematica Sinica Vol 13,  No.4 (1964) 33-49.

\bibitem{J2} L.Jiang, Existence and Differentiation of the Two-phase Stefan Problem for Quasilinear Parabolic Equations	
Acta Mathematica Sinica Vol 15, No.6 (1965) 749-764.


\bibitem{La}  O.A. Ladyzenskaja, V.A. Solonnikov and N.N. Uralceva, Linear and Quesilinear Equations of Parabolic Type, Nauka Moscow, 1967.

\bibitem {Le} H. Leland,
{\em Corporate debt value,bond covenants,and optimal capital structure,} 
{Journal of Finance} 49 (1994), 1213-1252.

\bibitem {LT}  H. Leland and  K.B. Toft,
{\em Optimal capital structure,endogenous bankruptcy and the term structure of credit spreads,} {Journal of Finance} 51(3) (1996), 987-1019.

\bibitem{li2018convergence}
Y. Li, Z. Zhang and B. Hu
{\em Convergence Rate of an Explicit Finite Difference Scheme for a Credit Rating Migration Problem}
\newblock SIAM Journal on Numerical Analysis \textbf{56(4)}(2018),2430-2460.

\bibitem{L}J. Liang, The One-Dimensional Quasilinear Verigin Problem, J. Partial Differential Equations, 4, No.2 (1991)  74-96.

\bibitem{LWH16}
J. Liang, Y. Wu, B. Hu, {\em Asymptotic traveling wave solution for a credit rating migration problem}, J. Differ. Equations \textbf{261}, 1017-1045.

\bibitem{LYCW} J. Liang,  H. M. Yin, , X. F. Chen and Y. Wu,
{\em On a Corporate Bond Pricing Model with Credit Rating Migration Risks and Stochastic Interest Rate,} Quantitative Finance and Economics,2017, 1(3): 300-319.

\bibitem {LZ} J. Liang  and Z.K. Zeng,
{\em Pricing on Defaultable and Callable bonds with credit rating migration risks under structure framework,} Applied Mathematics A Journal of Chinese Universities(Ser.A),  2015, 61-70.



\bibitem {LS}  F. Longstaff and  E. Schwartz,
{\em A Simple Approach to Valuing Risky Fixed and Floating Rate Debt,}
{Journal of Finance}, 1995, 50: 789-819.






\bibitem{Lunardi96}
A. Lunardi, Analytic Semigroups and Optimal Regularity in Parabolic Problems, Birkh\"{a}user, Basel, 1996.

\bibitem {M}  R.C. Merton,
{\em On the Pricing of Corporate Debt: The Risk Structure of Interest Rates,}
 {Journal of Finance}, 1974, 29:449-470.

\bibitem {Ma} A.A. Markov, 
{\em Rasprostranenie zakona bol'shih chisel na velichiny, zavisyaschie drug ot druga,}
{ Izvestiya Fiziko-matematicheskogo obschestva pri Kazanskom universitete, 2-ya seriya}, 15, 1906, 135-156.

\bibitem{PAO}  C. V. Pao, Nonlinear parabolic and elliptic equations. Springer Science \& Business Media, 2012.

\bibitem{Y} Y. Tao and F. Yi, Classical Verigin Problem as a  Limit Case of Verigin Problem with Surface Tension at Free Boundary, Appl. Math. -JCU, 11B(1996) 307-322.

\bibitem{TF} K. Tsiveriotis and C. Fernandes,
{\em Valuing convertible bonds with credit risk,}
The Journal of Fixed Income 1998 8(2): 95-102.

\bibitem{WL} Y. Wu and J. Liang,
 {\em Free boundaries of credit rating migration in switching macro regions,}
  Mathematical Control and Related Fields 10  (2020) 257-274.

\bibitem{WLH}	Y. Wu , J. Liang, and  B. Hu,
 {\em A free boundary problem for defaultable corporate bond with credit rating migration risk and its asymptotic,} Discrete and Continuous Dynamical System Series B, 25, (2020) 1043-1058.

\bibitem{chengrong2006free}
C. Yang, L. Jiang and B. Bian
{\em Free boundary and American options in a jump-diffusion model}
\newblock European Journal of Applied Mathematics. \textbf{17:1}(2006), 95--127.

\bibitem{YLW} H. M. Yin, J. Liang and Y. Wu,
 {\em On a New Corporate Bond Pricing Model with Potential Credit Rating Change and Stochastic Interest Rate,}  accepted by Journal of Risk and Financial Management, 11(4) (2018), 87.

\end{thebibliography}

\end{document}